\newtheorem{theorem}{Theorem}
\newtheorem{lemma}{Lemma}
\newtheorem{definition}{Definition}
\newtheorem{remark}{Remark}
\newcommand{\differential}{{\rm{d}}}
\newcommand{\tr}{{\rm{trace}}}
\newcommand{\hess}{{\rm{Hess}}}
\newcommand{\dom}{{\rm{dom}}}
\newcommand{\gen}{{\rm{L}}}
\newcommand{\RNum}[1]{\uppercase\expandafter{\romannumeral #1\relax}}
\renewcommand{\qedsymbol}{\hfill\ensuremath{\blacksquare}}
\def\BibTeX{{\rm B\kern-.05em{\sc i\kern-.025em b}\kern-.08em
    T\kern-.1667em\lower.7ex\hbox{E}\kern-.125emX}}
\begin{document}
\title{Set Invariance with Probability One\\for Controlled Diffusion: Score-based Approach}
\author{Wenqing Wang, Alexis M.H. Teter, Murat Arcak, \IEEEmembership{Fellow, IEEE}, Abhishek Halder, \IEEEmembership{Senior Member, IEEE}
\thanks{Wenqing Wang (wqwang@iastate.edu) and Abhishek Halder (ahalder@iastate.edu) are with the Department of Aerospace Engineering, Iowa State University.}
\thanks{Alexis M.H. Teter (amteter@ucsc.edu) is with the Department of Applied Mathematics, University of California, Santa Cruz.}
\thanks{Murat Arcak (arcak@berkeley.edu) is with the Department of Electrical Engineering and Computer Sciences, University of California, Berkeley.}
\thanks{This research was supported in part by NSF award 2111688 and the ARCS Foundation Fellowship.}}

\maketitle

\begin{abstract}
Given a controlled diffusion and a connected, bounded, Lipschitz set, when is it possible to guarantee controlled set invariance with probability one? In this work, we answer this question by deriving the necessary and sufficient conditions for the same in terms of gradients of certain log-likelihoods---a.k.a. score vector fields---for two cases: given finite time horizon and infinite time horizon. The deduced conditions comprise a score-based test that provably certifies or falsifies the existence of Markovian controllers for given controlled set invariance problem data. Our results are constructive in the sense when the problem data passes the proposed test, we characterize all controllers guaranteeing the desired set invariance. When the problem data fails the proposed test, there does not exist a controller that can accomplish the desired set invariance with probability one. The computation in the proposed tests involve solving certain Dirichlet boundary value problems, and in the finite horizon case, can also account for additional constraint of hitting a target subset at the terminal time. We illustrate the results using several semi-analytical and numerical examples. 
\end{abstract}

\begin{IEEEkeywords}
Controlled set invariance, It\^{o} diffusion, Doob's $h$ transform, score, Dirichlet problem.
\end{IEEEkeywords}

\section{Introduction}\label{sec:introduction}
Consider a controlled It\^{o} diffusion process $\bm{x}_{t}^{\bm{u}}\in\mathbb{R}^{n}$ governed by
\begin{align}
\differential\bm{x}_{t}^{\bm{u}} = \left(\bm{f}\left(t,\bm{x}_{t}^{\bm{u}}\right) + \bm{G}\left(t,\bm{x}_{t}^{\bm{u}}\right)\bm{u}\right)\differential t + \bm{\sigma}\left(t,\bm{x}_{t}^{\bm{u}}\right)\differential \bm{w}_t
\label{ControlledSDE}   
\end{align}
over a time interval $\mathcal{I}$, where the control input $\bm{u}\in\mathbb{R}^{m}$, and $\bm{w}_t\in\mathbb{R}^{p}$ denotes the standard Wiener process. We think of $\bm{f},\bm{\sigma}$ in \eqref{ControlledSDE} modeling a noisy prior dynamics
\begin{align}
\differential\bm{x}_{t} = \bm{f}\left(t,\bm{x}_{t}\right)\differential t + \bm{\sigma}\left(t,\bm{x}_{t}\right)\differential \bm{w}_t,
\label{UnconditionedSDE}
\end{align}
and the input matrix $\bm{G}$ modeling the control authority. Let the diffusion tensor 
\begin{align}
\bm{\Sigma}(t,\bm{x}_t):=\bm{\sigma}\left(t,\bm{x}_t\right)\bm{\sigma}^{\top}\left(t,\bm{x}_t\right),
\label{defSigma}    
\end{align}
which is an $n\times n$ positive semidefinite matrix field.

We assume that the drift coefficient $\bm{f}$ and the diffusion coefficient $\bm{\sigma}$ satisfy the following regularity. 
\begin{itemize}
\item[\textbf{A1.}] (\textbf{Non-explosion and Lipschitz coefficients}) There exist constants $c_1,c_2>0$ such that $\forall\bm{x},\bm{y}\in\mathbb{R}^n$, $\forall t\in\mathcal{I}$, 
\begin{align*}
\|\bm{f}(t,\bm{x})\|_2 + \|\bm{\sigma}\left(t,\bm{x}\right)\|_{2} 
&\leq 
c_1\left(1 + \|\bm{x}\|_2\right), \\
\|\bm{f}(t,\bm{x}) - \bm{f}(t,\bm{y})\|_2 
&\leq 
c_2 \|\bm{x}-\bm{y}\|_2.    
\end{align*}
\item[\textbf{A2.}] (\textbf{Uniformly lower bounded diffusion tensor}) There exists a constant $c_3>0$ such that $\forall \bm{x}\in\mathbb{R}^{n}$, $\forall t\in\mathcal{I}$,
\begin{align*}
    \langle \bm{x}, \bm{\Sigma}(t,\bm{x})\bm{x}\rangle 
    \geq 
    c_3 \|\bm{x}\|_2^2.
\end{align*}  
\end{itemize}
Assumption \textbf{A1} ensures \cite[p. 66]{oksendal2013stochastic} the strong solution of \eqref{UnconditionedSDE}, i.e., the existence-uniqueness of the sample path $\bm{x}_t$ that satisfies \eqref{UnconditionedSDE} for some given initial condition. Assumptions \textbf{A1} and \textbf{A2} together ensure \cite[p. 41]{friedman2008partial} that the diffusion process $\bm{x}_t$ admits transition density $p(s,\bm{x},t,\bm{y})$ for all $0\leq s<t < \infty$ that is strictly positive and everywhere continuous. Assumption \textbf{A2} is sometimes referred to as the \emph{uniform ellipticity} condition \cite[p. 364]{karatzas2014brownian} and implies that the matrix field $\bm{\Sigma}$ is positive definite, or equivalently that $\bm{\sigma}^{\top}$ has trivial nullspace: $\mathcal{N}\left(\bm{\sigma^{\top}}\right)=\{\bm{0}\}$.

We make the following assumption on the set $\mathcal{X}\subset\mathbb{R}^{n}$ to be rendered invariant by the controlled diffusion \eqref{ControlledSDE}.

\noindent\textbf{A3.} The set $\mathcal{X}\subset\mathbb{R}^{n}$ is connected, bounded and Lipschitz\footnote{i.e., the connected set $\mathcal{X}$ is open, relatively compact and the boundary $\partial\mathcal{X}$ is the graph of a locally Lipschitz function.}.

We suppose that the initial state $\bm{x}_{0}\in\mathcal{X}$. We are interested in \emph{certifying or falsifying} whether it is possible to design Markovian control 
\begin{align}
\bm{u}:\mathcal{I}\times\mathcal{X}\mapsto\mathbb{R}^{m},
\label{MarkovianControl}
\end{align}
such that the controlled process $\bm{x}_{t}^{\bm{u}}$ stays in $\mathcal{X}$ \emph{with probability one} over the desired time horizon $\mathcal{I}$, i.e., achieves \emph{almost sure (a.s.) set invariance} over $\mathcal{I}$. 

We consider two types of time intervals: 
\begin{itemize}
    \item $\mathcal{I}= [0,T]$ is prescribed for a given $T<\infty$,

    \item $\mathcal{I}= [0,\infty)$.
\end{itemize}
Given the problem data $\left(\bm{f},\bm{G},\bm{\sigma},\mathcal{X},\mathcal{I}\right)$, we deduce a computational procedure or test that provably certifies or falsifies whether controlled set invariance with probability one is possible or not. The proposed test is necessary and sufficient for a.s. set invariance for controlled diffusion.

Our results are \emph{constructive}. When the problem data pass the proposed test, then we not only \emph{certify the existence} of controller $\bm{u}$ achieving the desired set invariance but also \emph{characterize all such controllers}. 

When the problem data fail the proposed test, then the existence of such controller $\bm{u}$ is \emph{falsified}. Falsification means that designing a controller that achieves a.s. set invariance for problem data tuple $\left(\bm{f},\bm{G},\bm{\sigma},\mathcal{X},\mathcal{I}\right)$, is impossible. Such impossibility, then, is not due to any algorithmic conservatism but is in fact a fundamental mathematical limitation imposed by the problem data. 

We will explain how the precise nature of computations required to perform the test differ for $\mathcal{I}=[0,T]$ and for $\mathcal{I}=[0,\infty)$. However, at a high level, computations for both these cases follow a common two step template:\\
\noindent\textbf{step 1.} \emph{compute a score vector field} $\bm{s}_{T}(t,\bm{x})$ when $\mathcal{I}=[0,T]$, and $\bm{s}_{\infty}(t,\bm{x})$ when $\mathcal{I}=[0,\infty)$. This computation uses problem data $\bm{f},\bm{\sigma},\mathcal{X},\mathcal{I}$ but not $\bm{G}$.\\

\vspace*{-0.1in}
\noindent\textbf{step 2.} \emph{characterize the controllers} $\bm{u}$, or the lack thereof, via the solutions of a static linear system of the form 
\begin{align}
\bm{G}\left(t,\bm{x}\right)\bm{u}\left(t,\bm{x}\right)=\begin{cases}
\bm{s}_{T}\left(t,\bm{x}\right) & \text{if}\quad \mathcal{I}=[0,T],\\
\bm{s}_{\infty}(t,\bm{x}) &\text{if}\quad \mathcal{I}=[0,\infty),
\end{cases}
\label{BackoutToControl}
\end{align}
for all $(t,\bm{x})\in\mathcal{I}\times\mathcal{X}$.

We will show that \textbf{step 1} involves solving suitable PDE boundary value problems (BVPs) involving the generator of the uncontrolled dynamics \eqref{UnconditionedSDE}. We will exemplify their solutions both analytically and numerically.

It turns out that the solution of \textbf{step 1} used in \textbf{step 2}, has some structure. In particular, for $\mathcal{I}=[0,T]$, the vector field $\bm{s}_{T}$ depends on $(t,\bm{x})$ and is parameterized by the given $T<\infty$. The corresponding $\bm{u}$ in \eqref{BackoutToControl}, if exists, is a \emph{time-varying state feedback} even when $\bm{G},\bm{\sigma}$ have no explicit dependence on $t$.

In contrast, for $\mathcal{I}=[0,\infty)$, the vector field $\bm{s}_{\infty}$ depends on $(t,\bm{x})$ with some additional structure. This additional structure is such that the controller $\bm{u}$ guaranteeing a.s. infinite horizon set invariance must necessarily be \emph{time-invariant feedback} provided $\bm{G},\bm{\sigma}$ have no explicit dependence on $t$.

We will also see that the two step template can constructively certify or falsify \emph{additional performance specifications beyond set invariance}. For example, in prescribed time $T<\infty$ case, it could be of interest to hit a target or goal set $\mathcal{X}_{T}\subseteq\mathcal{X}$ at the terminal time $t=T$, in addition to rendering the set $\mathcal{X}$ invariant for all $t\in\mathcal{I}=\left[0,T\right]$; see Fig. \ref{fig:introfigure}. This can be handled in the proposed framework by computing the vector field $\bm{s}_{T}(t,\bm{x})$ in \textbf{step 1} with problem data $\left(\bm{f},\bm{\sigma},\mathcal{X},\mathcal{X}_{T},\mathcal{I}\right)$ while keeping \textbf{step 2} unchanged.


\subsection{Motivation and related works}\label{subsec:RelatedWorks}
\begin{figure}[t]
\centering
\includegraphics[width=0.8\linewidth]{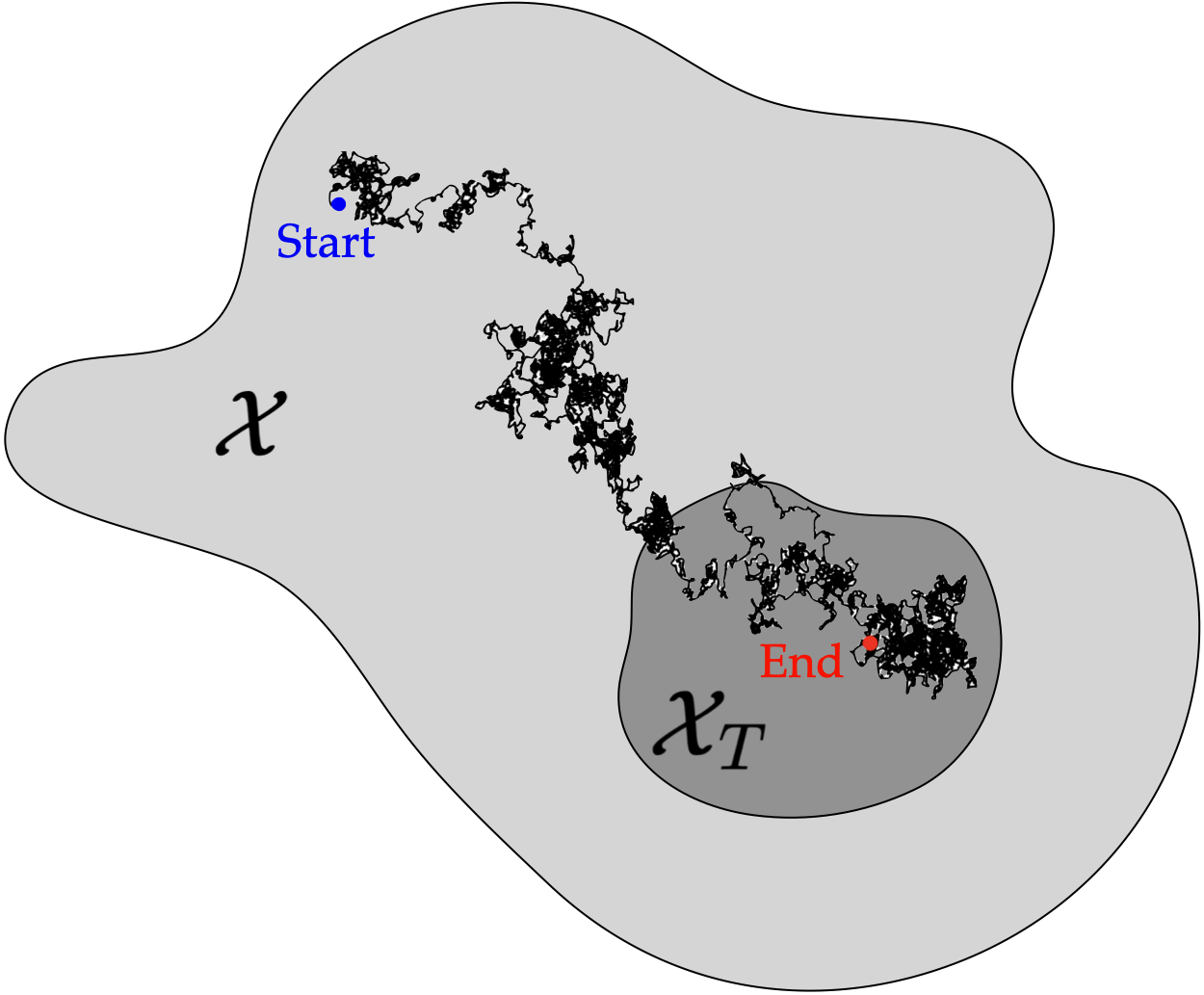}
\caption{Almost sure fixed horizon controlled set invariance with problem data $\left(\bm{f},\bm{\sigma},\mathcal{X},\mathcal{X}_{T},[0,T]\right)$. The problem is to design Markovian control \eqref{MarkovianControl} such that the controlled path governed by \eqref{ControlledSDE} satisfies $\bm{x}_{t}^{\bm{u}}\in\mathcal{X}$ for all $t\in[0,T]$ and $\bm{x}_{T}^{\bm{u}}\in\mathcal{X}_{T}\subseteq\mathcal{X}$ with probability one.}
\label{fig:introfigure}
\end{figure}

The notion of set invariance is motivated by the need to verify safety for controlled dynamics. For instance, in automotive applications \cite{ames2016control,son2019safety,black2023safety,xiao2023safe}, the set $\mathcal{X}$ in Fig. \ref{fig:introfigure} can signify a joint specification of safety bounds in the position (e.g., lateral deviation from the center of the lane, distance from the neighboring vehicle) and velocity (e.g., speed limit).  

The a.s. controlled set invariance, as considered here, is the strongest notion of probabilistic invariance in the stochastic context. Weaker notions such as invariance with high probability may suffice in practice, and have been studied before \cite{esfahani2016stochastic,vinod2021stochastic,keller2024mean,colonius2008near}. Nonetheless, a computational test that can certify or falsify a.s. controlled set invariance for controlled diffusions remains difficult. The intent of this work is to close this gap.

For diffusions, there is a rich literature on the related notion of \emph{viability theory} without control, see e.g., \cite[Ch. 1.5]{aubin2009viability}. For controlled diffusions, invariance conditions are available in terms of stochastic tangent cones \cite{aubin1998viability}, in terms of solutions of a second order Hamilton-Jacobi-Bellman (HJB) PDE \cite{bardi2002geometric, bardi2005almost,quincampoix2005stochastic}, and in terms of the distance-to-set function \cite{da2001stochastic,da2007stochastic}. It is also known \cite{da2004invariance,buckdahn2010another} that the (infinite horizon) a.s. invariance for the set $\mathcal{X}$ w.r.t. \eqref{ControlledSDE} is equivalent to its invariance w.r.t. the following deterministic control system\footnote{In \eqref{equivdetcontrolsystem}, $\bm{\sigma}_{j}$ denotes the $j$th column of the $n\times p$ matrix field $\bm{\sigma}$.} 
\begin{align}
\dot{\bm{x}}_{t}^{\bm{u,\bm{v}}} = &\bm{f}(t,\bm{x}_{t}^{\bm{u,\bm{v}}}) -\frac{1}{2}\sum_{j=1}^{p} \left(\nabla_{\bm{x}_{t}^{\bm{u,\bm{v}}}}\bm{\sigma}_{j}(t,\bm{x}_{t}^{\bm{u,\bm{v}}})\right)\bm{\sigma}_{j}(t,\bm{x}_{t}^{\bm{u,\bm{v}}}) \nonumber\\
&+ \bm{G}(t,\bm{x}_{t}^{\bm{u,\bm{v}}})\bm{u} + \bm{\sigma}(t,\bm{x}_{t}^{\bm{u,\bm{v}}})\bm{v},
\label{equivdetcontrolsystem}    
\end{align}
with additional controls $\bm{v}\in L^{1}_{\mathrm{loc}}\left([0,\infty),\mathbb{R}^{n}\right)$, under the smoothness assumption $\bm{\sigma}(t,\cdot)\in C^{1}_{b}(\mathbb{R}^{n})$ for all $t\geq 0$. In fact, the results in \cite{da2004invariance,buckdahn2010another} are more general in that the diffusion does not need to be affine in $\bm{u}$. However, designing a direct computational test based on these results remain challenging.

Motivated from a computational perspective, \emph{sufficient} conditions are available in terms of the {stochastic control barrier functions} (CBFs) for a.s. set invariance \cite{clark2021control,tamba2021notion,nishimura2024control}, and for quantifying the probability that a system exits a given safe set \cite{santoyo2021barrier,wang2021safety,xue2023reach,prajna2007framework}. In general, stochastic CBFs can be computationally intensive and may suffer from additional conservatism depending on their parameterization (e.g., sum-of-squares polynomial \cite{prajna2007framework,steinhardt2012finite}, neural network \cite{mathiesen2022safety}). Moreover, since the formulation provides only sufficient conditions, an infeasible search within a parameterization class yields an inconclusive result. 

Compared to the existing literature, our proposed computational framework comes from a different perspective. It builds upon Doob's $h$-transform \cite{doob1957conditional,doob1984classical}, and involves a relatively simple computation of a score-vector field (\textbf{step 1}) followed by solving a static linear system (\textbf{step 2}). The proposed computation does not involve derivatives of the diffusion coefficient $\bm{\sigma}$ or additional control as in \eqref{equivdetcontrolsystem}. Unlike the stochastic CBF-based methods, the conditions derived here are necessary and sufficient. Even if the a.s. invariance does not hold for a specific application, falsifying so using a method such as ours, can rigorously justify weaker or more conservative problem formulations.


\subsection{Contributions} 
Our specific contributions are the following.

\begin{itemize}

\item We derive constructive necessary and sufficient conditions to certify or falsify a.s. set invariance for controlled diffusions. This is done in Sec. \ref{sec:giventimesetinvariance} for $\mathcal{I}= [0,T]$, and in Sec. \ref{sec:infinitetimesetinvariance} for $\mathcal{I}= [0,\infty)$. The conditions involve the computation of score functions \cite{song2019generative,song2020score,vahdat2021score,jo2022score,lee2023score,lee2023convergence} well-known in diffusion models for generative AI. However, the use of score functions in controlled set invariance is new.

\item For $T<\infty$, we prove inverse optimality guarantee in Sec. \ref{sec:inverseoptimality} for the provably certified controller.

\item We illustrate the proposed computational test using several analytical and numerical examples in Sec. \ref{sec:numerics}. In particular, we point out how numerical computation in general can be performed for $\mathcal{I}= [0,T]$ using the Feynman-Kac path integrals, and for $\mathcal{I}= [0,\infty)$ using the inverse power iterations.

\end{itemize}

  

\section{Background}\label{sec:Background}
In this section, we collect some notations and results which find use in our derivations and proofs.

We use $\circ$ for composition of operators. We denote the Euclidean gradient and Hessian w.r.t. $\bm{x}\in\mathbb{R}^{n}$ as $\nabla_{\bm{x}}$ and $\hess_{\bm{x}}$, respectively. For $\phi\in\mathcal{C}^{1,2}\left([0,\infty);\mathbb{R}^n\right)$, the process $\bm{x}_t$ in \eqref{UnconditionedSDE} has infinitesimal generator $\gen$ given by
\begin{align}
\gen \left[\phi\right](t,\bm{x}) 
= \langle\bm{f},\nabla_{\bm{x}}\phi\rangle + \dfrac{1}{2}\langle\bm{\Sigma},\hess_{\bm{x}}\:\phi\rangle
\label{defGenerator}
\end{align}
which follows from It\^{o}'s lemma \cite[Lemma 7.3.2, Thm. 7.3.3]{oksendal2013stochastic}. Assumptions \textbf{A1} and \textbf{A2} guarantee that $\gen$ uniquely determines\footnote{i.e., assumptions \textbf{A1-A2} guarantee the solution of the \emph{martingale problem} \cite[Sec. 1.4.3]{bakry2014analysis} associated with $\gen$, which is to guarantee that for any $\phi\in\dom(\gen)$, the map $t\mapsto\phi(\bm{x}_t)$ is right-continuous and the process $\phi(\bm{x}_t) - \int_{0}^{t}\gen\left[\phi(\bm{x}_{\tau})\right]\differential\tau, t\geq 0$, is a martingale.} the diffusion process $\bm{x}_t$ with transition density $p(s,\bm{x},t,\bm{y})$ for all $0\leq s<t < \infty$ w.r.t. the Lebesgue measure on $\mathbb{R}^{n}$, and that $\gen$ is strictly elliptic \cite[Ch. 3]{gilbarg1977elliptic}.

For all $\bm{x}\in\mathbb{R}^{n}$, define the expectation operator
\begin{align}
\mathbb{E}_{\bm{x}}\left[\phi(\bm{x}_t)\right] := \displaystyle\int_{\mathbb{R}^{n}} \phi(\bm{y}) \,  p(0,\bm{x},t,\bm{y}) \, \differential\bm{y} \mid \bm{x}_0 = \bm{x}. 
\label{defExpectation}
\end{align}
The action of the transition density $p$ is then related to the generator $\gen$ as
\begin{align}
\dfrac{\partial}{\partial t}\mathbb{E}_{\bm{x}}\left[\phi(\bm{x}_t)\right] = 
\mathbb{E}_{\bm{x}}\left[\gen\left[\phi\right](t, \bm{x}_t)\right],
\label{ActionOfGeneratorL}
\end{align}
i.e., $\exp\left(t\gen\right)$ is the Markovian semigroup generated by $\gen$ for all $t\geq 0$. 

The generator $\gen$ is associated with the \emph{backward Kolmogorov PDE}
\begin{align}
\dfrac{\partial\phi}{\partial t} + \gen\left[\phi\right]=0.
\label{BackwardPDE}    
\end{align}
The adjoint of $\gen$, denoted by  $\gen^{\dagger}$, is
\begin{align}
\gen^{\dagger}[\mu](t,\bm{x}) := \nabla_{\bm{x}}\cdot\left(\mu\bm{f}\right) - \dfrac{1}{2}\Delta_{\bm{\Sigma}}\:\mu,
\label{defAdjoint}    
\end{align}
where the weighted Laplacian $\Delta_{\bm{\Sigma}} := \displaystyle\sum_{i,j=1}^{n}\dfrac{\partial^2\left(\bm{\Sigma}_{ij}(t,\bm{x})\mu\right)}{\partial x_i\partial x_j}$, and $\gen^{\dagger}$ is associated with the \emph{forward Kolmogorov a.k.a. Fokker-Planck PDE}
\begin{align}
\dfrac{\partial \mu}{\partial t} + \gen^{\dagger}\left[\mu\right] = 0,
\label{FordwardPDE}    
\end{align}
that propagates the law of the initial state $\bm{x}_0 \sim \mu_0$ forward in time, i.e., $\bm{x}_t \sim \mu(t,\bm{x})$ obtained as solution of the PDE \eqref{FordwardPDE} subject to initial condition $\mu(t=0,\bm{x})=\mu_0$. In particular, $\differential\mu(t,\bm{x})=\int p(t_0,\bm{x}_{0},t,\bm{x})\differential\mu_0(\bm{x}_0)$, and the transition probability density $p$ solves the PDE initial value problem
\begin{align}
\dfrac{\partial p}{\partial t} + \gen^{\dagger}\left[p\right] = 0, \quad p(t_0,\bm{x}_0,t_{0},\bm{x})=\delta(\bm{x}-\bm{x}_0),
\label{TransitionPDFIVP}    
\end{align}
where $\delta(\cdot)$ denotes the Dirac delta. In the special case $\bm{f}\equiv \bm{0}$, $\bm{\sigma}=\bm{I}_{n}$, the PDE \eqref{BackwardPDE} (resp. \eqref{FordwardPDE}) reduces to the backward (resp. forward) heat equation.

\begin{definition}\label{def:carreduchamp}
The \emph{carr\'{e} du champ operator} $\Gamma$ \cite{kunita1969absolute,ledoux2000geometry} associated with the infinitesimal generator $\gen$, is
\begin{align}
\Gamma\left(\phi,\psi\right) := \dfrac{1}{2}\left(\gen\left[\phi\psi\right] -\phi \gen\left[\psi\right] - \psi \gen\left[\phi\right]\right)
\label{defCarreDuChamp}
\end{align}
defined for all $(\phi,\psi)\in\mathcal{A}\subseteq\dom\left(\gen\right)$, where $\mathcal{A}$ is a vector subspace in the domain of $\gen$ such that the product $\phi\psi\in\dom\left(\gen\right), \:\forall(\phi,\psi)\in\mathcal{A}$. In other words, $\mathcal{A}$ is an algebra.
\end{definition}
By definition, $\Gamma$ is symmetric, bilinear and nonnegative (since $\Gamma(\phi,\phi)\geq 0$ for all $\phi\in\mathcal{A}$). Definition \ref{def:carreduchamp} and Lemma \ref{LemmaGammaForOurGenerator} stated next will be useful in Theorem \ref{Thm:GivenFiniteTimeInvariance} statement (Sec. \ref{subsec:conditioneddiffusion}) and in its proof (Appendix \ref{App:ProofThm:GivenFiniteTimeInvariance}).
\begin{lemma}[\textbf{Carr\'{e} du champ for It\^{o} diffusion}]\label{LemmaGammaForOurGenerator}
For the generator $\gen$ in \eqref{defGenerator} and $(\phi,\psi)$ as in Definition \ref{def:carreduchamp}, the carr\'{e} du champ operator \eqref{defCarreDuChamp} is given by
\begin{align}
\Gamma\left(\phi,\psi\right) = \dfrac{1}{2}\langle\bm{\Sigma}\nabla\phi,\nabla\psi\rangle.
\label{GammaInTermsOfSigma}
\end{align}
Notably, $\Gamma$ is independent of the drift coefficient $\bm{f}$.
\end{lemma}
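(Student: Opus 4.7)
My plan is to prove Lemma \ref{LemmaGammaForOurGenerator} by direct substitution: plug the explicit form of $\gen$ from \eqref{defGenerator} into the defining identity \eqref{defCarreDuChamp}, expand $\gen[\phi\psi]$ using the Leibniz rules for gradient and Hessian, and observe that all first-order drift contributions and all pure second-order Hessian contributions cancel, leaving only the cross-derivative terms that assemble into $\tfrac{1}{2}\langle\bm{\Sigma}\nabla\phi,\nabla\psi\rangle$.

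Concretely, the first step is to write $\gen[\phi\psi] = \langle\bm{f},\nabla(\phi\psi)\rangle + \tfrac{1}{2}\langle\bm{\Sigma},\hess_{\bm{x}}(\phi\psi)\rangle$ and apply the product rules
\begin{align*}
\nabla(\phi\psi) &= \phi\nabla\psi + \psi\nabla\phi,\\
\hess_{\bm{x}}(\phi\psi) &= \phi\,\hess_{\bm{x}}\psi + \psi\,\hess_{\bm{x}}\phi + \nabla\phi(\nabla\psi)^{\top} + \nabla\psi(\nabla\phi)^{\top}.
\end{align*}
Substituting these into $\gen[\phi\psi] - \phi\gen[\psi] - \psi\gen[\phi]$, the drift piece $\langle\bm{f},\phi\nabla\psi+\psi\nabla\phi\rangle$ is exactly cancelled by $-\phi\langle\bm{f},\nabla\psi\rangle-\psi\langle\bm{f},\nabla\phi\rangle$, and the diagonal Hessian pieces $\tfrac{1}{2}\langle\bm{\Sigma},\phi\hess_{\bm{x}}\psi+\psi\hess_{\bm{x}}\phi\rangle$ are likewise cancelled by the corresponding contributions of $-\phi\gen[\psi]$ and $-\psi\gen[\phi]$. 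What remains is $\tfrac{1}{2}\langle\bm{\Sigma},\nabla\phi(\nabla\psi)^{\top} + \nabla\psi(\nabla\phi)^{\top}\rangle$.

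The final step is to convert this Frobenius inner product into the claimed quadratic form. Using symmetry of $\bm{\Sigma}$ and the identity $\langle \bm{A}, \bm{u}\bm{v}^{\top}\rangle = \tr(\bm{A}^{\top}\bm{u}\bm{v}^{\top}) = \bm{v}^{\top}\bm{A}\bm{u}$, both rank-one terms equal $\langle\bm{\Sigma}\nabla\phi,\nabla\psi\rangle$, so the sum contributes $2\langle\bm{\Sigma}\nabla\phi,\nabla\psi\rangle$. Multiplying by the prefactor $\tfrac{1}{2}$ from the Hessian term and again by the $\tfrac{1}{2}$ in \eqref{defCarreDuChamp} yields $\Gamma(\phi,\psi) = \tfrac{1}{2}\langle\bm{\Sigma}\nabla\phi,\nabla\psi\rangle$, and the independence of $\bm{f}$ is manifest from the cancellation noted above.

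There is no real obstacle here, only bookkeeping. The only subtle point worth stating cleanly is the identity between the symmetrized cross-Hessian term $\tfrac{1}{2}\langle\bm{\Sigma},\nabla\phi(\nabla\psi)^{\top} + \nabla\psi(\nabla\phi)^{\top}\rangle$ and $\langle\bm{\Sigma}\nabla\phi,\nabla\psi\rangle$, which relies on the symmetry of $\bm{\Sigma}$ guaranteed by its construction in \eqref{defSigma}. The algebra/domain hypothesis on $\mathcal{A}$ in Definition \ref{def:carreduchamp} ensures that all of $\phi$, $\psi$, and $\phi\psi$ lie in $\dom(\gen)$, so each individual application of $\gen$ is justified.
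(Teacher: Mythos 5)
Your proposal is correct and follows essentially the same route as the paper's proof: direct substitution of the generator into the defining identity, expansion via the Hessian product rule, and conversion of the Frobenius inner product to the quadratic form using symmetry of $\bm{\Sigma}$ and cyclicity of the trace. The only difference is that you spell out the cancellation of the drift and diagonal-Hessian pieces explicitly, whereas the paper compresses that step by writing the carr\'{e} du champ directly as $\frac{1}{4}\langle\bm{\Sigma},\hess_{\bm{x}}(\phi\psi)-\phi\hess_{\bm{x}}\psi-\psi\hess_{\bm{x}}\phi\rangle$; this is a presentational, not substantive, difference.
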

\begin{proof}
Using \eqref{defGenerator} in \eqref{defCarreDuChamp}, we get
\begin{align}
&\Gamma\left(\phi,\psi\right)= \dfrac{1}{4}\langle\bm{\Sigma},\hess_{\bm{x}}\left(\phi\psi\right) - \phi\hess_{\bm{x}}\psi - \psi\hess_{\bm{x}}\phi\rangle\nonumber\\
&=\dfrac{1}{4}\bigg\langle\!\bm{\Sigma}, \left(\nabla_{\bm{x}}\phi\right)\left(\nabla_{\bm{x}}\psi\right)^{\top}\!\bigg\rangle + \dfrac{1}{4}\bigg\langle\!\bm{\Sigma}, \left(\nabla_{\bm{x}}\psi\right)\left(\nabla_{\bm{x}}\phi\right)^{\top}
\!\bigg\rangle
\label{SimplifyGamma}
\end{align}
where the last equality uses the identity: $\hess_{\bm{x}}\left(\phi\psi\right) = \phi\hess_{\bm{x}}\psi + \left(\nabla_{\bm{x}}\phi\right)\left(\nabla_{\bm{x}}\psi\right)^{\top} + \psi\hess_{\bm{x}}\phi + \left(\nabla_{\bm{x}}\psi\right)\left(\nabla_{\bm{x}}\phi\right)^{\top}$.

Recalling that $\bm{\Sigma}$ is symmetric and using the definition of the Hilbert-Schmidt inner product $\langle\cdot,\cdot\rangle$, we have
\begin{subequations}
\label{FirstInnerProduct}
\begin{align}
\bigg\langle\bm{\Sigma}, \left(\nabla_{\bm{x}}\phi\right)\left(\nabla_{\bm{x}}\psi\right)^{\top}\bigg\rangle &= \tr\left(\bm{\Sigma}\left(\nabla_{\bm{x}}\phi\right)\left(\nabla_{\bm{x}}\psi\right)^{\top}\right) \nonumber\\
&= \tr\left(\left(\nabla_{\bm{x}}\psi\right)^{\top}\bm{\Sigma}\left(\nabla_{\bm{x}}\phi\right)\right)\label{CycPerm1}\\
&=\left(\nabla_{\bm{x}}\phi\right)^{\top}\bm{\Sigma}\left(\nabla_{\bm{x}}\psi\right)\label{Transpose1},
\end{align}
\end{subequations}
where we used the invariance of trace under cyclic permutation (in \eqref{CycPerm1}) and under transposition (in \eqref{Transpose1}). Likewise, 
\begin{align}
\bigg\langle\bm{\Sigma}, \left(\nabla_{\bm{x}}\psi\right)\left(\nabla_{\bm{x}}\phi\right)^{\top}\bigg\rangle &= \tr\left(\bm{\Sigma}\left(\nabla_{\bm{x}}\psi\right)\left(\nabla_{\bm{x}}\phi\right)^{\top}\right)\nonumber\\
&= \left(\nabla_{\bm{x}}\phi\right)^{\top}\bm{\Sigma}\left(\nabla_{\bm{x}}\psi\right),
\label{SecondInnerProduct}
\end{align}
where \eqref{SecondInnerProduct} again used the invariance of trace under cyclic permutation. Substituting \eqref{FirstInnerProduct}-\eqref{SecondInnerProduct} in \eqref{SimplifyGamma}, we arrive at \eqref{GammaInTermsOfSigma}.
\end{proof}


\section{Controlled Set Invariance for $\mathcal{I}=[0,T]$}\label{sec:giventimesetinvariance}
In Sec. \ref{subsec:conditioneddiffusion}, we begin by determining what the conditioned diffusion process for the prior dynamics \eqref{UnconditionedSDE} should be, so as to satisfy the desired set invariance. In Sec. \ref{subsec:finitetimeScoreVectorFieldAndController}, using this conditioned diffusion, we ``back out" to characterize the controllers or the lack thereof for the controlled process \eqref{ControlledSDE}.

\subsection{Conditioned diffusion}\label{subsec:conditioneddiffusion}
Our main idea behind prescribed finite time set invariance is to condition the unforced process \eqref{UnconditionedSDE} to not exit the given $\mathcal{X}$ for all $t\in\mathcal{I}$. This conditioning is done via a version of the \emph{Doob's $h$ transform} \cite[Ch. IV.39]{rogers2000diffusions}, \cite{pinsky1985convergence, deblassie1988doob,chetrite2015nonequilibrium} that originated in the works of Doob \cite{doob1957conditional,doob1984classical} for conditioning Wiener processes. 

The $h$ in Doob's $h$ transform stands for ``harmonic" as the conditioning involves constructing suitable positive harmonic function w.r.t. the generator of the unforced process. The sample path dynamics and the generator of the space-time conditioned process needed in our setting, are summarized in Theorem \ref{Thm:GivenFiniteTimeInvariance}. Before presenting this result, we formalize the necessary ideas.

Consider the filtered probability space $\left(\Omega,\mathcal{F},\left(\mathcal{F}_{t}\right)_{t\geq 0},\mathbb{P}\right)$ associated with the unforced Markov diffusion process $\bm{x}_{t}$ in \eqref{UnconditionedSDE} with initial condition $\bm{x}_{0}$, where $\bm{f},\bm{\sigma}$ satisfy \textbf{A1}, \textbf{A2}. Given $\mathcal{X}\subset\mathbb{R}^{n}$ satisfying \textbf{A3}, let the \emph{first exit time}
\begin{align}
\tau_{\mathcal{X}} := \inf\{ t\geq 0 \mid \bm{x}_t\notin\mathcal{X}\}.
\label{deffFirstExitTime}
\end{align}
Given $T<\infty$ and an event $A\in\mathcal{F}_{t}$, we define the conditional probability 
\begin{align}
\widetilde{\mathbb{P}}\left(A\right):=\mathbb{E}\left[\mathbf{1}_{A}\mid \tau_{\mathcal{X}}>T\right],
\label{defCondProbOfEvent}
\end{align}
where $\mathbf{1}$ denotes the indicator function, and the expectation $\mathbb{E}$ is w.r.t. the measure $\mathbb{P}$. For any $0\leq t < T$, letting
\begin{align}
&\mathbb{P}_{\bm{x}}(\cdot):= \mathbb{P}\left(\cdot\mid\bm{x}_{0}=\bm{x}\right), \quad \widetilde{\mathbb{P}}_{\bm{x}}(\cdot):= \widetilde{\mathbb{P}}\left(\cdot\mid\bm{x}_{0}=\bm{x}\right),\label{defPtildeP}\\
&h_{T}(t,\bm{x}) := \mathbb{P}_{\bm{x}}\left(\tau_{\mathcal{X}}>T\right),
\label{defhT}
\end{align}
we have $\widetilde{\mathbb{P}}_{\bm{x}}\ll\mathbb{P}_{\bm{x}}$, and
\begin{align}
\widetilde{\mathbb{P}}_{\bm{x}}\left(A\right) = \dfrac{\mathbb{P}_{\bm{x}}\left(A,\tau_{\mathcal{X}}>T\right)}{\mathbb{P}_{\bm{x}}\left(\tau_{\mathcal{X}}>T\right)}=\dfrac{\mathbb{E}_{\bm{x}}\left[\mathbf{1}_{A}h_{T}\left(\bm{x}_{t\wedge\tau_{\mathcal{X}}}\right)\right]}{h_{T}(t,\bm{x})},
\label{ConditionalProb}
\end{align}
wherein we used the Markov property and the notation $a\wedge b := \min\{a,b\}\:\forall a,b\in\mathbb{R}$. A consequence of definition \eqref{defhT} is that the function $h_{T}(\cdot,\cdot)$ takes values in $[0,1]$. The subscript $T$ in notation $h_{T}(\cdot,\cdot)$ emphasizes the function's parametric dependence on the given horizon length $T$.  

Restricted to $\mathcal{F}_{t}$, the law of the unconditioned process $\bm{x}_{t}$ in \eqref{UnconditionedSDE} with initial condition $\bm{x}_{0}=\bm{x}$ is $\mathbb{P}_{\bm{x}}\vert_{\mathcal{F}_{t}}$. With the same initial condition, let $\widetilde{\bm{x}}_{t}$ be the conditioned diffusion process associated with the law $\widetilde{\mathbb{P}}_{\bm{x}}\vert_{\mathcal{F}_{t}}$. From \eqref{ConditionalProb}, the Radon-Nikodym derivative
\begin{align}
\dfrac{\differential\widetilde{\mathbb{P}}_{\bm{x}}}{\differential\mathbb{P}_{\bm{x}}}\bigg\vert_{\mathcal{F}_{t}} = \dfrac{h_{T}\left(\bm{x}_{t\wedge\tau_{\mathcal{X}}}\right)}{h_{T}(t,\bm{x})},
\label{RadonNikodym}    
\end{align}
which will be used in the proof of Theorem \ref{Thm:GivenFiniteTimeInvariance} to derive the sample path dynamics for $\widetilde{\bm{x}}_{t}$.

In proving Theorem \ref{Thm:GivenFiniteTimeInvariance}, we will also make use of the fact that the LHS of the backward Kolmogorov PDE \eqref{BackwardPDE} can be seen as the generator for the space-time lifting of \eqref{UnconditionedSDE} to subsets of $\mathbb{R}^{n+1}$, i.e., the generator for the lifted SDE
\begin{align}
\begin{pmatrix}
\differential\bm{x}_t\\
\differential t
\end{pmatrix} = \begin{pmatrix}
\bm{f}(t,\bm{x}_t)\\
1
\end{pmatrix}\differential t + \begin{pmatrix}
\bm{\sigma}(t,\bm{x}_t)\\
\bm{0}_{1\times p}
\end{pmatrix}\differential\bm{w}_t.
\label{SpaceTimeUnconditionedSDE}
\end{align}
Indeed, for $\phi\in\mathcal{C}^2\left(\mathbb{R}^{n+1}\right)$, the generator $\gen_0$ of \eqref{SpaceTimeUnconditionedSDE} is 
\begin{align}
&\gen_{0}\left[\phi\right]\begin{pmatrix}
\bm{x}\\
t
\end{pmatrix} 
= 
\Bigg\langle 
\begin{pmatrix}
\bm{f}\\
1
\end{pmatrix},\begin{pmatrix}
\nabla_{\bm{x}}\phi\\
\frac{\partial\phi}{\partial t}
\end{pmatrix}
\Bigg\rangle 
\nonumber\\
&+ 
\dfrac{1}{2}
\Bigg\langle\!\!\!\begin{pmatrix}
\!\bm{\Sigma} & \bm{0}_{n\times 1}\\
\bm{0}_{1\times n} & 0
\end{pmatrix},{\small{\begin{psmallmatrix}
\hess_{\bm{x}}\phi & \begin{bmatrix}
\frac{\partial^2\phi}{\partial x_1\partial t}\\
\vdots\\
\frac{\partial^2\phi}{\partial x_n\partial t}
\end{bmatrix}\\
\begin{bsmallmatrix}
\frac{\partial^2\phi}{\partial t\partial x_1}
\hdots \frac{\partial^2\phi}{\partial t\partial x_n}
\end{bsmallmatrix} & \frac{\partial^2\phi}{\partial t^2}
\end{psmallmatrix}}}
\Bigg\rangle\nonumber\\
&= \langle\bm{f},\nabla_{\bm{x}}\phi\rangle + \dfrac{\partial\phi}{\partial t} + \dfrac{1}{2}\langle\bm{\Sigma},\hess_{\bm{x}}\phi\rangle = \gen\left[\phi\right] + \dfrac{\partial\phi}{\partial t}.
\label{GeneratorOfLiftedSDE}
\end{align}
Thus, the lifted generator $\gen_0$ and the base generator $\gen$ are related as $\gen_0 = \frac{\partial}{\partial t} + \gen$. These ideas lead to the following result (proof in Appendix \ref{App:ProofThm:GivenFiniteTimeInvariance}).

\begin{theorem}\label{Thm:GivenFiniteTimeInvariance}(\textbf{Conditioned process for prescribed finite time set invariance})
Consider an It\^{o} diffusion $\bm{x}_t$ satisfying \eqref{UnconditionedSDE} with associated generator $\gen$ given by \eqref{defGenerator}, where $\bm{f},\bm{\sigma}$ satisfy assumptions \textbf{A1}, \textbf{A2}. Given $T<\infty$ and the open set $\mathcal{X}\subset\mathbb{R}^{n}$ satisfying \textbf{A3}, the process $\bm{x}_t$ starting from an initial condition $\bm{x}_{0}\in\mathcal{X}$ and conditioned to stay in $\mathcal{X}$ a.s. $\forall t\in[0,T]$, satisfies It\^{o} diffusion
\begin{align}
\differential\widetilde{\bm{x}}_t &= \left(\bm{f}\left(t,\widetilde{\bm{x}}_t\right) + \bm{\Sigma}\left(t,\widetilde{\bm{x}}_t\right)\nabla_{\widetilde{\bm{x}}_t}\log h_{T}(t,\widetilde{\bm{x}}_t)\right)\differential t \nonumber\\
&\qquad\qquad\qquad+ \bm{\sigma}\left(t,\widetilde{\bm{x}}_t\right)\differential \bm{w}_t, \quad \widetilde{\bm{x}}_0 := \bm{x}_0,
\label{FiniteTimeSDE}    
\end{align}
where $h_T\in\mathcal{C}^{1,2}\left([0,T];\mathcal{X}\right)$ is the unique space-time harmonic function solving the Dirichlet BVP
\begin{subequations}
\begin{align}
&\gen_{0} [h_T] = \dfrac{\partial h_T}{\partial t} + \gen\left[h_T\right]=0 \quad\forall\:\left(t,\bm{x}\right)\in[0,T]\times\mathcal{X},\label{finiteTimePDE}\\
& h_T(t=T,\bm{x}) = 
1 \quad\forall \bm{x}\in\mathcal{X},
\label{InitialCondition}\\
&h_T(t,\bm{x}) = 0 \quad\forall\:(t,\bm{x})\in[0,T)\times\partial\mathcal{X}.\label{finiteTimeBC}
\end{align}
\label{finiteTimeDirichletPDEBVP}
\end{subequations}
Furthermore, $\widetilde{\bm{x}}_t$ has generator $\widetilde{\gen}$ that satisfies the backward Kolmogorov PDE: 
\begin{subequations}
\begin{align}
&\left(\dfrac{\partial}{\partial t}+\widetilde{\gen}\right)[\phi] = \left(h_T^{-1}\circ \left(\dfrac{\partial}{\partial t}+\gen\right) \circ h_T\right) \left[\phi\right] = 0,\label{NewGenAsComposition}\\ 
&\Leftrightarrow\dfrac{\partial\phi}{\partial t} + \underbrace{\gen\left[\phi\right] + 2\Gamma\left(\log h_T, \phi\right)}_{:= \widetilde{\gen}[\phi]} = 0,
\label{NewGenAsGamma}
\end{align}
\label{NewGenFiniteInTermsOfOldGen}   \end{subequations}
where the carr\'{e} du champ operator $\Gamma$ is defined in \eqref{defCarreDuChamp}.
\end{theorem}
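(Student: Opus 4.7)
The plan is to establish the theorem's three claims in the order they appear: the Dirichlet BVP for $h_T$, the conditioned SDE \eqref{FiniteTimeSDE}, and the generator identity \eqref{NewGenFiniteInTermsOfOldGen}. For the first, I would read \eqref{defhT} as the probability that the unforced path starting from $\bm{x}$ at time $t$ stays in $\mathcal{X}$ up to $T$, and use the Markov property to show that $h_T(t\wedge\tau_{\mathcal{X}},\bm{x}_{t\wedge\tau_{\mathcal{X}}})$ is a bounded martingale. Together with the uniform ellipticity in \textbf{A2} and the Lipschitz boundary regularity in \textbf{A3}, standard parabolic theory (Feynman--Kac representation plus interior Schauder estimates) yields a unique $\mathcal{C}^{1,2}$ solution of \eqref{finiteTimePDE} meeting the terminal condition \eqref{InitialCondition} and the lateral Dirichlet data \eqref{finiteTimeBC}; the parabolic strong maximum principle then gives $h_T>0$ on $[0,T)\times\mathcal{X}$, so $\log h_T$ is well-defined in the interior. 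The lifted-generator identity from \eqref{GeneratorOfLiftedSDE} recasts \eqref{finiteTimePDE} as $\gen_{0}[h_T]=0$.

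For the SDE, I would start from the Radon--Nikodym derivative \eqref{RadonNikodym} and invoke Girsanov's theorem. Applying It\^{o}'s formula to $h_T(t\wedge\tau_{\mathcal{X}},\bm{x}_{t\wedge\tau_{\mathcal{X}}})$ and using $\gen_0[h_T]=0$ annihilates the finite-variation part, exhibiting the density as an exponential martingale whose integrand is $\bm{\sigma}^{\top}\nabla_{\bm{x}}\log h_T$. Girsanov then shifts the driving Wiener process by this integrand, contributing the extra drift $\bm{\sigma}\bm{\sigma}^{\top}\nabla_{\bm{x}}\log h_T=\bm{\Sigma}\nabla_{\bm{x}}\log h_T$, which is precisely the drift appearing in \eqref{FiniteTimeSDE}.

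The generator identity reduces to an algebraic manipulation. Rearranging Definition \ref{def:carreduchamp} gives $\gen[h_T\phi]=h_T\gen[\phi]+\phi\gen[h_T]+2\Gamma(h_T,\phi)$, and combining with $\partial_t(h_T\phi)=h_T\partial_t\phi+\phi\,\partial_t h_T$ yields
\begin{equation*}
h_T^{-1}\left(\dfrac{\partial}{\partial t}+\gen\right)(h_T\phi) \;=\; \left(\dfrac{\partial}{\partial t}+\gen\right)[\phi] + \dfrac{\phi}{h_T}\left(\dfrac{\partial}{\partial t}+\gen\right)[h_T] + \dfrac{2}{h_T}\Gamma(h_T,\phi).
\end{equation*}
The middle term vanishes by \eqref{finiteTimePDE}, and Lemma \ref{LemmaGammaForOurGenerator} converts $h_T^{-1}\Gamma(h_T,\phi)=\tfrac{1}{2h_T}\langle\bm{\Sigma}\nabla h_T,\nabla\phi\rangle$ into $\Gamma(\log h_T,\phi)$, producing \eqref{NewGenAsGamma}; the composition form in \eqref{NewGenAsComposition} follows by reading the identity in the opposite direction.

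The main obstacle I anticipate is the boundary behaviour of $\log h_T$: since $h_T$ vanishes on $\partial\mathcal{X}$, the correction drift $\bm{\Sigma}\nabla\log h_T$ generically blows up there and Novikov's condition cannot hold globally. The standard remedy is to localize on subdomains $\mathcal{X}_{\varepsilon}\Subset\mathcal{X}$ where $h_T$ is bounded below, apply Girsanov on each, and pass to the limit using the fact that reweighting by $h_T$ assigns zero mass to any trajectory approaching $\partial\mathcal{X}$---so $\widetilde{\bm{x}}_t$ a.s. never reaches the boundary, which is exactly the a.s. set invariance we sought. Establishing the $\mathcal{C}^{1,2}$ regularity and strict positivity of $h_T$ on the possibly non-smooth Lipschitz domain $\mathcal{X}$ is the accompanying technical step, handled by boundary Harnack estimates.
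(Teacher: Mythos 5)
Your proposal is correct and its overall architecture matches the paper's: Feynman--Kac for the BVP, Girsanov plus It\^o for the SDE, and an algebraic identity for the generator. Where you genuinely diverge is in the last step. The paper first writes $\widetilde{\gen}[\phi]=\gen[\phi]+\langle\bm{\Sigma}\nabla_{\bm{x}}\log h_T,\nabla_{\bm{x}}\phi\rangle$ from the SDE drift and then expands $h_T^{-1}\circ(\partial_t+\gen)\circ h_T$ by the Hessian product rule and trace manipulations to match it; you instead rearrange Definition \ref{def:carreduchamp} as $\gen[h_T\phi]=h_T\gen[\phi]+\phi\gen[h_T]+2\Gamma(h_T,\phi)$, kill the middle term by \eqref{finiteTimePDE}, and convert $h_T^{-1}\Gamma(h_T,\phi)=\Gamma(\log h_T,\phi)$ via Lemma \ref{LemmaGammaForOurGenerator}. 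That route is cleaner and avoids the explicit Hilbert--Schmidt bookkeeping. The one small thing to make explicit is that $2\Gamma(\log h_T,\phi)=\langle\bm{\Sigma}\nabla_{\bm{x}}\log h_T,\nabla_{\bm{x}}\phi\rangle$ is exactly the first-order correction in the generator of the SDE \eqref{FiniteTimeSDE}; otherwise \eqref{NewGenAsComposition} is only an identity for a named operator, not yet identified with $\widetilde{\gen}$. For the SDE step you run Girsanov ``forward'' (It\^o on $h_T$ exhibits \eqref{RadonNikodym} as a Dol\'eans exponential, then read off the drift shift), whereas the paper posits the Girsanov form with unknown drift $\widetilde{\bm{f}}$, substitutes \eqref{RadonNikodym}, takes logs, applies It\^o to $\log h_T$, and compares coefficients --- both correct, yours a touch more direct. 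Your localization discussion (Girsanov on exhausting $\mathcal{X}_\varepsilon\Subset\mathcal{X}$ where $h_T$ is bounded below, then pass to the limit) is a genuine addition that the paper's appendix does not address; it is exactly the right fix for the failure of Novikov's condition near $\partial\mathcal{X}$ and strengthens the argument.
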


\begin{remark}[\textbf{Zero exit probability}]\label{remark:setinvariancefinitetime}
It follows from Theorem \ref{Thm:GivenFiniteTimeInvariance} that the It\^{o} diffusion process $\widetilde{\bm{x}}_t$ in \eqref{FiniteTimeSDE} satisfies
$\widetilde{\mathbb{P}}_{\bm{x}} \left( \tau_{\mathcal{X}} \leq T \right) = 0$, i.e., has exit probability w.r.t. set $\mathcal{X}$ equal to zero up until the given $T<\infty$.
\end{remark}

\begin{remark}[\textbf{Terminal hitting of a subset of $\mathcal{X}$}]\label{remark:hittingtargetset}
Theorem \ref{Thm:GivenFiniteTimeInvariance} statement and its proof generalize mutatis mutandis for the case when the RHS of the boundary condition \eqref{InitialCondition} is replaced by an indicator function over a terminal or goal set $\mathcal{X}_{T}\subseteq\mathcal{X}$ such that $\mathcal{X}_{T}$ satisfies \textbf{A3}. In that case, $\widetilde{\bm{x}}_{t}$ in \eqref{FiniteTimeSDE} is guaranteed to stay in $\mathcal{X}\:\forall t\in[0,T]$ and hit $\mathcal{X}_{T}$ at $t=T$ a.s. Then \eqref{finiteTimePDE}, \eqref{finiteTimeBC} and \eqref{NewGenFiniteInTermsOfOldGen} remain unchanged while \eqref{InitialCondition} is replaced by
\begin{align}
h_T(t=T,\bm{x}) = 
1 \quad\forall \bm{x}\in\mathcal{X}_{T}.
\label{InitialConditionGoalSet}    \end{align}
In the special case $\mathcal{X}_{T}=\mathcal{X}$, condition \eqref{InitialConditionGoalSet} reduces to \eqref{InitialCondition}.
\end{remark}

\begin{remark}[\textbf{Transition probability density for the conditioned process}]
The result \eqref{NewGenFiniteInTermsOfOldGen} together with \eqref{defAdjoint} and \eqref{TransitionPDFIVP}, imply that the transition density for the conditioned process $\widetilde{p}_{T}$ is related to that of the unconditioned process, denoted as $p$, as follows:
\begin{align}
\widetilde{p}_{T}\left(t_{0},\bm{x}_{0},t,\bm{x}\right) = \dfrac{h_{T}(t,\bm{x})}{h_{T}(t_0,\bm{x}_0)}p\left(t_{0},\bm{x}_{0},t,\bm{x}\right),
\label{ConditionedTransitionDensity}    
\end{align}
where $0\leq t_{0} \leq t \leq T$. We will use \eqref{ConditionedTransitionDensity} in \textbf{Example 8}.
\end{remark}

In Sec. \ref{subsec:numerical_FiniteHorizon}, we will exemplify the solution of the Dirichlet BVP \eqref{finiteTimeDirichletPDEBVP} using both semi-analytical and numerical methods. We next describe how this solution plays a central role in certifying or falsifying controlled set invariance.


\subsection{From conditioned diffusion to controller}\label{subsec:finitetimeScoreVectorFieldAndController}
The conditioned diffusion \eqref{FiniteTimeSDE} differs from the prior dynamics \eqref{UnconditionedSDE} only by an additional drift term
\begin{align}
\bm{s}_{T}\left(t,\bm{x}\right) := \bm{\Sigma}\left(t,\bm{x}\right)\nabla_{\bm{x}}\log h_{T}(t,\bm{x}) \quad\forall (t,\bm{x})\in [0,T]\times \mathcal{X}.
\label{defScoreFiniteTime}    
\end{align}
We refer to $\bm{s}_{T}$ as the \emph{score vector field}. This terminology is motivated by that the RHS of \eqref{defScoreFiniteTime} is a scaled gradient of the log-likelihood since $h_{T}$, by definition \eqref{defhT}, quantifies a probability. We note that score-based computation is now widely used in generative AI models \cite{song2019generative,song2020score,vahdat2021score,jo2022score,lee2023score,lee2023convergence}. In our context, solving for $h_{T}$ from the Dirichlet BVP \eqref{finiteTimeDirichletPDEBVP} followed by computing \eqref{defScoreFiniteTime}, completes the \textbf{step 1} mentioned in Sec. \ref{sec:introduction}. 

We notice that the conditioned diffusion \eqref{FiniteTimeSDE} can be realized in the form \eqref{ControlledSDE} \emph{if and only if the score vector field $\bm{s}_{T}$ in \eqref{defScoreFiniteTime} is in the range of the input matrix $\bm{G}$}. This allows us to constructively perform \textbf{step 2} mentioned in Sec. \ref{sec:introduction}, i.e., to certify or falsify controllers for the desired set invariance, as summarized in the following.
\begin{theorem}[\textbf{Controller for prescribed finite time set invariance}]\label{Thm:ControllerFiniteTime}
Given $T<\infty$, the drift and diffusion coefficients coefficients $\bm{f},\bm{\sigma}$ satisfying \textbf{A1}, \textbf{A2}, the sets $\mathcal{X},\mathcal{X}_{T}\subseteq \mathcal{X}\in\mathbb{R}^{n}$ satisfying \textbf{A3}, let $h_{T}$ be the unique solution of the Dirichlet BVP \eqref{finiteTimePDE}, \eqref{finiteTimeBC}, \eqref{InitialConditionGoalSet}. Let $\bm{s}_{T}$ be given by \eqref{defScoreFiniteTime}, and let $\mathcal{R}(\cdot)$ denote ``the range of". The necessary and sufficient condition for the controlled diffusion \eqref{ControlledSDE} to meet controlled set invariance with problem data $\left(\bm{f},\bm{\sigma},\mathcal{X},\mathcal{X}_{T},\mathcal{I}=[0,T]\right)$ is 
\begin{align}
\bm{s}_{T}\in\mathcal{R}\left(\bm{G}\right)\quad\forall(t,\bm{x})\in\mathcal{I}\times\mathcal{X}.
\label{InTheRangeFiniteTime}    
\end{align}
\end{theorem}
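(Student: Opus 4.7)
The plan is to prove Theorem \ref{Thm:ControllerFiniteTime} by matching the controlled SDE \eqref{ControlledSDE} with the Doob-conditioned SDE \eqref{FiniteTimeSDE} from Theorem \ref{Thm:GivenFiniteTimeInvariance} (in the target-hitting generalization of Remark \ref{remark:hittingtargetset} when $\mathcal{X}_{T}\subsetneq\mathcal{X}$). Both SDEs share the same diffusion coefficient $\bm{\sigma}$, so realizing the conditioned process via \eqref{ControlledSDE} reduces entirely to reproducing its drift modification $\bm{s}_{T}$ through the control channel $\bm{G}\bm{u}$. Pointwise solvability of $\bm{G}\bm{u}=\bm{s}_{T}$ is precisely the range condition \eqref{InTheRangeFiniteTime}.

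For the sufficiency direction, I would assume \eqref{InTheRangeFiniteTime} and invoke a measurable selection argument to produce $\bm{u}(t,\bm{x})$ satisfying $\bm{G}(t,\bm{x})\bm{u}(t,\bm{x})=\bm{s}_{T}(t,\bm{x})$ pointwise on $\mathcal{I}\times\mathcal{X}$. Substituting this $\bm{u}$ into \eqref{ControlledSDE} yields $\differential\bm{x}_{t}^{\bm{u}}=(\bm{f}+\bm{s}_{T})\differential t+\bm{\sigma}\,\differential\bm{w}_{t}$, which is exactly \eqref{FiniteTimeSDE}. Theorem \ref{Thm:GivenFiniteTimeInvariance} together with Remarks \ref{remark:setinvariancefinitetime} and \ref{remark:hittingtargetset} then certify a.s.\ invariance on $[0,T]$ with the prescribed terminal hitting at $t=T$.

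For the necessity direction, I would suppose that some Markovian $\bm{u}$ makes the controlled process achieve a.s.\ set invariance and terminal hitting, set $\bm{v}:=\bm{G}\bm{u}$, and argue that $\bm{v}\equiv\bm{s}_{T}$ on $\mathcal{I}\times\mathcal{X}$, which immediately yields $\bm{s}_{T}\in\mathcal{R}(\bm{G})$. Both the controlled process and the Doob-conditioned process share the same diffusion tensor $\bm{\Sigma}$ (hence the same carr\'{e} du champ $\Gamma$ by Lemma \ref{LemmaGammaForOurGenerator}), and both have laws concentrated on continuous paths staying in $\mathcal{X}$ and landing in $\mathcal{X}_{T}$ at the terminal time. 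Uniform ellipticity (\textbf{A2}) forces the transition density of any such process to vanish on $\partial\mathcal{X}$, since otherwise the nondegenerate noise $\bm{\sigma}\,\differential\bm{w}_{t}$ would produce positive exit probability. Coupling this absorbing-type boundary behaviour with the uniqueness of the positive space-time harmonic function $h_{T}$ solving the Dirichlet BVP \eqref{finiteTimeDirichletPDEBVP} (guaranteed by \textbf{A2}, \textbf{A3}, and strict ellipticity of $\gen$) forces the two Markov laws to coincide, and comparing Fokker-Planck drifts gives $\bm{v}=\bm{s}_{T}$.

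The main obstacle I anticipate is this uniqueness step in the necessity direction, namely ruling out alternative Markovian drift corrections (including those with singular behaviour at $\partial\mathcal{X}$) that also produce a.s.\ invariance. My plan for closing this gap is a Girsanov-type argument on the stopped filtration $\mathcal{F}_{t\wedge\tau_{\mathcal{X}}}$, equating the Radon-Nikodym derivative $\mathrm{d}\mathbb{P}_{\bm{x}}^{\bm{u}}/\mathrm{d}\mathbb{P}_{\bm{x}}$ induced by the control with the h-transform density \eqref{RadonNikodym}; the triviality of $\mathcal{N}(\bm{\sigma}^{\top})$ under \textbf{A2} then propagates the equality of densities unambiguously to the equality of drifts. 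In parallel, I would invoke the well-posedness of the martingale problem for the generator $\widetilde{\gen}=\gen+2\Gamma(\log h_{T},\cdot)$ noted in Sec.\ \ref{sec:Background} to pin down the conditioned law abstractly and avoid any circularity in the PDE-level argument.
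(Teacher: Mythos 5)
Your sufficiency direction matches the paper: both SDEs \eqref{ControlledSDE} and \eqref{FiniteTimeSDE} share the diffusion coefficient $\bm{\sigma}$, so realizing the conditioned drift amounts to pointwise solvability of $\bm{G}\bm{u}=\bm{s}_{T}$, i.e., the range condition, after which Theorem \ref{Thm:GivenFiniteTimeInvariance} and Remarks \ref{remark:setinvariancefinitetime}--\ref{remark:hittingtargetset} give the a.s.\ guarantee. This is exactly the paper's argument, which in fact consists of a single sentence: ``equating'' the controlled process to $\widetilde{\bm{x}}_t$ and reading off the linear system.

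The difference lies in the necessity direction, and here your proposal is more honest than the paper but does not close the gap. The paper implicitly treats ``realizes the Doob-conditioned process'' and ``achieves a.s.\ set invariance with terminal hitting'' as equivalent; no uniqueness argument is given. You correctly isolate this as the crux, but the Girsanov-type closure you sketch is circular: equating $\differential\mathbb{P}^{\bm{u}}_{\bm{x}}/\differential\mathbb{P}_{\bm{x}}$ with the $h$-transform density \eqref{RadonNikodym} is precisely the conclusion you want, not something Girsanov hands you — Girsanov only expresses that Radon--Nikodym derivative in terms of the unknown drift correction $\bm{G}\bm{u}$. Likewise, the uniqueness of $h_{T}$ as solution of the Dirichlet BVP \eqref{finiteTimeDirichletPDEBVP} is a statement about the uncontrolled generator $\gen$; it does not pin down the transition density of the controlled process, whose Fokker--Planck equation carries the different drift $\bm{f}+\bm{G}\bm{u}$. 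What actually needs ruling out is the existence of some other Markovian drift correction, singular in the inward normal direction at $\partial\mathcal{X}$ but not equal to $\bm{s}_{T}$, that also produces a.s.\ nonexit and terminal hitting; neither your sketch nor the paper's one-line proof supplies such a rigidity argument, so both leave necessity as asserted rather than established.
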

Violation of \eqref{InTheRangeFiniteTime}, i.e., the statement $$\exists(t,\bm{x})\in\mathcal{I}\times\mathcal{X}\quad\text{such that}\quad\bm{s}_{T}\notin\mathcal{R}\left(\bm{G}\right),$$ 
is equivalent to non-existence of Markovian control \eqref{MarkovianControl} to achieve the a.s. set invariance with problem data $\left(\bm{f},\bm{\sigma},\mathcal{X},\mathcal{X}_{T},\mathcal{I}=[0,T]\right)$. Thus, Theorem \ref{Thm:ControllerFiniteTime} can be used for falsifying controlled set invariance for given problem data. 

On the other hand, when \eqref{InTheRangeFiniteTime} holds, Theorem \ref{Thm:ControllerFiniteTime}  not only certifies the existence of a Markovian controller \eqref{MarkovianControl} achieving the desired set invariance, but it also characterizes all such controllers $\bm{u}$ as the solution set of the static linear system 
\begin{align}
\bm{G}(t,\bm{x})\bm{u}(t,\bm{x})=\bm{s}_{T}(t,\bm{x}) \stackrel{\eqref{defScoreFiniteTime}}{=} \bm{\Sigma}\left(t,\bm{x}\right)\nabla_{\bm{x}}\log h_{T}(t,\bm{x}).
\label{StaticLinearSystem}
\end{align}
For instance, this implies that when $\bm{G}$ is wide ($n<m$) and has full row rank $\forall(t,\bm{x})\in\mathcal{I}\times\mathcal{X}$, then the set of all certified controllers are of the form 
$$\bm{u}_{\rm{part}}(t,\bm{x}) + \bm{v}(t,\bm{x}),$$ for a particular (e.g., minimum norm) solution $\bm{u}_{\rm{part}}$ satisfying $\bm{G}(t,\bm{x})\bm{u}_{\rm{part}}(t,\bm{x})=\bm{s}_{T}(t,\bm{x})$, and any $\bm{v}(t,\bm{x})\in\mathcal{N}\left(\bm{G}(t,\bm{x})\right)$, the nullspace of $\bm{G}(t,\bm{x})$.



\section{Controlled Set Invariance for $\mathcal{I}=
[0,\infty)$}\label{sec:infinitetimesetinvariance}

\subsection{Conditioned diffusion}\label{subsec:InfinitetimeConditionedDiffusion}
To generalize the prescribed finite time controlled set invariance results to $T\uparrow \infty$, the main idea is to write the $h_T$ in \eqref{finiteTimeDirichletPDEBVP} as an asymptotic expansion in that limit. Specifically,
\begin{align}
h_{\infty}(t,\bm{x}):= \displaystyle\lim_{T\uparrow\infty}h_{T}\left(t,\bm{x}\right)=e^{\lambda_0t}\psi_0(\bm{x})+ o\left(e^{\lambda_0t}\right)
\label{AsymptoticExpansion}    
\end{align}
uniformly in $\mathcal{X}$, where $\left(\lambda_0,\psi_0\right)$ is the \emph{principal Dirichlet eigenvalue-eigenfunction pair} for the operator ($-\gen$). 

In general, the \emph{Dirichlet eigen-problem} for ($-\gen$) is
\begin{subequations}
\begin{align}
&\left(-\gen\right)[\psi(\bm{x})] = \lambda \psi(\bm{x}) \quad\forall\:\bm{x}\in\mathcal{X},\label{infiniteTimePDE}\\
&\psi(\bm{x})=0, \quad\forall \bm{x}\in \partial\mathcal{X},\label{infiniteTimeBC}
\end{align}
\label{infiniteTimeDirichletEigProb}
\end{subequations}
wherein $(\lambda,\psi)$ is the corresponding eigenvalue-eigenfunction pair. Under assumptions \textbf{A1}-\textbf{A3}, the following facts about the eigen-problem \eqref{infiniteTimeDirichletEigProb} are well-known.
\begin{itemize}
\item[(i)] Problem \eqref{infiniteTimeDirichletEigProb} admits at most countable spectrum.

\item[(ii)] For \eqref{infiniteTimeDirichletEigProb}, there exists an eigenvalue $\lambda_0\in\mathbb{R}$, called \emph{principal eigenvalue}, such that for any $\lambda\in\mathbb{C}$ solving \eqref{infiniteTimeDirichletEigProb}, we have
\begin{align}
\lambda_0 \leq {\mathrm{Real}}\left(\lambda\right).
\label{defPrincipalEigValue}    
\end{align}

\item[(iii)] The principal eigenvalue $\lambda_0\in\mathbb{R}$ is simple.

\item[(iv)] The principal eigenfunction $\psi_0(\bm{x})>0$ for all $\bm{x}\in\mathcal{X}$.

\item[(v)] The function $\psi_0\in H^{1}_{0}\left(\mathcal{X}\right):= \{\psi\in L^{2}\left(\mathcal{X}\right) \mid \psi(\bm{x}) = 0\:\forall\bm{x}\in\partial\mathcal{X}, D^{1}\psi\in L^{2}\left(\mathcal{X}\right)\}$, the Sobolev space of
square integrable functions on $\mathcal{X}$ which vanish on $\partial\mathcal{X}$, and whose first order weak derivatives w.r.t. $\bm{x}\in\mathcal{X}$, denoted by $D^{1}$, are square integrable on $\mathcal{X}$.

\end{itemize}
For (i), see \cite[Ch. 6.2, Thm. 5]{evans2022partial}, \cite[Thm. 8.6]{gilbarg1977elliptic}. For (ii)-(iv), see \cite[Ch. 6.5.2, Thm. 3]{evans2022partial}. For bounded smooth $\mathcal{X}$, these properties follow from the Krein-Rutman theory \cite{krein1948linear}. They have been extended for $\mathcal{X}$ bounded non-smooth \cite{berestycki1994principal}, or unbounded \cite{berestycki2015generalizations}. For (v), see \cite{gilbarg1977elliptic}.

A consequence of \eqref{AsymptoticExpansion} is that
\begin{align}
\nabla_{\bm{x}}\log h_{\infty}(t,\bm{x})= \nabla_{\bm{x}}\log\psi_{0}\left(\bm{x}\right),
\label{LimitOfGradLog}
\end{align}
and thanks to the facts (iv)-(v), $\nabla_{\bm{x}}\log\psi_{0}$ is well-defined.
Note in particular that even though $h_{\infty}$ depends on $(t,\bm{x})$, its logarithmic gradient has no explicit dependence on $t$. The next result shows that \eqref{LimitOfGradLog} determines the conditioned diffusion process that renders $\mathcal{X}$ invariant over time interval $\mathcal{I}=[0,\infty)$.

\begin{theorem}\label{Thm:GivenInFiniteTimeInvariance}(\textbf{Conditioned process for infinite time set invariance})
Consider an It\^{o} diffusion $\bm{x}_t$ satisfying \eqref{UnconditionedSDE} with associated generator $\gen$ given by \eqref{defGenerator}, where $\bm{f},\bm{\sigma}$ satisfy assumptions \textbf{A1}, \textbf{A2}. Given $\mathcal{X}\subset\mathbb{R}^{n}$ satisfying \textbf{A3}, the process $\bm{x}_t$ starting from an initial condition $\bm{x}_{0}\in\mathcal{X}$ and conditioned to stay in $\mathcal{X}$ a.s. $\forall t\in[0,\infty)$, is the It\^{o} diffusion
\begin{align}
\differential\widetilde{\bm{x}}_t &= \left(\bm{f}\left(t,\widetilde{\bm{x}}_t\right) + \bm{\Sigma}\left(t,\widetilde{\bm{x}}_t\right)\nabla_{\widetilde{\bm{x}}_t}\log \psi_{0}(\widetilde{\bm{x}}_t)\right)\differential t \nonumber\\
&\qquad\qquad\qquad+ \bm{\sigma}\left(t,\widetilde{\bm{x}}_t\right)\differential \bm{w}_t, \quad \widetilde{\bm{x}}_0 := \bm{x}_0,
\label{InFiniteTimeSDE} \end{align}
where $\psi_0\in\mathcal{C}^{2}\left(\mathcal{X}\right)$ is the principal eigenfunction of $\left(-\gen\right)$. In other words, $\psi_0$ solves the Dirichlet eigen-problem:
\begin{subequations}
\begin{align}
&\left(-\gen\right)[\psi_0(\bm{x})] = \lambda_0 \psi_0(\bm{x}) \quad\forall\:\bm{x}\in\mathcal{X},\label{infiniteTimePDE}\\
&\psi_0(\bm{x})=0, \quad\forall \bm{x}\in \partial\mathcal{X},\label{infiniteTimeBC}
\end{align}
\label{infiniteTimeDirichletPDEBVP}
\end{subequations}
where $\lambda_0$ is the principal eigenvalue (in the sense of \eqref{defPrincipalEigValue}) for $\left(-\gen\right)$.
\end{theorem}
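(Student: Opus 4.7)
The plan is to derive Theorem~\ref{Thm:GivenInFiniteTimeInvariance} as the $T\uparrow\infty$ limit of Theorem~\ref{Thm:GivenFiniteTimeInvariance}, using the asymptotic expansion \eqref{AsymptoticExpansion} to identify the limiting drift correction with the logarithmic gradient of the principal Dirichlet eigenfunction $\psi_0$.

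First I would apply Theorem~\ref{Thm:GivenFiniteTimeInvariance} at each finite horizon $T$: the process $\bm{x}_t$ starting at $\bm{x}_0\in\mathcal{X}$ and conditioned on $\tau_{\mathcal{X}}>T$ is the It\^{o} diffusion \eqref{FiniteTimeSDE} with drift correction $\bm{\Sigma}\nabla_{\bm{x}}\log h_T$. Next I would pass to the limit $T\uparrow\infty$ via \eqref{AsymptoticExpansion}. Since the leading time-only prefactor $e^{\lambda_0 t}$ drops out of the spatial logarithmic gradient, one obtains \eqref{LimitOfGradLog}, namely $\nabla_{\bm{x}}\log h_\infty(t,\bm{x})=\nabla_{\bm{x}}\log\psi_0(\bm{x})$, which crucially has no explicit $t$-dependence. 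Facts (iv)--(v) together with standard elliptic regularity for \eqref{infiniteTimePDE} give $\psi_0\in\mathcal{C}^2(\mathcal{X})$ with $\psi_0>0$ throughout $\mathcal{X}$, so this logarithmic gradient is classically well-defined. Substituting \eqref{LimitOfGradLog} into \eqref{FiniteTimeSDE} immediately yields the target SDE \eqref{InFiniteTimeSDE}.

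To verify that $\psi_0$ is indeed the principal Dirichlet eigenfunction solving \eqref{infiniteTimeDirichletPDEBVP}, I would plug the leading-order ansatz $h_\infty(t,\bm{x})=e^{\lambda_0 t}\psi_0(\bm{x})$ into the space-time harmonic PDE \eqref{finiteTimePDE} of Theorem~\ref{Thm:GivenFiniteTimeInvariance}. Because $\gen$ differentiates only in $\bm{x}$, it commutes with $e^{\lambda_0 t}$, so
\begin{align*}
\frac{\partial}{\partial t}\bigl(e^{\lambda_0 t}\psi_0\bigr)+\gen\bigl[e^{\lambda_0 t}\psi_0\bigr] \;=\; e^{\lambda_0 t}\bigl(\lambda_0\psi_0+\gen[\psi_0]\bigr)\;=\;0,
\end{align*}
which gives $(-\gen)[\psi_0]=\lambda_0\psi_0$ on $\mathcal{X}$. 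The zero Dirichlet boundary condition for $\psi_0$ is inherited directly from \eqref{finiteTimeBC}. Principality of $\lambda_0$ (fact (ii)) combined with positivity of $\psi_0$ (fact (iv)) is exactly what singles this mode out as the dominant contribution to \eqref{AsymptoticExpansion} and rules out spurious cancellations from higher eigenmodes.

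The main obstacle is rigorously justifying the expansion \eqref{AsymptoticExpansion} itself --- i.e., why the long-time behavior of $h_T$ is dominated by the principal Dirichlet mode with an $o(e^{\lambda_0 t})$ remainder uniform in $\mathcal{X}$. This is a spectral-gap argument together with Krein--Rutman positivity for the Dirichlet semigroup of $(-\gen)$ on $\mathcal{X}$, and is the classical content of the theory of quasi-stationary distributions and $Q$-processes (Yaglom limits); for bounded Lipschitz $\mathcal{X}$ with uniformly elliptic $\gen$ (guaranteed by \textbf{A2}--\textbf{A3}), this can be invoked from \cite{pinsky1985convergence}. A secondary point, inherited from Theorem~\ref{Thm:GivenFiniteTimeInvariance}, is the a.s.\ invariance of \eqref{InFiniteTimeSDE} on $[0,\infty)$: intuitively, $\psi_0$ vanishes on $\partial\mathcal{X}$ with nonzero inward normal derivative (Hopf lemma), so the added drift $\bm{\Sigma}\nabla\log\psi_0$ blows up as $\widetilde{\bm{x}}_t$ approaches the boundary and repels it from $\partial\mathcal{X}$ strongly enough to preclude exit on any finite window, which by union bound extends to $[0,\infty)$.
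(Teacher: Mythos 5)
Your proposal is correct and follows essentially the same route as the paper's Appendix C: pass to the $T\uparrow\infty$ parametric limit of Theorem~\ref{Thm:GivenFiniteTimeInvariance}, use \eqref{AsymptoticExpansion} to interchange the limit and the gradient and obtain \eqref{LimitOfGradLog}, and verify $\psi_0$ solves \eqref{infiniteTimeDirichletPDEBVP} by substituting the leading-order mode into the space-time harmonic PDE and inheriting the boundary condition from \eqref{finiteTimeBC}. Your additional remarks (spectral-gap/Krein--Rutman justification of the expansion, and the Hopf-lemma heuristic for boundary repulsion) are supplementary intuition rather than a different argument.
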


The proof of Theorem \ref{Thm:GivenInFiniteTimeInvariance} in Appendix \ref{App:ProofThm:InfiniteTimeInvariance} shows that the infinite horizon conditioned diffusion \eqref{InFiniteTimeSDE} arises as the
$T\uparrow\infty$ parametric limit of the fixed horizon conditioned diffusion \eqref{FiniteTimeSDE}. As a consequence, \eqref{InFiniteTimeSDE} also guarantees the a.s. invariance for the same set $\mathcal{X}$ for any fixed $T<\infty$. 

In Sec. \ref{subsec:numerical_InfiniteHorizon}, we will exemplify the solution of \eqref{infiniteTimeDirichletPDEBVP} using both analytical and numerical methods.


\subsection{From conditioned diffusion to controller}\label{subsec:infinitetimeScoreVectorFieldAndController}

Akin to the fixed $T<\infty$ case, the conditioned diffusion \eqref{InFiniteTimeSDE} differs from the prior dynamics \eqref{UnconditionedSDE} only by an additional drift \begin{align}
\bm{s}_{\infty}(t,\bm{x}) := \bm{\Sigma}\left(t,\bm{x}\right)\nabla_{\bm{x}}\log \psi_{0}(\bm{x}) \quad\forall (t,\bm{x})\in [0,\infty)\times \mathcal{X}.
\label{defScoreInfiniteTime}
\end{align}
The score vector field $\bm{s}_{\infty}$ in \eqref{defScoreInfiniteTime} is the infinite horizon analogue of $\bm{s}_{T}$ in \eqref{defScoreFiniteTime}. The infinite horizon variant of Theorem \ref{Thm:ControllerFiniteTime} follows.

\begin{theorem}[\textbf{Controller for infinite time set invariance}]\label{Thm:ControllerInfiniteTime}
Given the drift and diffusion coefficients coefficients $\bm{f},\bm{\sigma}$ satisfying \textbf{A1}, \textbf{A2}, the set $\mathcal{X}\subset\mathbb{R}^{n}$ satisfying \textbf{A3}, let $\psi_0$ be the principal Dirichlet eigenfunction solving \eqref{infiniteTimeDirichletPDEBVP}. Let $\bm{s}_{\infty}$ be given by \eqref{defScoreInfiniteTime}, and let $\mathcal{R}(\cdot)$ denote ``the range of". The necessary and sufficient condition for the controlled diffusion \eqref{ControlledSDE} to meet controlled set invariance with problem data $\left(\bm{f},\bm{\sigma},\mathcal{X},\mathcal{I}=[0,\infty)\right)$ is 
\begin{align}
\bm{s}_{\infty}\in\mathcal{R}\left(\bm{G}\right)\quad\forall(t,\bm{x})\in\mathcal{I}\times\mathcal{X}.
\label{InTheRangeInfiniteTime}    
\end{align}
\end{theorem}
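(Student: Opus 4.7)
My plan is to follow the template of Theorem~\ref{Thm:ControllerFiniteTime}, using Theorem~\ref{Thm:GivenInFiniteTimeInvariance} as the structural input that characterizes the unique diffusion from the prior family \eqref{UnconditionedSDE} which almost surely stays in $\mathcal{X}$ over $[0,\infty)$. A term-by-term comparison of \eqref{InFiniteTimeSDE} against \eqref{ControlledSDE} shows that the only discrepancy lies in the drift, so realizing the conditioned diffusion as a closed-loop instance of \eqref{ControlledSDE} reduces to the pointwise linear system $\bm{G}(t,\bm{x})\bm{u}(t,\bm{x})=\bm{s}_{\infty}(t,\bm{x})$, which is solvable in $\bm{u}$ if and only if the range condition \eqref{InTheRangeInfiniteTime} holds.

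For sufficiency, I will assume \eqref{InTheRangeInfiniteTime} and construct a controller by taking any measurable selection of the pointwise solution set of $\bm{G}\bm{u}=\bm{s}_{\infty}$ (for instance, the minimum-norm choice $\bm{u}=\bm{G}^{\dagger}\bm{s}_{\infty}$). With this $\bm{u}$, the controlled SDE \eqref{ControlledSDE} reproduces \eqref{InFiniteTimeSDE} coefficient-by-coefficient, and Theorem~\ref{Thm:GivenInFiniteTimeInvariance} certifies that the closed-loop process stays in $\mathcal{X}$ almost surely for all $t\geq 0$. For necessity, I will assume some Markovian feedback $\bm{u}$ achieves a.s.\ invariance over $[0,\infty)$ and push this down to finite horizons: since $[0,T]\subset[0,\infty)$ for every $T<\infty$, the same $\bm{u}$ also achieves a.s.\ invariance over $[0,T]$, so the necessity clause of Theorem~\ref{Thm:ControllerFiniteTime} (applied with $\mathcal{X}_{T}=\mathcal{X}$) forces $\bm{s}_{T}(t,\bm{x})\in\mathcal{R}(\bm{G}(t,\bm{x}))$ on $[0,T]\times\mathcal{X}$. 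Fixing $(t,\bm{x})$ and sending $T\uparrow\infty$, the asymptotic expansion \eqref{AsymptoticExpansion} together with its consequence \eqref{LimitOfGradLog} yields $\bm{s}_{T}(t,\bm{x})\to\bm{s}_{\infty}(t,\bm{x})$, and closedness of the finite-dimensional subspace $\mathcal{R}(\bm{G}(t,\bm{x}))\subseteq\mathbb{R}^{n}$ places the limit inside it, giving \eqref{InTheRangeInfiniteTime}.

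The main subtlety I anticipate is rigorously justifying the interchange of the logarithmic gradient with the $T\uparrow\infty$ limit at the level needed to invoke \eqref{LimitOfGradLog} pointwise in $(t,\bm{x})$. This hinges on the strict positivity and simplicity of the principal Dirichlet pair $(\lambda_{0},\psi_{0})$ guaranteed by facts (ii)--(iv) of Sec.~\ref{subsec:InfinitetimeConditionedDiffusion}, combined with the $H^{1}_{0}(\mathcal{X})$ regularity from fact (v) and interior elliptic regularity for \eqref{infiniteTimeDirichletPDEBVP}; these upgrade the uniform convergence in \eqref{AsymptoticExpansion} to $C^{1}_{\mathrm{loc}}$ convergence on compact subsets of $\mathcal{X}$, legitimizing the gradient/limit exchange and completing the necessity argument. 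A secondary technical point is ensuring the selected $\bm{u}$ is regular enough that the closed-loop SDE admits a strong solution; this is handled by the same well-posedness that underwrites Theorem~\ref{Thm:GivenInFiniteTimeInvariance} together with assumption \textbf{A2}, which tames any boundary behavior of $\nabla_{\bm{x}}\log\psi_{0}$.
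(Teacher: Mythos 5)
Your proposal is correct, and the sufficiency half coincides with the paper's argument: the paper's proof is a one-liner that equates the closed-loop process \eqref{ControlledSDE} with the conditioned process \eqref{InFiniteTimeSDE} and reads off the solvability of $\bm{G}\bm{u}=\bm{s}_{\infty}$ as the necessary and sufficient condition. Where you genuinely diverge is in the necessity direction. The paper obtains necessity from the same ``equate the two SDEs'' step, which implicitly leans on the conditioned diffusion being the unique member of the prior family achieving a.s.\ invariance. You instead restrict the hypothetical infinite-horizon controller to each finite window $[0,T]$, invoke the necessity clause of Theorem~\ref{Thm:ControllerFiniteTime} with $\mathcal{X}_{T}=\mathcal{X}$ to get $\bm{s}_{T}(t,\bm{x})\in\mathcal{R}\left(\bm{G}(t,\bm{x})\right)$ for every $T$, and pass to the limit using \eqref{AsymptoticExpansion}--\eqref{LimitOfGradLog} plus closedness of the finite-dimensional subspace $\mathcal{R}\left(\bm{G}(t,\bm{x})\right)$. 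This buys you a necessity argument that does not require re-litigating uniqueness of the infinite-horizon conditioned process, at the cost of needing the gradient/limit interchange $\nabla_{\bm{x}}\log h_{T}\to\nabla_{\bm{x}}\log\psi_{0}$ pointwise; you correctly flag this as the delicate step, and it is the same interchange the paper itself assumes (with a uniform-convergence hypothesis and a citation to Rudin) in the proof of Theorem~\ref{Thm:GivenInFiniteTimeInvariance}, so your elliptic-regularity upgrade is, if anything, more careful than what the paper records. Both routes land on the same linear-algebraic characterization \eqref{StaticLinearSystemInfinitehorizon}.
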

Similar to the role played by Theorem \ref{Thm:ControllerFiniteTime} for $T<\infty$, Theorem \ref{Thm:ControllerInfiniteTime} can be used to provably falsify or certify a.s. infinite horizon set invariance. In the event of certification, the result is constructive in that a controller can be computed as a solution of the static linear system
\begin{align}
\bm{G}(t,\bm{x})\bm{u}(t,\bm{x})=\bm{s}_{\infty}(t,\bm{x}) \stackrel{\eqref{defScoreInfiniteTime}}{=} \bm{\Sigma}\left(t,\bm{x}\right)\nabla_{\bm{x}}\log \psi_{0}(\bm{x}).
\label{StaticLinearSystemInfinitehorizon}
\end{align}
An important difference compared to $T<\infty$ case is that the gradient in \eqref{StaticLinearSystemInfinitehorizon} depends on $\bm{x}$ but not on $t$. If $\bm{G}$ and $\bm{\sigma}$ (hence $\bm{\Sigma}$) have no explicit dependence on $t$, then \eqref{StaticLinearSystemInfinitehorizon} implies that $\bm{u}$ must be a time-invariant state feedback.

\section{Inverse Optimality}\label{sec:inverseoptimality}
So far we have shown that the proposed two step computation constructively certifies or falsifies controlled set invariance for It\^{o} diffusions. The qualifier ``constructively" means that in the certification case, a Markovian control policy is also derived. However, it is unclear \emph{a priori} if the controller thus derived, has any performance guarantee in the sense of minimizing a cost-to-go over the given time interval. 

In the following, we provide the inverse optimality guarantee that for fixed $T<\infty$, the proposed controller is in fact the minimum effort controller for a stochastic optimal control problem that we derive. The proof of Theorem \ref{Thm:InverseOptimalityFiniteHorizon} appears in Appendix \ref{App:ProofInverseOptimalityThmFiniteHorizon}.

\begin{theorem}[Inverse optimality]\label{Thm:InverseOptimalityFiniteHorizon}
Given $T<\infty$ and tuple $\left(\bm{f},\bm{G},\bm{\sigma},\mathcal{X},[0,T]\right)$ such that $\bm{f},\bm{\sigma}$ satisfy assumptions \textbf{A1}-\textbf{A2}, and $\mathcal{X}$ satisfies \textbf{A3}. Let $V(t,\bm{x})\in\mathcal{C}^{1,2}\left([0,T];\mathcal{X}\right)$ be the value function for the stochastic optimal control problem
\begin{subequations}
\begin{align}
&\underset{{\mathrm{Markovian}}\,\bm{u}:\mathcal{I}\times\mathcal{X}\mapsto\mathbb{R}^{m}}{\arg\min}\int_{0}^{T}\mathbb{E}_{\bm{x}_{t}^{\bm{u}} }\left[q\left(\bm{x}_{t}^{\bm{u}}\right)+\dfrac{1}{2}\|\bm{u}\|_2^2 \right]\:\differential t\label{StochasticOCPobjective}\\
&\qquad\,\mathrm{subject\;to}\qquad{\mathrm{controlled\; SDE}\;}\eqref{ControlledSDE},\\
&\qquad\qquad\qquad\qquad\;\;\mathrm{given}\; \bm{x}_{0}^{\bm{u}}\in\mathcal{X},\label{InverseOptimalitySetContainment}
\end{align}
\label{StochasticOCP}
\end{subequations}
where
\begin{align}
    q(\bm{x}) := \begin{cases} 0 & \textrm{ if } \bm{x} \in \mathcal{X},\\ 
    \infty & \textrm{ if } \bm{x} \in \partial\mathcal{X}.
    \end{cases}
\label{state_cost}
\end{align}
If $\bm{\Sigma}\nabla_{\bm{x}}V(t,\bm{x})$ is in the range of $\bm{G}$, and $\bm{G}\bm{G}^{\top}=\bm{\Sigma}$ for all $(t,\bm{x})\in [0,T]\times\mathcal{X}$, then the optimal control $\bm{u}^{\mathrm{opt}}=\bm{G}^{\top}\nabla_{\bm{x}}V$ guarantees a.s. controlled set invariance for \eqref{ControlledSDE} w.r.t. the set $\mathcal{X}$ over $[0,T]$.
\end{theorem}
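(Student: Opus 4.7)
The plan is to derive the Hamilton--Jacobi--Bellman (HJB) PDE associated with the stochastic optimal control problem \eqref{StochasticOCP}, perform the pointwise quadratic minimization over $\bm{u}$, and then apply a Hopf--Cole (logarithmic) transformation to recognize the resulting PDE as the Dirichlet BVP \eqref{finiteTimeDirichletPDEBVP} of Theorem \ref{Thm:GivenFiniteTimeInvariance}. This will identify the optimal controller with the score-based controller of Theorem \ref{Thm:ControllerFiniteTime}, so that a.s.\ set invariance follows directly from the latter.

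First, I would write the dynamic programming equation for $V$,
\begin{align*}
\frac{\partial V}{\partial t} + \min_{\bm{u}\in\mathbb{R}^{m}}\!\left\{q(\bm{x}) + \tfrac{1}{2}\|\bm{u}\|_2^2 + \langle \bm{f}+\bm{G}\bm{u},\nabla_{\bm{x}}V\rangle\right\} + \tfrac{1}{2}\langle\bm{\Sigma},\hess_{\bm{x}}V\rangle = 0,
\end{align*}
on $[0,T]\times\mathcal{X}$, with terminal condition $V(T,\bm{x})=0$ for $\bm{x}\in\mathcal{X}$. The inner minimization is strictly convex quadratic in $\bm{u}$, yielding the pointwise minimizer (up to the standard sign convention) $\bm{u}^{\mathrm{opt}} = -\bm{G}^{\top}\nabla_{\bm{x}}V$, which is well-defined as a Markovian feedback by the range hypothesis $\bm{\Sigma}\nabla_{\bm{x}}V\in\mathcal{R}(\bm{G})$. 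Substituting this back, and using the assumption $\bm{G}\bm{G}^{\top}=\bm{\Sigma}$, the HJB collapses to the viscous Hamilton--Jacobi equation
\begin{align*}
\frac{\partial V}{\partial t} + \gen[V] - \tfrac{1}{2}\langle \bm{\Sigma}\nabla_{\bm{x}}V,\nabla_{\bm{x}}V\rangle + q(\bm{x}) = 0.
\end{align*}

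Next, I would linearize via the Hopf--Cole transform $h(t,\bm{x}):=\exp(-V(t,\bm{x}))$. A direct computation shows that the quadratic term $-\tfrac{1}{2}\langle\bm{\Sigma}\nabla V,\nabla V\rangle$ is exactly cancelled by the cross term produced by $\hess_{\bm{x}}(-\log h)$, reducing the nonlinear PDE to $\partial_{t}h + \gen[h] = q(\bm{x})\,h$. On the interior $\mathcal{X}$ we have $q\equiv 0$, matching \eqref{finiteTimePDE}, while the singularity $q\equiv+\infty$ on $\partial\mathcal{X}$ forces $h\equiv 0$ on $[0,T)\times\partial\mathcal{X}$, matching \eqref{finiteTimeBC}; the terminal condition $V(T,\cdot)=0$ on $\mathcal{X}$ yields $h(T,\cdot)=1$, matching \eqref{InitialCondition}. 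Uniqueness in Theorem \ref{Thm:GivenFiniteTimeInvariance} then gives $h\equiv h_{T}$, so $V = -\log h_{T}$.

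Finally, translating back, $\bm{u}^{\mathrm{opt}} = -\bm{G}^{\top}\nabla_{\bm{x}}V = \bm{G}^{\top}\nabla_{\bm{x}}\log h_{T}$, and using $\bm{G}\bm{G}^{\top}=\bm{\Sigma}$ we obtain
\begin{align*}
\bm{G}\bm{u}^{\mathrm{opt}} = \bm{\Sigma}\nabla_{\bm{x}}\log h_{T} = \bm{s}_{T},
\end{align*}
so $\bm{u}^{\mathrm{opt}}$ solves the static linear system \eqref{StaticLinearSystem}. The range hypothesis $\bm{\Sigma}\nabla_{\bm{x}}V\in\mathcal{R}(\bm{G})$ is exactly the certification condition \eqref{InTheRangeFiniteTime}, so invoking Theorem \ref{Thm:ControllerFiniteTime} concludes that $\bm{u}^{\mathrm{opt}}$ achieves a.s.\ controlled set invariance of $\mathcal{X}$ on $[0,T]$. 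The main obstacle I anticipate is handling the singular state cost $q$ rigorously at $\partial\mathcal{X}$: a clean route is to penalize with a bounded $q_{\varepsilon}$, derive the above identification for each $\varepsilon$, and pass $\varepsilon\downarrow 0$ together with a standard verification lemma (applying It\^{o}'s formula to $V(t,\bm{x}_{t}^{\bm{u}})$ on the set $\{t<\tau_{\mathcal{X}}\}$) to confirm that the candidate $V=-\log h_{T}\in\mathcal{C}^{1,2}([0,T];\mathcal{X})$ is indeed the value function rather than merely a classical solution of the HJB PDE.
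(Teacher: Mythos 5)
Your proof is essentially the same as the paper's: derive the HJB PDE, do the pointwise quadratic minimization, apply a logarithmic (Hopf--Cole/Fleming) transform to identify the HJB with the Dirichlet BVP \eqref{finiteTimeDirichletPDEBVP} via uniqueness, and invoke Theorem \ref{Thm:ControllerFiniteTime}. The only difference is a sign convention: you take the standard value function $V=-\log h_{T}\geq 0$ with minimizer $\bm{u}^{\mathrm{opt}}=-\bm{G}^{\top}\nabla_{\bm{x}}V$, while the paper's ``$V$'' is actually $\log h_{T}\leq 0$ so that $\bm{u}^{\mathrm{opt}}=+\bm{G}^{\top}\nabla_{\bm{x}}V$; both yield the same feedback $\bm{u}^{\mathrm{opt}}=\bm{G}^{\top}\nabla_{\bm{x}}\log h_{T}$, and you correctly flag this, so the argument goes through.
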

Similar results can be established for the infinite horizon case following the developments in \cite{sheu1984stochastic}. We will not pursue them here.


\section{Numerical Examples}\label{sec:numerics}
In Sec. \ref{subsec:numerical_FiniteHorizon}, we exemplify the a.s. controlled set invariance solutions for a given $T<\infty$. In Sec. \ref{subsec:numerical_InfiniteHorizon}, we provide examples for the case $T=\infty$.

\subsection{The case $\mathcal{I}=[0,T]$}\label{subsec:numerical_FiniteHorizon}
In the following, we give four examples for given $T<\infty$. \textbf{Examples 1} and \textbf{2} are instructive as they allow semi-analytic handles for the solution of the corresponding Dirichlet BVP \eqref{finiteTimeDirichletPDEBVP}, and thus on the computation of the control. \textbf{Example 3} highlights the effect of the input matrix. \textbf{Example 4} is more general in that the corresponding BVP is solved numerically via Feynman-Kac path integral computation.

For the above examples, we illustrate the closed-loop sample paths achieving the desired controlled set invariance. For doing so, we apply Euler-Maruyama scheme to the conditioned SDE \eqref{FiniteTimeSDE} with step-size $10^{-3}$, and $h_{T}$ solving the specific instance of \eqref{finiteTimeDirichletPDEBVP}.

\noindent\textbf{Example 1 (Prescribed time hyper-rectangle invariance with hyper-rectangular target set and zero drift).}\\
\noindent Let the vectors $\bm{\ell},\bm{b}\in\mathbb{R}^{n}_{>0}$ such that $\bm{0}<\bm{b}<\bm{\ell}$ (elementwise vector inequality). We consider the case when \eqref{ControlledSDE} is a controlled Brownian motion in $\mathbb{R}^{n}$ that is conditioned to stay within the given hyper-rectangle 
$$\mathcal{X}\equiv{\rm{Rect}}\left({\bm{0},\bm{\ell}}\right):=\{\bm{x}\in\mathbb{R}^{n} \mid \bm{0}<\bm{x}<\bm{\ell}\}$$ up until a given $T<\infty$, and the target set to hit at $t=T$ is
$$\mathcal{X}_{T}\equiv{\rm{Rect}}\left({\bm{0},\bm{b}}\right):=\{\bm{x}\in\mathbb{R}^{n} \mid \bm{0}<\bm{x}<\bm{b}\}.$$
In other words, the problem data are
\begin{align}
&\left(\bm{f},\bm{G},\bm{\sigma},\mathcal{X},\mathcal{X}_{T},\mathcal{I}\right)\nonumber\\
=&\left(\bm{0},\bm{I}_{n},\bm{I}_{n},{\rm{Rect}}\left({\bm{0},\bm{\ell}}\right),{\rm{Rect}}\left({\bm{0},\bm{b}}\right),[0,T]\right).   
\label{ProblemDataExample1}
\end{align}

In this case, the Dirichlet BVP given by \eqref{finiteTimePDE}, \eqref{finiteTimeBC} and \eqref{InitialConditionGoalSet} specializes to
\begin{subequations}
\begin{align}
&\dfrac{\partial h_T}{\partial t} + \frac{1}{2}\Delta_{\bm{x}} h_T=0 \quad\forall\left(t,\bm{x}\right)\in[0,T]\times{\rm{Rect}}\left({\bm{0},\bm{\ell}}\right),\label{Example1finiteTimePDE}\\
& h_T(T,\bm{x}) = \begin{cases}1& \text{if} \quad\bm{0}<\bm{x}<\bm{b},\\
    0 & \text{otherwise}, \end{cases}
\label{Example1InitialCondition}\\
&h_T(t,\bm{x}) = 0 \quad\forall\:(t,\bm{x})\in[0,T)\times\partial{\rm{Rect}}\left({\bm{0},\bm{\ell}}\right).\label{Example1finiteTimeBC}
\end{align}
\label{Example1finiteTimeDirichletPDEBVP}
\end{subequations}
The $h_{T}$ solving \eqref{Example1finiteTimeDirichletPDEBVP} can be computed by generalizing the separation-of-variables arguments in \cite[p. 38-55]{HabermanBook}, \cite[p. 262-263]{strauss2007partial} to $n$ dimensions. Specifically, we have
\begin{subequations}
\begin{align}
&h_{T}(t,\bm{x}) =\displaystyle\sum_{m_{1}=1}^{\infty} \displaystyle\sum_{m_{2}=1}^{\infty}\hdots\displaystyle\sum_{m_{n}=1}^{\infty}C_{m_{1}m_{2}\hdots m_{n}}\prod_{i=1}^{n} d_i,\label{hTFiniteHorizonRectangle}\\
&C_{m_{1}m_{2}\hdots m_{n}} :=\exp\left(-(T-t)\right)\dfrac{2^n}{\pi^n}\scalebox{1.35}{$\displaystyle\prod_{i=1}^{n}$}\dfrac{1 - \cos\left(\dfrac{\pi m_{i} b_{i}}{l_i}\right)}{m_{i}}, \label{CFiniteHorizonRectangle}\\
&d_i :=\sin\left(\dfrac{\pi m_{i} x_{i}}{l_i}\right)\exp\left(-\dfrac{\pi^2 m_i^2}{2 l_i^2}\right). \label{dFiniteHorizonRectangle}
\end{align}
\label{FiniteHorizonRectangle}    
\end{subequations}
With the above $h_{T}$, \eqref{BackoutToControl} and \eqref{defScoreFiniteTime} yield the unique controller
\begin{align}
\bm{u}(t,\bm{x}) = \bm{s}_{T}(t,\bm{x}) = \nabla_{\bm{x}}h_{T}(t,\bm{x}).
\label{uForBrownianPrior}    
\end{align}
Equations \eqref{FiniteHorizonRectangle}-\eqref{uForBrownianPrior} thus derived, together complete the proposed computational \textbf{steps 1-2}. As explained earlier, the controller \eqref{uForBrownianPrior} is correct-by-construction in the sense it guarantees controlled set invariance for problem data \eqref{ProblemDataExample1}.

Fig. \ref{fig:1d_finite} shows the solution of the corresponding set invariance problem in $n=1$ dimension with $\ell=\pi$, $b=\pi/3$, $\mathcal{I}=[0,3]$, i.e., $T=3$. In particular, Fig. \ref{fig:1d_finite}(a) depicts the contours of $h_{T}(t,x)$ in the $(t,x)$ plane. Fig. \ref{fig:1d_finite}(b) shows 20 sample paths for the controlled SDE \eqref{FiniteTimeSDE} achieving a.s. controlled set invariance for this 1D problem data.

Fig. \ref{fig:2d_contour} and Fig. \ref{fig:2d_traj_finite} illustrate the solution of the corresponding set invariance problem in $n=2$ dimensions with $\bm{\ell} = (2,2)^{\top}$, $\bm{b} = (1,1)^{\top}$, $\mathcal{I}=[0,1]$, i.e., $T=1$. In particular, Fig. \ref{fig:2d_contour} shows the contours of $h_{T}(t,\bm{x})$. Fig. \ref{fig:2d_traj_finite} shows the sample paths for the controlled SDE \eqref{FiniteTimeSDE} achieving a.s. controlled set invariance for this 2D problem data.

\begin{figure}[t]
    \centering
    \subfigure[Contour plot of $h_{T}(t,x)$ ]{
        \includegraphics[width=0.235\textwidth]{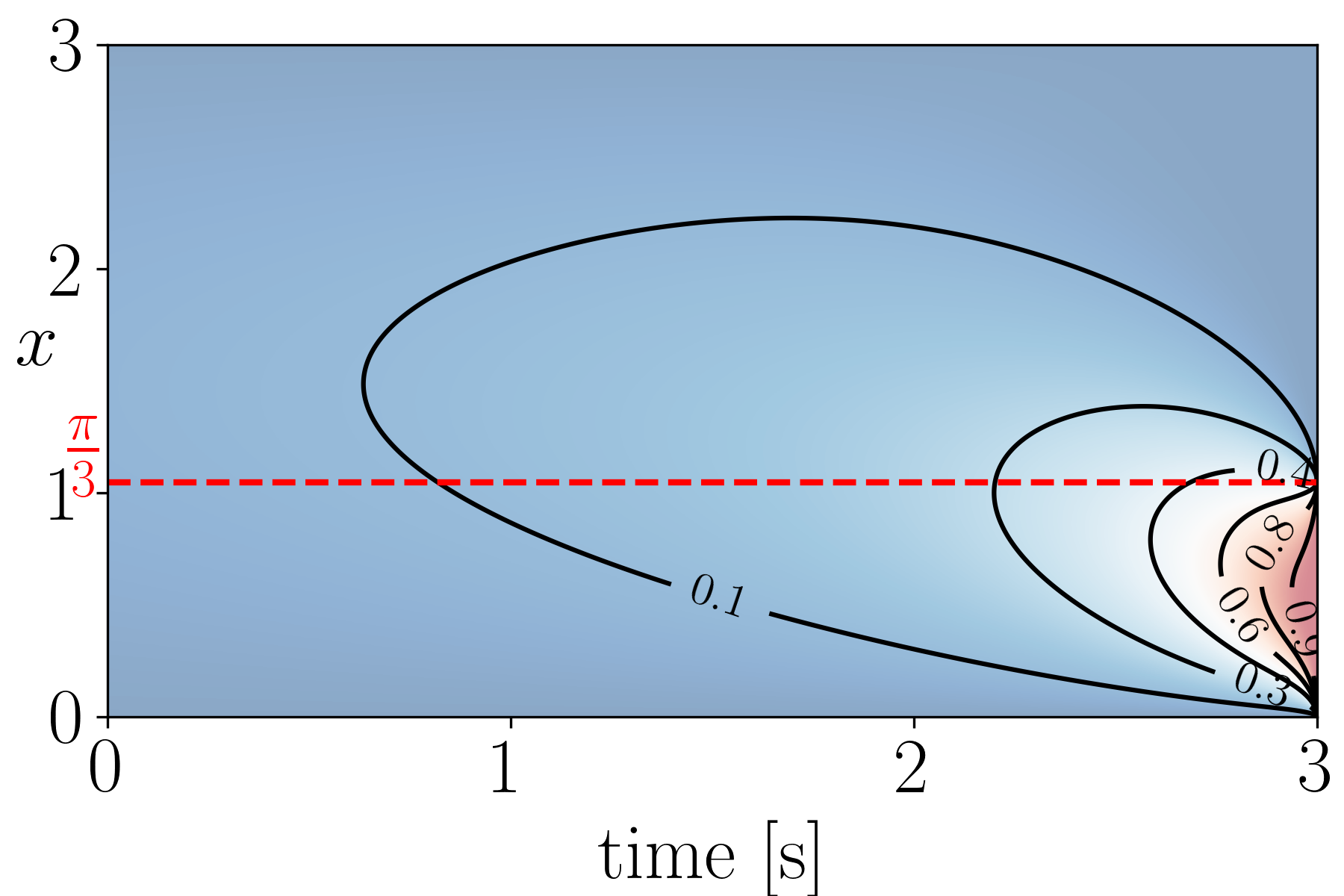}
    }
    \hspace{-11pt}
    \subfigure[20 controlled sample paths ]{
        \includegraphics[width=0.235\textwidth]{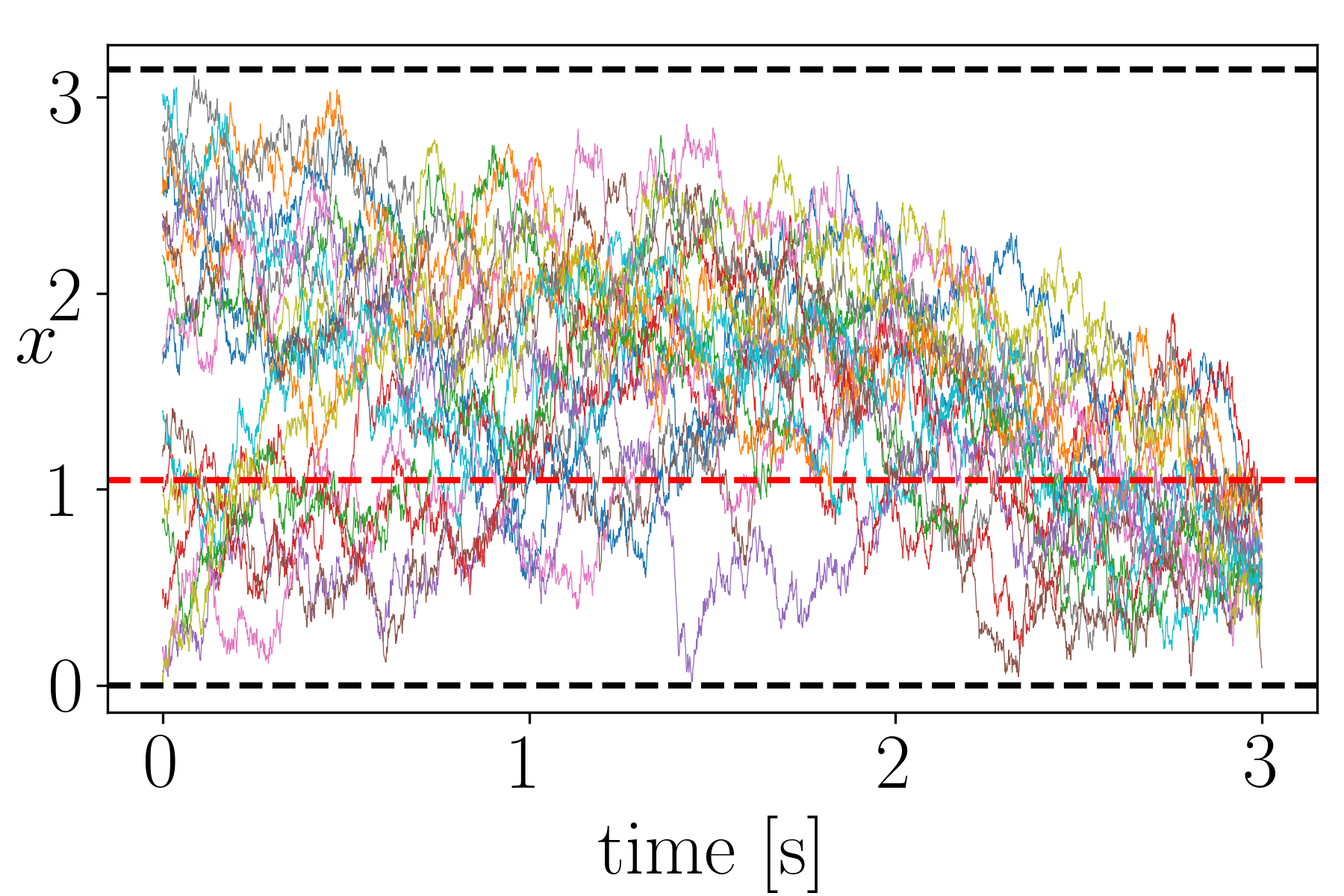}
    }
    \caption{Contour plot of $h_{T}$ and controlled sample paths for \textbf{Example 1} with $n=1$, $\ell=\pi$, $b=\pi/3$, $\mathcal{I}=[0,3]$.}
    \label{fig:1d_finite}
\end{figure}

\begin{figure*}[t]
    \centering      
    \includegraphics[width=\linewidth]{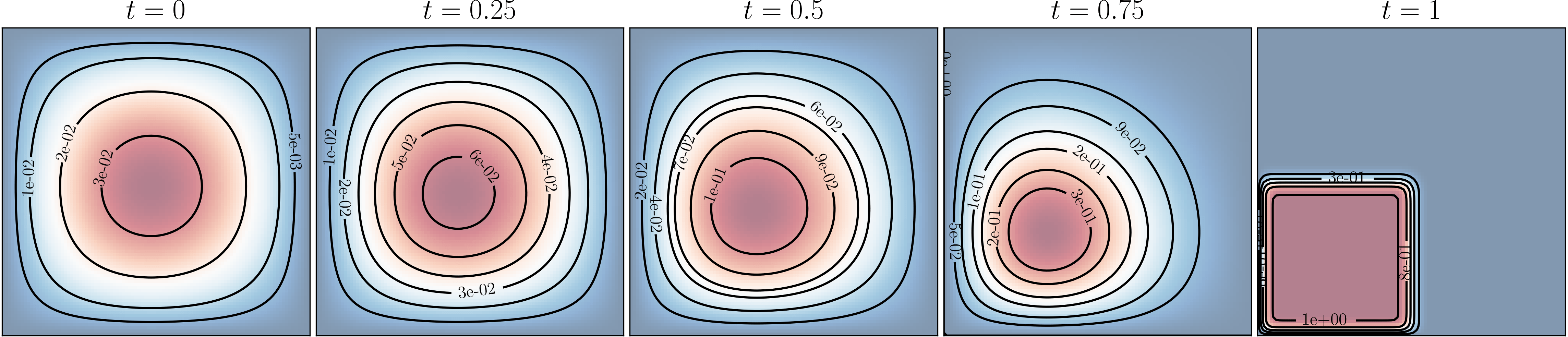}
    \caption{Contour snapshots of $h_{T}(t, \bm{x})$ for \textbf{Example 1} with $n=2$, $\bm{\ell} = (2,2)^{\top}$, $\bm{b} = (1,1)^{\top}$, $\mathcal{I}=[0,1]$. Each contour snapshot is plotted over the set $\mathcal{X}\equiv{\rm{Rect}}\left({\bm{0},\bm{\ell}}\right)$.}
    \label{fig:2d_contour}
\end{figure*}

\begin{figure}[htbp]
    \centering
    \includegraphics[width=\linewidth]{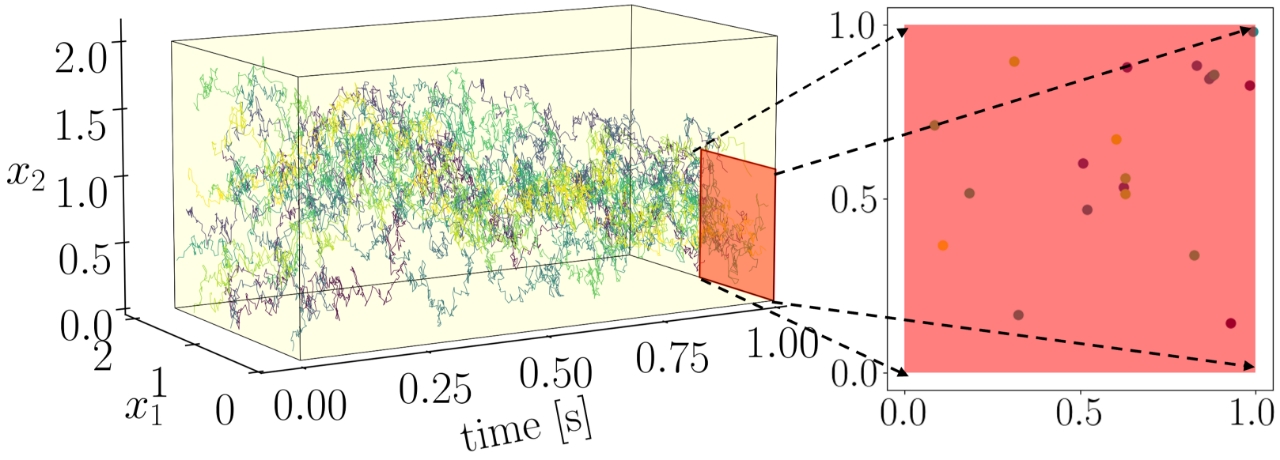}    
    \caption{20 controlled sample paths for \textbf{Example 1} with $n=2$, $\bm{\ell} = (2,2)^{\top}$, $\bm{b} = (1,1)^{\top}$, $\mathcal{I}=[0,1]$. The inset plot highlights the target set $\mathcal{X}_{T}\equiv{\rm{Rect}}\left({\bm{0},\bm{b}}\right)$ overlaid with the terminal states of the controlled sample paths.}
    \label{fig:2d_traj_finite}
\end{figure}

\noindent\textbf{Example 2 (Prescribed time disk invariance with annular target set and zero drift).}\\
\noindent For $0<r_1<r_2$, let
$$\mathcal{X}\equiv{\rm{Disk}}\left(\bm{0},r_2\right):=\{\bm{x}\in\mathbb{R}^{2} \mid \|\bm{x}\|_2 < r_2\},$$
and
$$\mathcal{X}_{T}\equiv{\rm{Annulus}}\left(r_1,r_2\right):=\{\bm{x}\in\mathbb{R}^{2} \mid r_1<\|\bm{x}\|_2 < r_2\}.$$
We consider the prior dynamics to be standard Brownian motion in $\mathbb{R}^{2}$ conditioned to render the disk $\mathcal{X}$ a.s. invariant up until given $T<\infty$, and to hit the annulus $\mathcal{X}_{T}$ at $t=T$. In other words, the problem data are
\begin{align}
&\left(\bm{f},\bm{G},\bm{\sigma},\mathcal{X},\mathcal{X}_{T},\mathcal{I}\right)\nonumber\\
=&\left(\bm{0},\bm{I}_{2},\bm{I}_{2},{\rm{Disk}}\left(\bm{0},r_2\right),{\rm{Annulus}}\left(r_1,r_2\right),[0,T]\right).
\label{ProblemDataExample2}
\end{align}

For convenience, we resort to polar coordinates, where the Dirichlet BVP given by \eqref{finiteTimePDE}, \eqref{finiteTimeBC} and \eqref{InitialConditionGoalSet} with this problem data specializes to
\begin{subequations}
\begin{align}
& \dfrac{\partial h_T}{\partial t}+\frac{1}{2}\left(\frac{\partial^2 h_T}{\partial r^2}+\frac{1}{r}\dfrac{\partial h_T}{\partial r}+\frac{1}{r^2}\frac{\partial^2 h_T}{\partial 
 \theta^2}\right)=0 \nonumber\\
 &\qquad\qquad\qquad\qquad\forall\:(t,r,\theta)\in[0,T]\times{\rm{Disk}}\left({\bm{0},r_2}\right),\label{Example2PDE}\\
 &h_T(T,r,\theta) = \begin{cases}1& \text{if} \quad(r,\theta)\in {\mathrm{Annulus}}(r_1,r_2),\\
    0 & \text{otherwise}, \end{cases}
\label{Example2InitialCondition}\\
&h_T(t,r,\theta) = 0 \quad\forall\:(t,r,\theta)\in[0,T)\times\partial{\rm{Disk}}\left({\bm{0},r_2}\right).\label{Example2finiteTimeBC}
\end{align}
\label{2DfiniteTimeDirichletPDEBVPAnnulus}
\end{subequations}
The solution for the BVP \eqref{2DfiniteTimeDirichletPDEBVPAnnulus} is (see Appendix \ref{Derivation of heat equation in annular set})
\begin{align}
    h_T(t,r,\theta)=\sum\limits_{k=1}^{\infty} c^0_{k}\phi^0_k(r)\exp\left(-\frac{1}{2}(T-t)\!\left(\dfrac{z^{0}_{k}}{r_2}\right)^{\!\!2}\right),
\label{hTExample2}
\end{align}
where $\phi^0_k(r) := J_0\left(\frac{z^{0}_{k} r}{r_2}\right)$ is the Bessel function of the first kind with order zero evaluated at $z^{0}_{k} r/r_2$, wherein \( z_k^0 \) denotes the \( k^\text{th} \) zero of \( J_0(\cdot) \); see e.g., \cite[Ch. 9.1, Ch. 9.5]{abramowitz1972handbook}. The $z_k^0$ are known to be simple and positive for all $k\in\mathbb{N}$, e.g., $z_1^{0} = 2.4048, z_2^{0} = 5.5201, z_{3}^{0} = 8.6537$, etc. The constants $c^0_{k}$ in \eqref{hTExample2} are (see Appendix \ref{Derivation of heat equation in annular set}) 
\begin{equation}\label{eq:coefficient caculation}
    c^0_k := \frac{ \int_{r_1}^{r_2} \phi^0_k(r) r\differential r }{ \int_{0}^{r_2} \left(\phi^0_k(r)\right)^2 r\differential r}.
\end{equation}
Notice that the solution \eqref{hTExample2} is radial, i.e., independent of $\theta$, as expected from the symmetry of the problem. With $h_{T}$ as in \eqref{hTExample2}, the unique correct-by-construction $\bm{u}(t,\bm{x})$ is again given by \eqref{uForBrownianPrior}. 

Fig. \ref{fig:2d_traj_annulus} compares 7 uncontrolled and controlled sample paths for problem data \eqref{ProblemDataExample2} with $r_1=1$, $r_2=2$, $T=1$.

\begin{figure}[t]
    \centering
\includegraphics[width=0.7\linewidth]{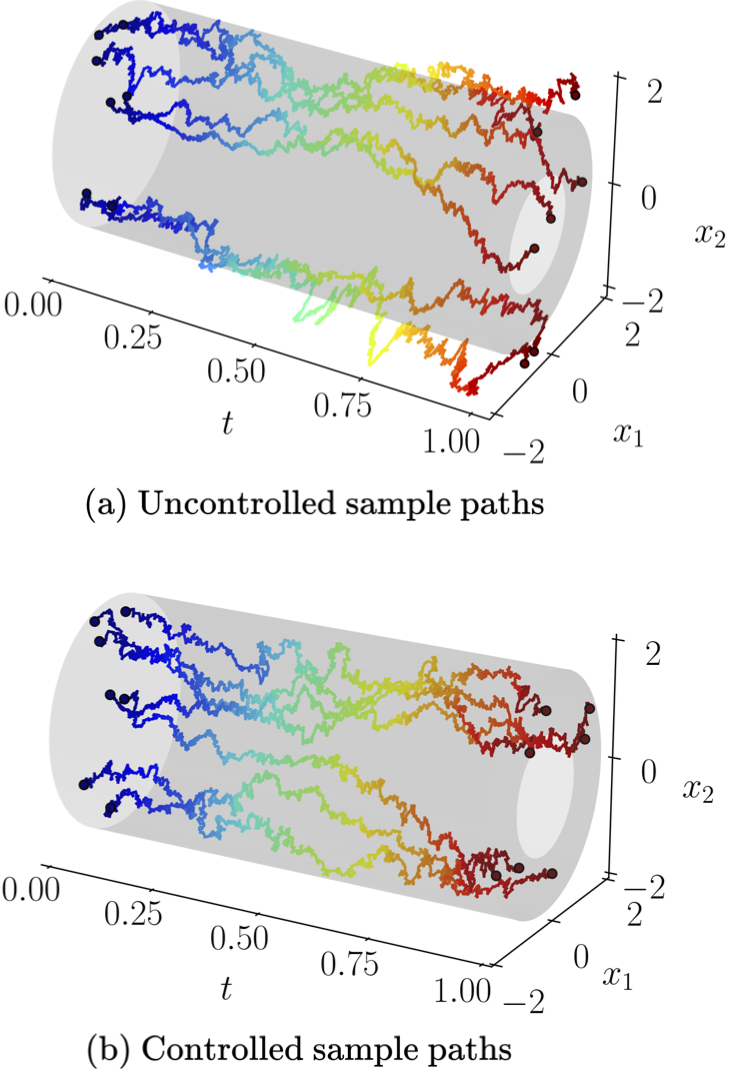}    
    \caption{(a) Uncontrolled and (b) controlled sample paths for \textbf{Example 2} with $r_1=1$, $r_2=2$, $\mathcal{I}=[0,1]$. With probability one, the controlled paths are guaranteed to be in $\mathcal{X}\equiv{\rm{Disk}}\left(\bm{0},r_2\right)$ till $T=1$, and to hit $\mathcal{X}_{T}\equiv{\rm{Annulus}}\left(r_1,r_2\right)$ at $T=1$. The color (blue to red) along each sample path indicates the progression of time.}
    \label{fig:2d_traj_annulus}
\end{figure}

\noindent\textbf{Example 3 (Effect of input matrix $\bm{G}$)}\\ 
\noindent To give falsification example, let us re-visit \textbf{Example 2} with the same problem data \eqref{ProblemDataExample2} except for single input, i.e., $\bm{G}(t,x_1,x_2)\in\mathbb{R}^{2\times 1}$. Then the computational \textbf{step 1}, and hence the score vector field $\bm{s}_{T}=\nabla_{\bm{x}}h_{T}$ remains the same as in \textbf{Example 2}. For \textbf{step 2}, we need to solve for $u\in\mathbb{R}$ from the static linear system 
$$\bm{G}(t,x_1,x_2)u(t,x_1,x_2) = \bm{s}_{T}(t,x_1,x_2).$$ 
If the given $\bm{G}\in\mathbb{R}^{2\times 1}$ is not collinear with the computed $\bm{s}_{T}$ for all $(t,x_1,x_2)$, then the controlled set invariance is falsified. Similar falsification follows for the two input case if $\bm{G}(t,x_1,x_2)\in\mathbb{R}^{2\times 2}$ is singular for some $(t,x_1,x_2)$.

\begin{figure*}[t]
    \centering      \includegraphics[width=0.96\textwidth]{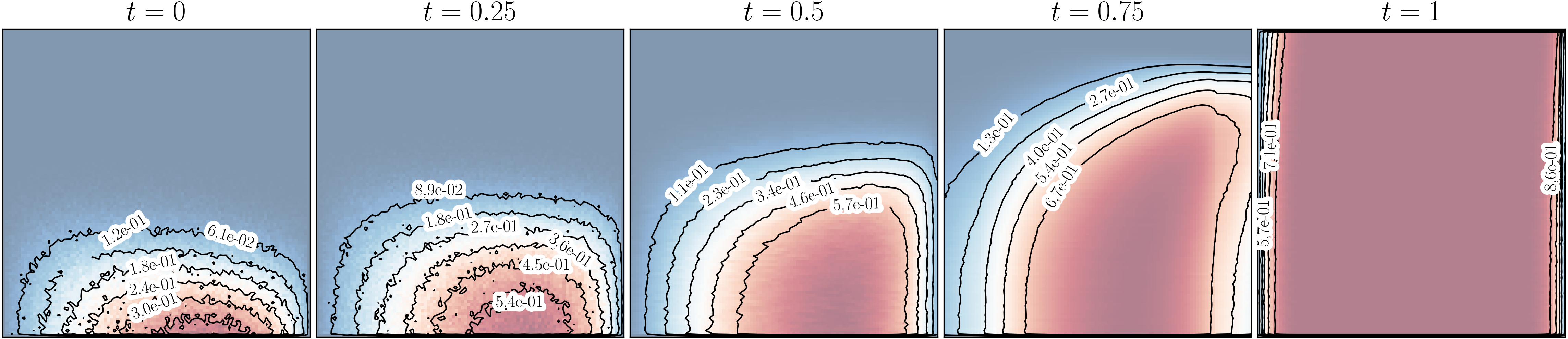}
    \caption{Contour snapshots of $h_{T}(t, \bm{x})$ for \textbf{Example 4} with $\alpha=\beta=1$, $f(x) = x^3$, $\bm{\ell} = (2,2)^{\top}$, $\mathcal{I}=[0,1]$, using the Feynman-Kac path integral computation. Each contour snapshot is plotted over the set $\mathcal{X}\equiv{\rm{Rect}}\left({\bm{0},\bm{\ell}}\right)$.}
    \label{fig:actuation_contour}
\end{figure*}
\begin{figure}[t]
    \centering
\includegraphics[width=0.95\linewidth]{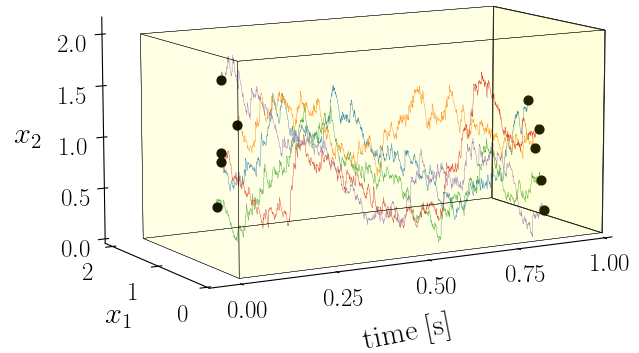}    
    \caption{The 5 controlled sample paths for \textbf{Example 4}.}
    \label{fig:2d_traj_actuator}
\end{figure}

\noindent\textbf{Example 4 (Prescribed time rectangular invariance with nonlinear drift)}\\ 
\noindent The purpose of this example is to illustrate how direct numerical computation can be performed for the proposed method with given horizon $T<\infty$. This circumvents the need for analytic handle on the solution of the associated BVP \eqref{finiteTimeDirichletPDEBVP}. 

We consider a two state single input case of \eqref{ControlledSDE} in the form
\begin{align}
    \begin{bmatrix}
         \differential x_{1t}^u\\
         \differential x_{2t}^u
    \end{bmatrix}=\left(\begin{bmatrix}
    x_{2t}^u\\
     - \alpha f\left(x_{1t}^u\right) - \beta x_{2t}^u
    \end{bmatrix}
    +\begin{bmatrix}
         0\\
         1
    \end{bmatrix} u\right) \differential t +\begin{bmatrix}
         0\\
         1
    \end{bmatrix} \differential w_t,
\label{ControlledSDENonlinearExample}    
\end{align}
with sufficiently smooth $f$, and fixed parameters $\alpha,\beta>0$. The degenerate diffusion \eqref{ControlledSDENonlinearExample} violates \textbf{Assumption 2} but we will see that the proposed computational framework still applies thanks to the same input and noise channels in \eqref{ControlledSDENonlinearExample}.

One may interpret \eqref{ControlledSDENonlinearExample} as the equations of motion for a controlled spring-mass-damper with force input having noisy actuation (thus same channel for input and noise). The $f(\cdot)$ would then model the spring nonlinearity. Alternatively, \eqref{ControlledSDENonlinearExample} can be seen as a controlled RLC circuit with linear inductor and Nyquist-Johnson resistor \cite{brockett1979stochastic}. The $f(\cdot)$ would then model the capacitor nonlinearity, i.e., capacitor voltage as nonlinear function of charge \cite[Example 3]{liberzon2000nonlinear}.

For $\bm{\ell}\in\mathbb{R}^{n}_{>0}$, we consider the prescribed time controlled set invariance for \eqref{ControlledSDENonlinearExample} with problem data
\begin{align}
&\left(\bm{f},\bm{G},\bm{\sigma},\mathcal{X},\mathcal{I}\right)\nonumber\\
=&\left(\!\begin{bmatrix}
    x_{2t}^u\\
     - \alpha f\left(x_{1t}^u\right) - \beta x_{2t}^u
    \end{bmatrix}\!\!,\begin{bmatrix}0\\1\end{bmatrix}\!\!,\begin{bmatrix}0\\1\end{bmatrix}\!\!,{\mathrm{Rect}}(\mathbf{0},\bm{\ell}),[0,T]\right).
\label{ProblemDataExample3}
\end{align}
In this case, the proposed computational \textbf{step 1} involves solving the Dirichlet BVP
\begin{subequations}
\begin{align}
&\dfrac{\partial h_T}{\partial t} + x_2 \frac{\partial h_T}{\partial x_1} - \left(\alpha f(x_1) + \beta x_2\right)\frac{\partial h_T}{\partial x_2} + \frac{1}{2}\frac{\partial^2 h_{T}}{\partial x_2^2}=0 \nonumber\\
&\qquad\qquad\qquad\qquad\forall\left(t,\bm{x}\right)\in[0,T]\times{\rm{Rect}}\left({\bm{0},\bm{\ell}}\right),\label{Example3finiteTimePDE}\\
& h_T(T,\bm{x}) = \begin{cases}1& \text{if} \quad\bm{0}<\bm{x}<\bm{\ell},\\
    0 & \text{otherwise}, \end{cases}
\label{Example3InitialCondition}\\
&h_T(t,\bm{x}) = 0 \quad\forall\:(t,\bm{x})\in[0,T)\times\partial{\rm{Rect}}\left({\bm{0},\bm{\ell}}\right).\label{Example3finiteTimeBC}
\end{align}
\label{Example3finiteTimeDirichletPDEBVP}
\end{subequations}
As for \textbf{step 2}, \eqref{BackoutToControl} and \eqref{defScoreFiniteTime} now specialize to
\begin{align}
\begin{bmatrix}
0\\
1
\end{bmatrix}u(t,x_1,x_2) = \bm{s}_{T}(t,x_1,x_2) = \begin{bmatrix}
0\\
\frac{\partial}{\partial x_2} \log h_{T}(t,x_1,x_2)
\end{bmatrix},
\label{ControlDegenerateDiffusion}
\end{align}
where $h_{T}$ solves \eqref{Example3finiteTimeDirichletPDEBVP}. The desired control is $u =\frac{\partial}{\partial x_2} \log h_{T}$.

Unlike previous examples, semi-analytical handle on the solution of \eqref{Example3finiteTimeDirichletPDEBVP} is not available in general. In such cases, we propose to compute $h_{T}$ via the Feynman-Kac path integral \cite[Ch. 8.2]{oksendal2013stochastic}, \cite{yong1997relations}, \cite[Ch. 3.3]{yong2012stochastic} where the idea is to numerically approximate $h_{T}$ as conditional expectation: 
\begin{align}
    h_T(t, \tilde{\bm{x}}) = \mathbb{E} \left[ \bm{1}_{\{\bm{x}(T)\in \mathcal{X}\}} \mid \bm{x}(t) = \tilde{\bm{x}} \right ] \quad\forall t \in [0,T),
    \label{ConditionalExpectation}
\end{align}
wherein $\bm{1}_{\{\bm{x}(T)\in \mathcal{X}\}}$ denotes the indicator function of the event $\{\bm{x}(T)\in \mathcal{X}\}$, and $\bm{x}(t)$ are the sample paths of the associated uncontrolled It\^{o} SDE \eqref{UnconditionedSDE}. This allows for approximating the (deterministic) solution of a backward-in-time PDE BVP via forward-in-time sample path simulation. For applications of Feynman-Kac path integral computation in learning and control, see e.g., \cite{pra1990markov,theodorou2010learning,nodozi2022schrodinger}.

Fig. \ref{fig:actuation_contour} shows the contour snapshots for $h_{T}$ solving \eqref{Example3finiteTimeDirichletPDEBVP} for $\alpha=\beta=1$, $f(x) = x^3$, $\bm{\ell} = (2,2)^{\top}$, $\mathcal{I}=[0,1]$. For this computation, we implemented the Feynman-Kac path integrals using the Euler-Maruyama scheme with time step $10^{-3}$ and with $10^{4}$ spatially uniform gridpoints over $[0,2]^2$. We used $1000$ sample paths for each of the gridpoints to empirically approximate the path integral.

Fig. \ref{fig:2d_traj_actuator} shows $5$ controlled sample paths obtained from the closed loop simulation via Euler-Maruyama with the same time step as before, and with the $h_{T}$ obtained from the path integral computation discussed above.

\subsection{The case $\mathcal{I}=[0,\infty)$}\label{subsec:numerical_InfiniteHorizon}
We next exemplify the proposed controlled set invariance computation for infinite horizon. In particular, \textbf{Example 5} is the infinite horizon version of \textbf{Example 1}. \textbf{Example 6} is about a.s. controlled invariance in a disk. \textbf{Example 7} is more general in that no analytic handle on $\psi_0$ is known, and is numerically computed via inverse power iteration \cite[Ch. 4.1.3]{saad2011numerical}.


\noindent\textbf{Example 5 (Infinite time hyper-rectangle invariance with zero 
drift)}\\
\noindent For $\bm{\ell}\in\mathbb{R}^{n}_{>0}$, consider the controlled set invariance of a given hyper-rectangle ${\rm{Rect}}\left({\bm{0},\bm{\ell}}\right)$ for all $t\geq 0$ with prior dynamics being the standard Brownian motion in $\mathbb{R}^{n}$.
The problem data are
\begin{align}
&\left(\bm{f},\bm{G},\bm{\sigma},\mathcal{X},\mathcal{I}\right)=\left(\bm{0},\bm{I}_{n},\bm{I}_{n},{\rm{Rect}}\left({\bm{0},\bm{\ell}}\right),[0,\infty)\right).  
\label{ProblemDataExample5}
\end{align}

Here, the Dirichlet eigenvalue problem \eqref{infiniteTimeDirichletPDEBVP} specializes to
\begin{subequations}
\begin{align}
-\frac{1}{2}\Delta_{\bm{x}}\psi_0(\bm{x}) &= \lambda_0 \psi_0(\bm{x}) \quad\forall\:\bm{x}\in{\rm{Rect}}\left({\bm{0},\bm{\ell}}\right),\label{infiniteTimeLaplacianPDE}\\
\psi_0(\bm{x})&=0 \quad\forall \bm{x}\in \partial{\rm{Rect}}\left({\bm{0},\bm{\ell}}\right).\label{infiniteTimeLaplacianBC}
\end{align}
\label{infiniteTimeLaplacianBVP}
\end{subequations}
The principal eigenvalue-eigenfunction pair $\left(\lambda_0, \psi_0\right)$ for \eqref{infiniteTimeLaplacianBVP} is \cite[Sec. 3.1]{grebenkov2013geometrical} 
\begin{align}
\left(\lambda_0, \psi_0\right) = \left(\frac{\pi^2}{2} \sum\limits_{i=1}^n \frac{1}{\ell_i^2}, \; \prod_{i=1}^n \sin\left( \frac{\pi x_i}{\ell_i}\right)\right).
    \label{GroundEigPairHalfLaplacian}
\end{align}
For $n=1$, $\ell=1$, Fig. \ref{fig:1d_infinite}(a) shows the $\psi_0$, and Fig. \ref{fig:1d_infinite}(b) shows 5 controlled sample paths. Specifically, in Fig. \ref{fig:1d_infinite}(a), the analytical $\psi_0$ from \eqref{GroundEigPairHalfLaplacian} is compared with the numerical estimate for the same obtained from inverse power iteration. The latter was applied to the discrete version of \eqref{infiniteTimeLaplacianBVP} in the domain $[0,1]$ with spatial discretization length $10^{-3}$. The sample paths in Fig. \ref{fig:1d_infinite}(b) were obtained by the closed loop Euler-Maruyama simulation with the same time discretization as in earlier examples. Similar plots for $n=2$, $\ell=(2,2)^{\top}$, are shown in
Fig. \ref{fig:2d_infinite}.
\begin{figure}[t]
    \centering
    \subfigure[$\psi_{0}(x)$ ]{
        \includegraphics[width=0.23\textwidth]{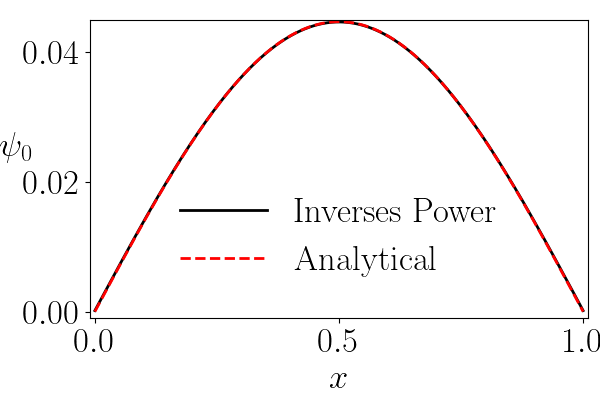}
    }
    \hspace{-10pt}
    \subfigure[5 controlled sample paths]{
        \includegraphics[width=0.23\textwidth]{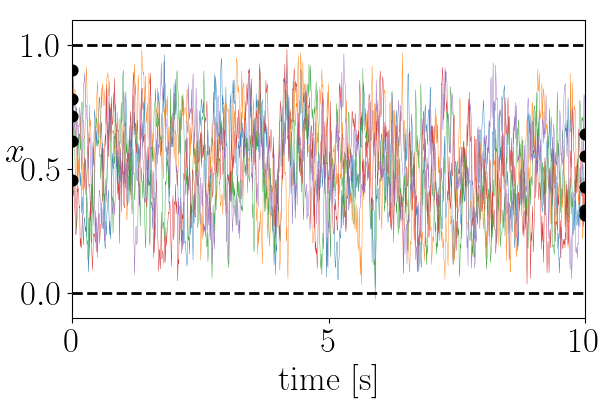}
    }
    \caption{The principal eigenfunction $\psi_0$ and 5 controlled sample paths for \textbf{Example 5} with $n=1$, $\ell=1$.}
    \label{fig:1d_infinite}
\end{figure}
\begin{figure}[htbp]
    \centering
    \subfigure[Contour plot of $\psi_{0}(\bm{x})$]{
        \includegraphics[width=0.21\textwidth]{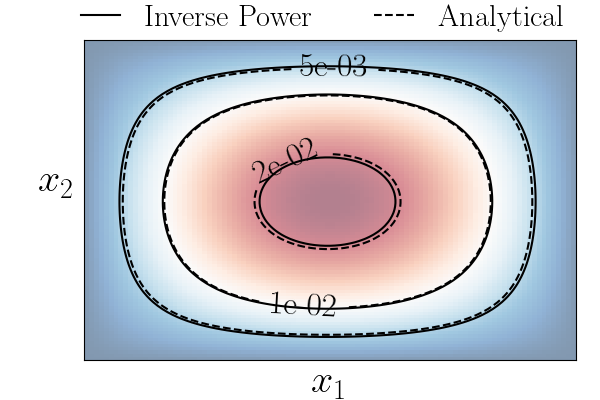}
    }
    \hspace{-20pt}
    \subfigure[5 controlled sample paths ]{
        \includegraphics[width=0.255\textwidth]{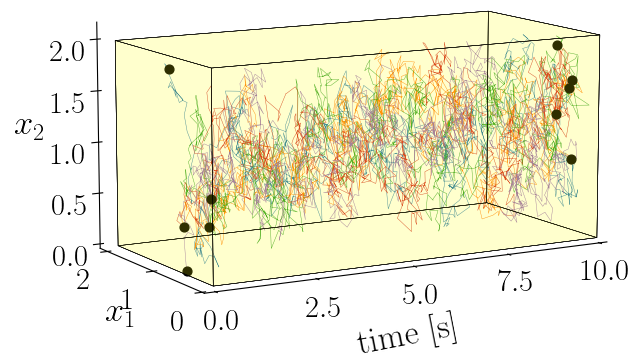}
    }
    \caption{The principal eigenfunction $\psi_0$ and 5 controlled sample paths for \textbf{Example 5} with $n=2, \ell=(2,2)^{\top}$. The $\psi_0$ contours are plotted over the set $\mathcal{X}\equiv{\rm{Rect}}\left({\bm{0},\bm{\ell}}\right)$.}
    \label{fig:2d_infinite}
\end{figure}

\noindent\textbf{Example 6 (Infinite time disk invariance with zero drift)}\\ 
\noindent
For given $r_1>0$, consider the a.s. controlled set invariance of ${\rm{Disk}}\left(\bm{0},r_1\right)$ for all $t\geq 0$ with prior dynamics being the standard Brownian motion in $\mathbb{R}^{2}$. In this case, the problem data are $\left(\bm{f},\bm{G},\bm{\sigma},\mathcal{X},\mathcal{I}\right)=\left(\bm{0},\bm{I}_{2},\bm{I}_{2},{\rm{Disk}}\left(\bm{0},r_1\right),[0,\infty)\right)$, and we specialize \eqref{infiniteTimeDirichletPDEBVP} in polar coordinates as 
\begin{subequations}
\begin{align}
&-\frac{1}{2}\left(\frac{\partial^2 \psi_0}{\partial r^2}+\frac{1}{r}\dfrac{\partial \psi_0}{\partial r}+\frac{1}{r^2}\frac{\partial^2 \psi_0}{\partial 
 \theta^2}\right) = \lambda_0 \psi_0(r,\theta)\nonumber\\
 &\hspace*{1.5in}\forall(r,\theta)\in \text{Disk}(\bm{0},r_1),\label{infiniteTimePDEdisk}\\
&\psi_0(r,\theta)=0, \quad\forall (r,\theta)\in \partial\text{Disk}(\bm{0},r_1).\label{infiniteTimeBCdisk}
\end{align}
\label{infiniteTimedisk}
\end{subequations}
\begin{figure}[t]
    \centering
\includegraphics[width=0.6\linewidth]{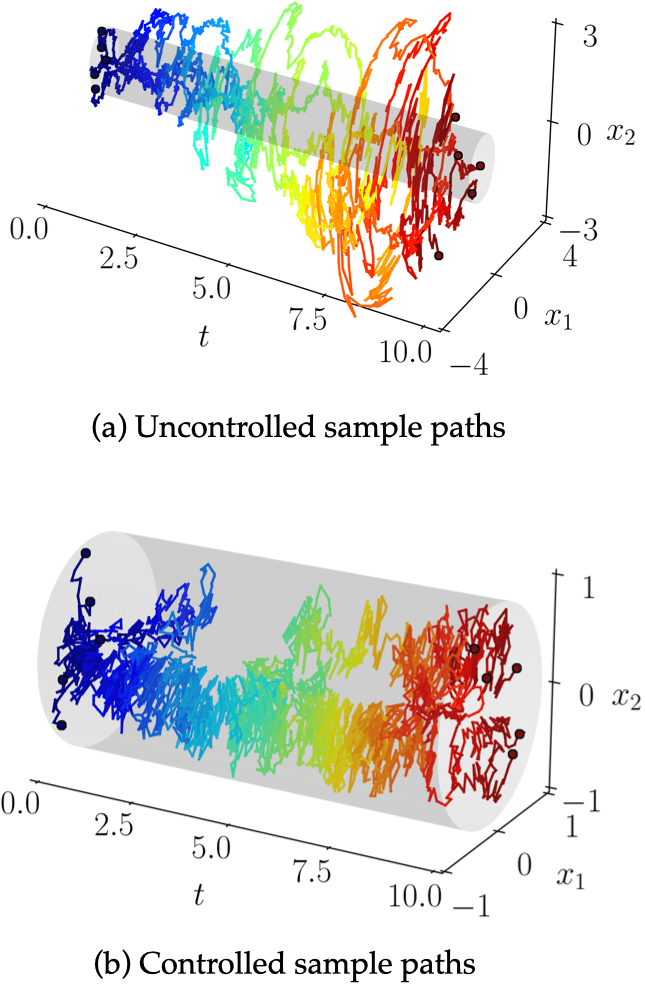}    
    \caption{(a) Uncontrolled and (b) controlled sample paths for \textbf{Example 6} with $r_1=1$. With probability one, the controlled paths are guaranteed to be in $\mathcal{X}\equiv{\rm{Disk}}\left(\bm{0},r_1\right)$. The color (blue to red) along each sample path indicates the progression of time.}
    \label{fig:2d_traj_disk}
\end{figure}
The principal eigenvalue-eigenfunction pair for \eqref{infiniteTimedisk} is \cite[Prop. 1.2.14]{henrot2006extremum}
\begin{align}
\left(\lambda_0,\psi_0\right)=\left(\left(\frac{z_1^{0}}{r_1}\right)^{2},\:\frac{1}{\sqrt{\pi}r_1 |J_{0}^{\prime}(z_1^0)|}J_0\left(\frac{z_1^{0} r}{r_1}\right)\right),
\label{GroundEigPairDisk}   
\end{align}
wherein as before, $J_0(\cdot)$ is the Bessel function of the first kind with order zero, $J_0^{\prime}(\cdot)$ denotes its derivative, and  $z_1^0\approx 2.4048$ is the first zero of $J_0(\cdot)$.

For $r_1=1$, Fig. \ref{fig:2d_traj_disk} compares 5 uncontrolled and controlled sample paths computed using \eqref{InFiniteTimeSDE} with $\psi_0$ as in \eqref{GroundEigPairDisk}.

\noindent\textbf{Example 7 (Infinite time hyper-rectangle invariance with non-zero drift)}\\ 
\noindent
We now consider problem data $\left(\bm{f},\bm{G},\bm{\sigma},\mathcal{X},\mathcal{I}\right)=\left(\begin{bmatrix}0.01-x_2\\0\end{bmatrix},\bm{I}_{2},\bm{I}_{2},[-1,1]^2,[0,\infty)\right)$. 

Unlike the previous examples, here analytic handle on $\psi_0$ is unavailable. Fig. \ref{fig:2d_follower}(a) shows the principal eigenfunction $\psi_0$ computed using the inverse power iteration. Fig. \ref{fig:2d_follower}(b) shows 5 controlled sample paths computed with this $\psi_0$.

\begin{figure}[htbp]
    \centering
    \subfigure[Contour plot of $\psi_{0}(\bm{x})$ ]{
        \includegraphics[width=0.21\textwidth]{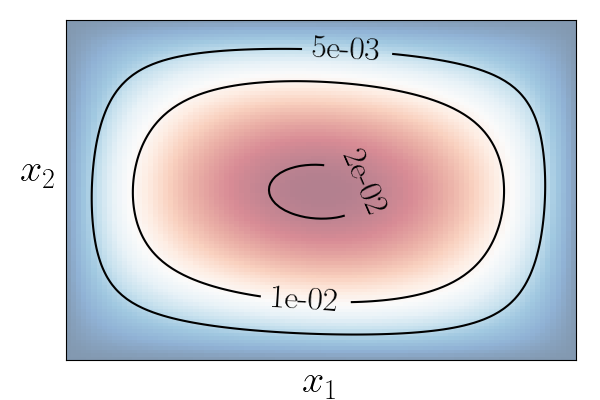}
    }
    \hspace{-10pt}
    \subfigure[Controlled sample paths ]{
        \includegraphics[width=0.255\textwidth]{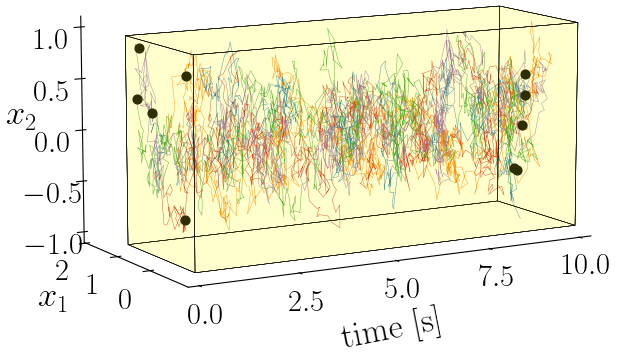}
    }
    \caption{The principal eigenfunction $\psi_0$ and 5 controlled sample paths for \textbf{Example 7}.  The $\psi_0$ contours are plotted over the set $\mathcal{X}\equiv[-1,1]^2$.}
    \label{fig:2d_follower}
\end{figure}

\noindent\textbf{Example 8 (Set invariance as non-collision)}\\ 
\noindent The purpose of this example is to highlight that for special choices of $\mathcal{X}$, we may compute the conditioned process as one that satisfies non-collision of the coordinates of the controlled sample paths. In particular, let $\mathcal{X}\equiv\mathcal{W}_{n}$ be the Weyl chamber of type ${\mathrm{A}}_{n-1}$ \cite[Ch. 15]{fulton2013representation}, and $\partial\mathcal{W}_{n}$ be its boundary, i.e.,
\begin{subequations}
\begin{align}
\mathcal{W}_{n}&:=\{\bm{x}\in\mathbb{R}^{n}\mid x_{1} < x_{2} < \hdots < x_{n}\},\\
\partial\mathcal{W}_{n} &:= \{\bm{x}\in\mathbb{R}^{n}\mid x_{1} \leq x_{2} \leq \hdots \leq x_{n},\nonumber\\
&\qquad\qquad\qquad x_{i}=x_{j}\;\text{for some}\;i\neq j\}.
\end{align}
\label{defWeylChamber}    
\end{subequations}
Guaranteeing controlled a.s. set invariance with problem data $\left(\bm{f},\bm{G},\bm{\sigma},\mathcal{X},\mathcal{I}\right)=\left(\bm{0},\bm{I}_{2},\bm{I}_{2},\mathcal{W}_{n},[t_0,\infty)\right)$ is then equivalent to condition $n$ 1D Brownian sample paths to remain non-colliding for all $t>0$. 

At first glance, this conditioning counters intuition because the 1D sample paths of the prior Brownian process collide with probability one, so the desired conditioning is w.r.t. a zero probability event. Nevertheless, it turns out to be possible in a limiting sense \cite{grabiner1999brownian}. While this computation is of slightly different flavor, we outline its rudiments as they help shed light on the connections between the finite and infinite horizons. For details, we refer the readers to \cite{grabiner1999brownian,katori2007noncolliding}.

The starting point is to invoke the Karlin-McGregor formula \cite{karlin1959coincidence} that gives a determinantal expression for the transition density $p$ for the \emph{absorbing Brownian motion in} $\mathcal{W}_{n}$, given by
\begin{align}
p(t_0,\bm{x}_0,t,\bm{x}) = \det\left[\kappa\left(t_0,x_{0i},t,x_{j}\right)\right]_{1 \leq i,j\leq n}, \, \forall\:\bm{x}_{0},\bm{x}\in\mathcal{W}_{n},
\label{KarlinMcGregor}    
\end{align}
where $$\kappa(s,x,t,y) := \frac{\exp\left(-\frac{(x-y)^{2}}{2(t-s)}\right)}{\sqrt{2\pi (t-s)}}, \quad 0\leq s < t, \quad x,y\in\mathbb{R},$$ 
is the 1D Brownian or heat kernel. 

For $p$ as in \eqref{KarlinMcGregor}, let
\begin{align}
g_{T}(\tau,\bm{x}):=\!\int_{\mathcal{W}_{n}}\!p(t_0,\bm{x}_0,\tau,\bm{x})\differential\bm{x}_{0}\quad\forall\tau>0, \: \bm{x}\in\mathcal{W}_{n},
\label{defgT}    
\end{align}
and $h_{T}(t,\cdot):=g_{T}(T-t,\cdot)$ for all $t\in[t_0,T]$ with fixed $T<\infty$. Then \eqref{ConditionedTransitionDensity} gives the transition probability density $\widetilde{p}_{T}$ for a Brownian path from $\bm{x}_{0}\in\mathcal{W}_{n}$ at $t_0$, to $\bm{x}\in\mathcal{W}_{n}$ at $t$, conditioned to stay in $\mathcal{W}_{n}$ without collision for all $t\in[0,T]$. 

Even though this finite horizon transition density $\widetilde{p}_{T}$ is time inhomogeneous, passing to the limit $T\uparrow\infty$ produces a time homogeneous transition probability density. Specifically, substituting \eqref{KarlinMcGregor} in \eqref{defgT}, followed by some computation \cite[Sec. 3]{katori2007noncolliding} yields an asymptotic estimate 
\begin{align}
g_{T}\left(\tau,\bm{x}\right) = c_{n} \tau^{-\frac{n(n-1)}{4}} {\mathrm{Vander}}(\bm{x})\bigg\{\!1 + \mathcal{O}\left(\frac{\|\bm{x}\|_2}{\sqrt{\tau}}\right)\!\!\bigg\}
\label{gTasymptotic}    
\end{align}
as $\frac{\|\bm{x}\|_2}{\sqrt{\tau}}\downarrow 0$, wherein the Vandermonde determinant
\begin{align}
{\mathrm{Vander}}(\bm{x}):= \det\left[x_{i}^{j-1}\right]_{1\leq i,j\leq n} = \prod_{1\leq i < j \leq n}\left(x_{i}-x_{j}\right),
\label{VandermondeDet}    
\end{align}
the constant $c_{n}:=\pi^{-\frac{n}{2}}\prod_{j=1}^{n}\Gamma(j/2)/\Gamma(j)$, and $\Gamma(\cdot)$ denotes the Gamma function. 

Thus,
\begin{align}
\widetilde{p}_{\infty}\left(t_{0},\bm{x}_{0},t,\bm{x}\right)&:=\lim_{T\uparrow\infty}\widetilde{p}_{T}\left(t_{0},\bm{x}_{0},t,\bm{x}\right) \nonumber\\
&=  \bigg\{\lim_{T\uparrow\infty}\dfrac{h_{T}(t,\bm{x})}{h_{T}(t_0,\bm{x}_0)}\bigg\}p\left(t_{0},\bm{x}_{0},t,\bm{x}\right)\nonumber\\
&=\bigg\{\lim_{T\uparrow\infty}\dfrac{g_{T}(T-t,\bm{x})}{g_{T}(T-t_0,\bm{x}_0)}\bigg\}p\left(t_{0},\bm{x}_{0},t,\bm{x}\right)\nonumber\\
&=\frac{{\mathrm{Vander}}(\bm{x})}{{\mathrm{Vander}}(\bm{x}_{0})} p\left(t_{0},\bm{x}_{0},t,\bm{x}\right),
\label{InfiniteHorizonTransitionDensityNoCollisionBrownian}
\end{align}
which says that noncolliding Brownian motion is the
Doob's $h$-transform of the absorbing Brownian motion in $\mathcal{W}_{n}$, where ${\mathrm{Vander}}(\bm{x})$ plays the role of the (spatial) harmonic function. Indeed, direct computation verifies $\Delta_{\bm{x}}{\mathrm{Vander}}(\bm{x}) = 0$.

\begin{figure}[htbp]
    \centering
    \subfigure[1D Brownian paths $\{x_{i}(t)\}_{i=1}^{3}$ conditioned on $x_1(t) < x_2(t) < x_3(t)$ $\forall t\in[0,\infty)$. Initial locations in blue, terminal (here $t=10$) locations in red.]{       \includegraphics[width=0.99\linewidth]{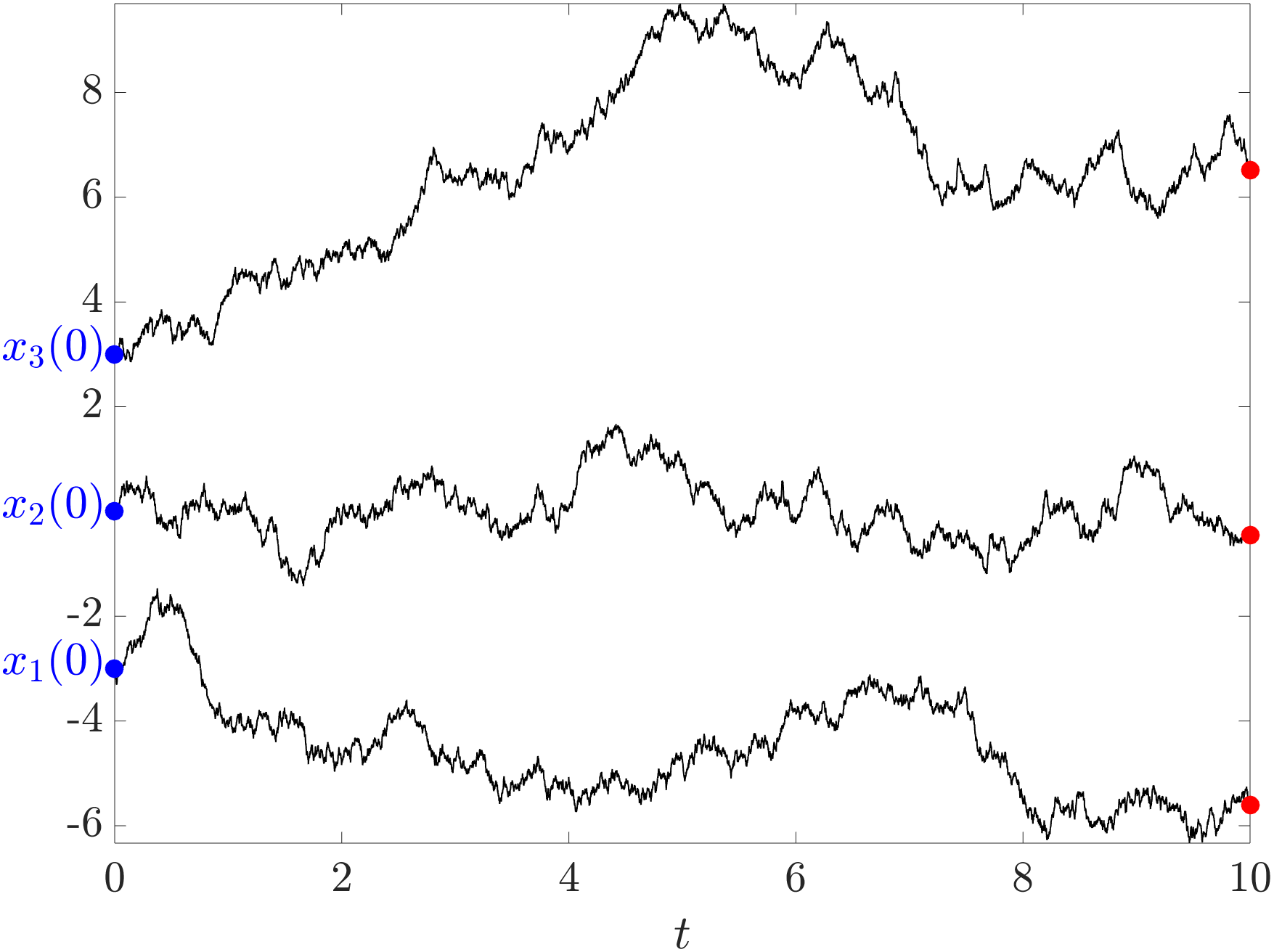}
    }
    \subfigure[The sample path $(x_1(t),x_2(t),x_3(t))^{\top}$ from part (a) shown within the Weyl chamber $\mathcal{W}_{3}$ (shaded region).]{        \includegraphics[width=0.99\linewidth]{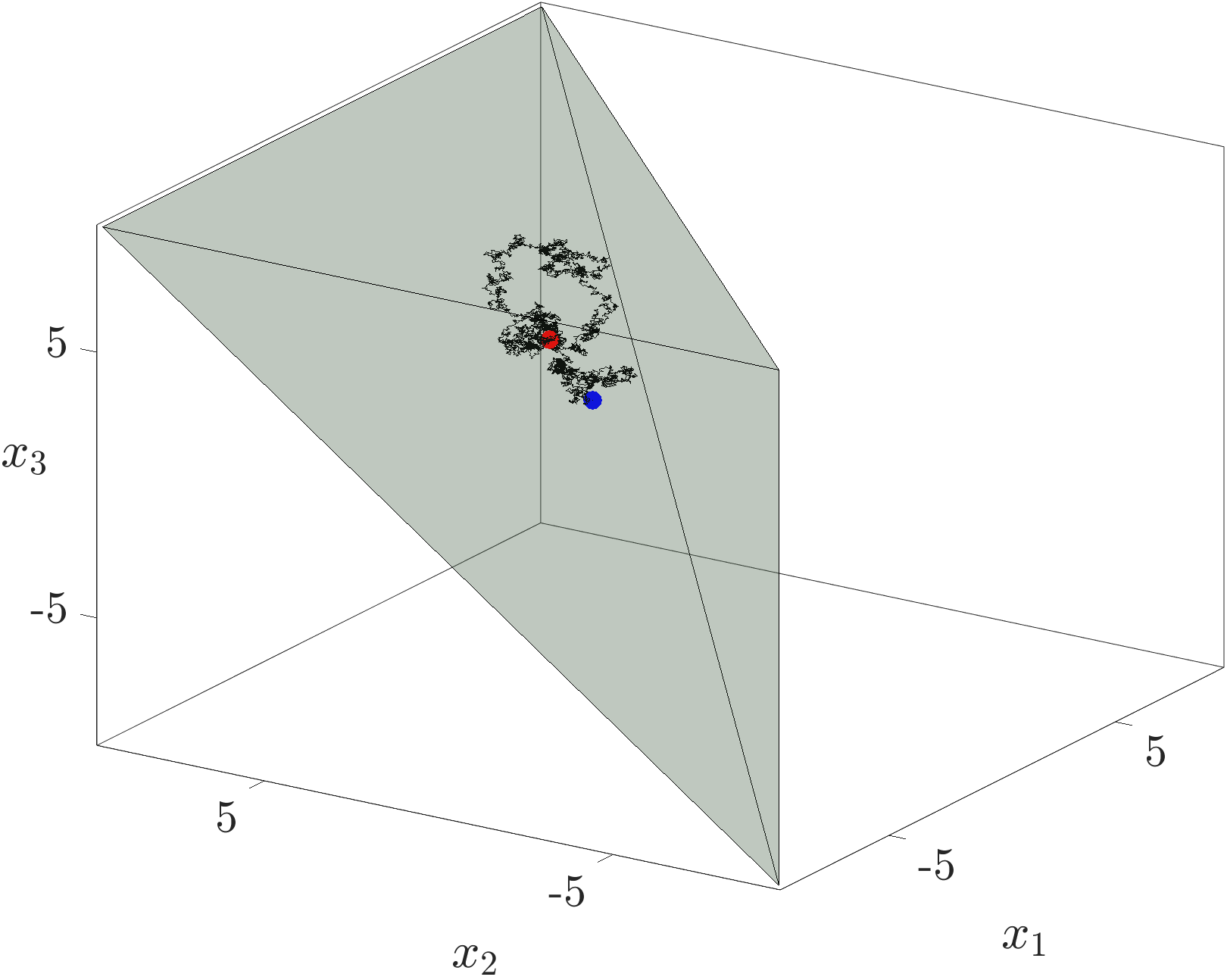}
    }
    \caption{The a.s. infinite horizon controlled invariance of the Weyl chamber $\mathcal{W}_{3}$ for \textbf{Example 8}.}
    \label{fig:DysonExample}
    \vspace*{-0.1in}
\end{figure}

Consequently, the time-invariant state feedback 
\begin{align}
\bm{u}(\bm{x}) = \nabla_{\bm{x}}\log{\mathrm{Vander}}(\bm{x}) = \begin{pmatrix}
\sum_{j\neq 1}\frac{1}{x_{1}-x_{j}}\\
\vdots\\
\sum_{j\neq n}\frac{1}{x_{n}-x_{j}}
\end{pmatrix}
\label{GradOfVandermonde}    
\end{align}
achieves the desired non-collision for all $t\in[t_0,\infty)$. The conditioned SDE \eqref{InFiniteTimeSDE} takes the form of the Dyson Brownian motion having a nonlinear repulsive drift\footnote{Alternatively, one can directly verify that \eqref{InfiniteHorizonTransitionDensityNoCollisionBrownian} solves the PDE initial value problem \eqref{TransitionPDFIVP} with $L^{\dagger}\widetilde{p}_{\infty} = \nabla_{\bm{x}}\cdot\left(\widetilde{p}_{\infty}\nabla_{\bm{x}}\log{\mathrm{Vander}}(\bm{x})\right) -\frac{1}{2}\Delta_{\bm{x}}\widetilde{p}_{\infty}$, and using \eqref{defAdjoint}, infer the corresponding sample path dynamics \eqref{DysonBMSDE}.}:
\begin{align}
        \differential\widetilde{x}_{i}(t) &= u_{i}\left(\widetilde{\bm{x}}(t)\right)\differential t + \differential w_{i}(t)\nonumber\\
        &= \sum_{j\neq i}\dfrac{\differential t}{\widetilde{x}_{i}(t)-\widetilde{x}_{j}(t)} + \differential w_{i}(t), \quad\forall i=1,\hdots, n.
\label{DysonBMSDE}    
\end{align}

Remarkably, the SDE \eqref{DysonBMSDE} was introduced by Dyson \cite{dyson1962brownian} for eigenvalue dynamics of Hermitian matrix-valued diffusion in the Gaussian unitary ensemble (GUE). In the random matrix theory literature \cite[Ch. 3.1]{tao2012topics}, \eqref{InfiniteHorizonTransitionDensityNoCollisionBrownian} is referred to as the Johansson formula \cite{johansson2001universality}, and signifies the joint density of eigenvalues for Hermitian random matrices in GUE. For set invariance in other types Weyl chambers and their connections with various random matrix diffusions, see e.g., \cite{grabiner1999brownian,konig2001eigenvalues,katori2004symmetry}.

Fig. \ref{fig:DysonExample} illustrates the a.s. controlled invariance of $\mathcal{W}_{3}$ using the Euler-Maruyama simulation with \eqref{DysonBMSDE} and time step-size $10^{-3}$.


\section{Conclusion}\label{sec:conclusion}
We proposed a score-based computational approach to certify or falsify the a.s. invariance for a stochastic control system modeled by a controlled diffusion. The method builds upon the Doob's $h$-transform, and is flexible w.r.t. the time horizon of interest. The computation involves solving certain Dirichlet boundary value problems that are amenable in general by the Feynman-Kac path integral in the given finite horizon case, and by the inverse power iteration in the infinite horizon case. Theoretical and numerical results are presented to explain the ideas. The work opens up several new directions. For instance, when a.s. invariance fails, one may design a controller to maximize the first exit time. It may also be of interest to condition the prior diffusion w.r.t. other desired space-time events beyond set invariance (e.g., collision avoidance between sample paths), and thereby deriving controllers guaranteeing the same. These will be pursued in our future works. 


\section*{Acknowledgment}
The authors acknowledge helpful technical discussions with Venkatraman Renganathan and Paul Griffioen during the preliminary stage of this work.


\appendix
\subsection{Proof for Theorem \ref{Thm:GivenFiniteTimeInvariance}}\label{App:ProofThm:GivenFiniteTimeInvariance}
We prove in the following sequence: \eqref{finiteTimeDirichletPDEBVP} $\rightarrow$ \eqref{FiniteTimeSDE} $\rightarrow$ \eqref{NewGenAsComposition} $\rightarrow$ \eqref{NewGenAsGamma}.

\noindent\textbf{Proof for \eqref{finiteTimeDirichletPDEBVP}.} Writing probability as expected value of the indicator function, we express \eqref{defhT} as 
\begin{align}
h_{T}(t,\bm{x})=\mathbb{E}_{\bm{x}}\left[\mathbf{1}_{\tau_{\mathcal{X}}>T}\right].
\label{hTexpectation}
\end{align}
From \eqref{defExpectation}-\eqref{BackwardPDE}, we already see that \eqref{hTexpectation} solves the backward Kolmogorov PDE \eqref{finiteTimePDE}. By the Feynman-Kac formula \cite[Ch. 5, Thm. 7.6]{karatzas2014brownian}, \eqref{hTexpectation} precisely corresponds to the solution of the Dirichlet BVP \eqref{finiteTimeDirichletPDEBVP}. The existence-uniqueness of solution for \eqref{finiteTimeDirichletPDEBVP} is guaranteed \cite[Ch. 1]{friedman2008partial}, \cite[p. 366-368]{karatzas2014brownian} under the stated assumptions on $\bm{f},\bm{\sigma}$.

\noindent\textbf{Proof for \eqref{FiniteTimeSDE}.} To derive the sample path dynamics for $\widetilde{\bm{x}}_{t}$, we notice that the absolute continuity $\widetilde{\mathbb{P}}_{\bm{x}}\vert_{\mathcal{F}_{t}}\ll\mathbb{P}_{\bm{x}}\vert_{\mathcal{F}_{t}}$ implies that $\bm{x}_{t}$ and $\widetilde{\bm{x}}_{t}$ have the same diffusion coefficients. The new drift coefficient $\widetilde{\bm{f}}$ for $\widetilde{\bm{x}}_{t}$, by Girsanov's theorem \cite[Thm. 8.6.6]{oksendal2013stochastic}, \cite[Ch. 3.5]{karatzas2014brownian}, satisfies\footnote{Recall that $\bm{\Sigma}$ is invertible per Assumption \textbf{A2}.}
\begin{align}
&\dfrac{\differential\widetilde{\mathbb{P}}_{\bm{x}}}{\differential\mathbb{P}_{\bm{x}}}\bigg\vert_{\mathcal{F}_{t}} = \exp\left(\displaystyle\int_{0}^{t}\langle\bm{\Sigma}^{-1}(\widetilde{\bm{f}}-\bm{f}),\bm{\sigma}\:\differential\bm{w}_{s}\rangle\right.\nonumber\\
&\qquad\qquad\qquad\quad\left.-\dfrac{1}{2}\int_{0}^{t}\langle\widetilde{\bm{f}}-\bm{f},\bm{\Sigma}^{-1}(\widetilde{\bm{f}}-\bm{f})\rangle\differential s\right).
\label{Girsanov}
\end{align}
Substituting \eqref{RadonNikodym} for the LHS of \eqref{Girsanov}, and then taking $\log$ to both sides, we get the stochastic integral representation
\begin{align}
&\log h_{T}(\bm{x}_{t\wedge\tau_{\mathcal{X}}}) - \log h_{T}(t,\bm{x}) = \displaystyle\int_{0}^{t}\langle\bm{\Sigma}^{-1}(\widetilde{\bm{f}}-\bm{f}),\bm{\sigma}\:\differential\bm{w}_{s}\rangle\nonumber\\
&\qquad\qquad\qquad-\dfrac{1}{2}\int_{0}^{t}\langle\widetilde{\bm{f}}-\bm{f},\bm{\Sigma}^{-1}(\widetilde{\bm{f}}-\bm{f})\rangle\differential s.
\label{loghTasStochasticIntegral}
\end{align}
Applying differential operator to both sides of \eqref{loghTasStochasticIntegral}, using It\^{o}'s lemma for $\log h_{T}$, and recalling that $\hess_{\bm{x}}\log h_{T}=\frac{1}{h_{T}}\hess_{\bm{x}}h_{T} - \left(\nabla_{\bm{x}}\log h_{T}\right)\left(\nabla_{\bm{x}}\log h_{T}\right)^{\top}$, we obtain
\begin{align}
&\left(\dfrac{1}{h_{T}}L_{0}[h_{T}] -\frac{1}{2}\langle\bm{\Sigma}\nabla_{\bm{x}}\log h_{T},\nabla_{\bm{x}}\log h_{T}\rangle\right)\differential t \nonumber\\
&+ \langle\nabla_{\bm{x}}\log h_{T},\bm{\sigma}\:\differential\bm{w}_t\rangle = -\dfrac{1}{2}\langle\widetilde{\bm{f}}-\bm{f},\bm{\Sigma}^{-1}(\widetilde{\bm{f}}-\bm{f})\rangle\differential t \nonumber\\
&\qquad\qquad\qquad\qquad\qquad\;+ \langle\bm{\Sigma}^{-1}(\widetilde{\bm{f}}-\bm{f}),\bm{\sigma}\:\differential\bm{w}_{t}\rangle,
\label{AppliedItoLemma}
\end{align}
where $L_0 = \frac{\partial}{\partial t} + L$. From \eqref{finiteTimePDE}, $L_{0}[h_{T}] = 0$ and the LHS of \eqref{AppliedItoLemma} simplifies. Comparing both sides of \eqref{AppliedItoLemma}, we then conclude: $\widetilde{\bm{f}} - \bm{f} = \bm{\Sigma}\nabla_{\bm{x}}\log h_{T}$, i.e., the drift for the conditioned process
\begin{align}
\widetilde{\bm{f}} = \bm{f} + \bm{\Sigma}\nabla_{\bm{x}}\log h_{T}.
\label{DriftOfConditionedProcess}    
\end{align}

\noindent\textbf{Proof for \eqref{NewGenAsComposition}.} Noting that \eqref{FiniteTimeSDE} differs from \eqref{UnconditionedSDE} by an additive drift term, using \eqref{defGenerator}, the new generator $$\widetilde{\gen}[\phi] = \gen[\phi] + \langle\bm{\Sigma}\nabla_{\bm{x}}\log h_T,\nabla_{\bm{x}}\phi\rangle.$$ Therefore,
\begin{align}
\left(\dfrac{\partial}{\partial t}+\widetilde{\gen}\right)[\phi] = \dfrac{\partial\phi}{\partial t} + \gen[\phi] + \langle\bm{\Sigma}\nabla_{\bm{x}}\log h_T,\nabla_{\bm{x}}\phi\rangle.
\label{LtildeEqualsLPlusExtra}
\end{align}
Now,
\begin{align}
&\left(h_T^{-1}\circ \left(\dfrac{\partial}{\partial t}+\gen\right) \circ h_T\right) \left[\phi\right] \nonumber\\
=&h_T^{-1}\left(\dfrac{\partial(h_T\phi)}{\partial t}+\gen\left[h_T\phi\right]\right) \nonumber\\
=& h_T^{-1}\left(h_T\dfrac{\partial\phi}{\partial t} + \phi\dfrac{\partial h_T}{\partial t} + h_T\langle\bm{f},\nabla_{\bm{x}}\phi\rangle + \phi\langle\bm{f},\nabla_{\bm{x}}h_T\rangle \nonumber\right.\\
&\left.\qquad\qquad\qquad\qquad\hspace{35pt}+ \dfrac{1}{2}\langle\bm{\Sigma},\hess_{\bm{x}}\left(h_T\phi\right)\rangle\right).
\label{ExpandingComposition}
\end{align}
Since $\hess_{\bm{x}}\left(h_T\phi\right) = h_T\hess_{\bm{x}}\phi + \left(\nabla_{\bm{x}}h_T\right)\left(\nabla_{\bm{x}}\phi\right)^{\top} + \phi\hess_{\bm{x}}h_T + \left(\nabla_{\bm{x}}\phi\right)\left(\nabla_{\bm{x}}h_T\right)^{\top}$, we have 
\begin{multline*}
\dfrac{1}{2}\langle\bm{\Sigma},\hess_{\bm{x}}\left(h_T\phi\right)\rangle = \frac{1}{2}\left(h_T\langle\bm{\Sigma},\hess_{\bm{x}}\phi\rangle \right. \\\left.+2\langle\bm{\Sigma},(\nabla_{\bm{x}}h_T)(\nabla_{\bm{x}}\phi)^{\top}\rangle + \phi\langle\bm{\Sigma},\hess_{\bm{x}}h_T\rangle\right),    
\end{multline*}
which we substitute in \eqref{ExpandingComposition} to obtain
\begin{align}
&\left(h_T^{-1}\circ \left(\dfrac{\partial}{\partial t}+\gen\right) \circ h_T\right) \left[\phi\right] = \left(\dfrac{\partial\phi}{\partial t} + \gen[\phi]\right) \nonumber\\
&\quad+ h_T^{-1}\phi\underbrace{\left(\dfrac{\partial h_T}{\partial t} + \gen[h_T]\right)}_{=0\;\text{from}\;\eqref{finiteTimePDE}} + \bigg\langle\!\!\bm{\Sigma}, \left(\nabla_{\bm{x}}\log h_T\right)\left(\nabla_{\bm{x}}\phi\right)^{\!\top}\!\!\bigg\rangle.
\label{SimplifyComposition}
\end{align}
Because $\bm{\Sigma}$ is symmetric, the Hilbert-Schmidt inner product
\begin{align}
&\bigg\langle\bm{\Sigma}, \left(\nabla_{\bm{x}}\log h_T\right)\left(\nabla_{\bm{x}}\phi\right)^{\top}\bigg\rangle \nonumber\\
=& \tr\left(\bm{\Sigma}\left(\nabla_{\bm{x}}\log h_T\right)\left(\nabla_{\bm{x}}\phi\right)^{\top}\right)\nonumber\\
=& \tr\!\left(\!\left(\nabla_{\bm{x}}\phi\!\right)\!\left(\nabla_{\bm{x}}\log h_T\right)^{\!\top}\!\bm{\Sigma}\!\right)\!=\!\tr\!\left(\!\left(\nabla_{\bm{x}}\log h_T\right)^{\!\top}\!\bm{\Sigma}(\nabla_{\bm{x}}\phi)\!\right)\nonumber\\
=& \tr\!\left(\!\left(\bm{\Sigma}(\nabla_{\bm{x}}\log h_T)\right)^{\top}(\nabla_{\bm{x}}\phi)\!\right) \!=\! \langle\bm{\Sigma}\nabla_{\bm{x}}\log h_T,\nabla_{\bm{x}}\phi\rangle,
\label{SimplifyInnerProduct}
\end{align}
where the second and third equality used the invariance of trace under transposition and cyclic permutation, respectively. Combining \eqref{SimplifyComposition}-\eqref{SimplifyInnerProduct} with \eqref{LtildeEqualsLPlusExtra} yields \eqref{NewGenAsComposition}.

\noindent\textbf{Proof for \eqref{NewGenAsGamma}.} Using Lemma \ref{LemmaGammaForOurGenerator}, $\langle\bm{\Sigma}\nabla_{\bm{x}}\log h_T,\nabla_{\bm{x}}\phi\rangle=2\Gamma\left(\log h_T,\phi\right)$, which combined with \eqref{LtildeEqualsLPlusExtra} gives \eqref{NewGenAsGamma}.
\qedsymbol


\subsection{Proof of Theorem \ref{Thm:ControllerFiniteTime}}
By equating the diffusion process $\bm{x}_t^{\bm{u}}$ governed by \eqref{ControlledSDE}, to the process $\widetilde{\bm{x}}_t$ governed by \eqref{FiniteTimeSDE}, we find that the necessary and sufficient condition is the solvability of the linear system $\bm{Gu}=\bm{s}_{T}$, and the statement follows. \qedsymbol


\subsection{Proof of Theorem \ref{Thm:GivenInFiniteTimeInvariance}}\label{App:ProofThm:InfiniteTimeInvariance}
\noindent\textbf{Proof for \eqref{InFiniteTimeSDE}.} Using Theorem \ref{Thm:GivenFiniteTimeInvariance}, we are to establish that the parametric limit $T\uparrow\infty$ of the It\^{o} process \eqref{FiniteTimeSDE}, yields the conditioned process \eqref{InFiniteTimeSDE}. Thus, it suffices to consider the term $\lim_{T\uparrow\infty}\nabla_{(\cdot)}\log h_{T}(t,\cdot)$.

For any given $T<\infty$, we know from Theorem \ref{Thm:GivenFiniteTimeInvariance} that $h_T\in\mathcal{C}^{1,2}\left([0,T];\mathcal{X}\right)$. From \eqref{AsymptoticExpansion}, the sequence of functions $h_{T}(t,\bm{x})$ indexed by parameter $T>0$, converges uniformly in $\mathcal{X}$ as $T\uparrow\infty$. Assuming $\nabla_{\bm{x}}h_{T}(t,\cdot)$ converges uniformly in $\mathcal{X}$ as $T\uparrow\infty$, we write
\begin{align*}
\displaystyle\lim_{T\uparrow\infty}\nabla_{(\cdot)}\log h_{T}(t,\cdot) &= \nabla_{(\cdot)}\displaystyle\lim_{T\uparrow\infty}\log h_{T}(t,\cdot)\\
&=\nabla_{(\cdot)}\log h_{\infty}(t,\cdot)\\
&=\nabla_{(\cdot)}\log\psi_{0}\left(\cdot\right).
\end{align*}
In above, the first equality interchanges the limit and the gradient \cite[Thm. 7.17]{rudin1976principles}. The second equality uses continuity of $\log$ over positive argument and the definition \eqref{AsymptoticExpansion}. The last equality is as in \eqref{LimitOfGradLog}. Therefore, the $T\uparrow\infty$ limit of the conditioned process \eqref{FiniteTimeSDE} is the process \eqref{InFiniteTimeSDE}.

\noindent\textbf{Proof for \eqref{infiniteTimeDirichletPDEBVP}.} We show that the $T\uparrow\infty$ limit of \eqref{finiteTimeDirichletPDEBVP} is \eqref{infiniteTimeDirichletPDEBVP}. Following similar uniform convergence arguments as above (now w.r.t. $t$), the $T\uparrow\infty$ limit of \eqref{finiteTimePDE} is \begin{align}
\dfrac{\partial h_{\infty}}{\partial t} + \gen\left[h_{\infty}\right]=0 \:\forall\:\left(t,\bm{x}\right)\in[0,\infty)\times\mathcal{X}.
\label{hinfinityPDE}    
\end{align}
If $\{(\lambda_i,\psi_i)\}$ is the eigenvalue-eigenfunction pair sequence for problem \eqref{infiniteTimeDirichletPDEBVP}, then the expansion $h_{\infty}(t,\bm{x})=\sum_{i}c_{i}e^{\lambda_{i}t}\psi_{i}(\bm{x})$, $c_i\in\mathbb{R}$, and thus \eqref{infiniteTimePDE}, is consistent with \eqref{hinfinityPDE}, as verified by direct substitution of the expansion in \eqref{hinfinityPDE}. 

In the infinite horizon case, the only boundary condition is the $T\uparrow\infty$ limit of \eqref{finiteTimeBC}, which is 
\begin{align}
h_{\infty}(t,\bm{x})=0\:\forall(t,\bm{x})\in[0,\infty)\times\partial\mathcal{X}.
\label{hinfinityBC}    
\end{align}  
Combining \eqref{hinfinityBC} with \eqref{AsymptoticExpansion}, we get $e^{\lambda_0 t}\psi_0(\bm{x})+o\left(e^{\lambda_0 t}\right)=0$ $\forall(t,\bm{x})\in[0,\infty)\times\partial\mathcal{X}$, or equivalently $\psi_0(\bm{x})=0$ $\forall(t,\bm{x})\in[0,\infty)\times\partial\mathcal{X}$. The latter is indeed \eqref{infiniteTimeBC}.\qedsymbol


\subsection{Proof of Theorem \ref{Thm:ControllerInfiniteTime}}
The proof is similar to that of Theorem \ref{Thm:ControllerFiniteTime}. Equating the process $\bm{x}_t^{\bm{u}}$ governed by \eqref{ControlledSDE}, to the conditioned process $\widetilde{\bm{x}}_t$ governed by \eqref{InFiniteTimeSDE}, the necessary and sufficient condition becomes the solvability of the linear system $\bm{Gu}=\bm{s}_{\infty}$, and the statement follows. \qedsymbol


\subsection{Proof of Theorem \ref{Thm:InverseOptimalityFiniteHorizon}}\label{App:ProofInverseOptimalityThmFiniteHorizon}
Following standard dynamic programming computation, the first order conditions for optimality for \eqref{StochasticOCP} is formally given by the HJB PDE initial value problem:
\begin{subequations}
\begin{align}
    \dfrac{\partial V}{\partial t} &= - \frac{1}{2} (\nabla_{\bm{x}} V)^{\top}\bm{G} \bm{G}^{\top} \nabla_{\bm{x}} V- \langle \nabla_{\bm{x}} V, \bm{f}\rangle\nonumber\\&\quad- \frac{1}{2}\langle \bm{\Sigma}, \hess_{\bm{x}}(V) \rangle + q(\bm{x}),\label{HJBPDE}\\
    V(T,\bm{x}) &= 0,
\end{align}
\label{HJBPDEIVP}
\end{subequations}
and the associated optimal control
\begin{align}
\bm{u}^{\mathrm{opt}}=\bm{G}^{\top}\nabla_{\bm{x}}V.
\label{OptimalControl}    
\end{align}

To establish the equivalence between \eqref{HJBPDEIVP} and \eqref{finiteTimeDirichletPDEBVP}, the idea is to recognize that the value function $V(t,\bm{x})$ in \eqref{HJBPDEIVP}, and the $h_{T}(t,\bm{x})$ in \eqref{finiteTimeDirichletPDEBVP}, are precisely related by Fleming's logarithmic transform \cite{fleming2005logarithmic}
\begin{align}
V(t,\bm{x}) = \log h_{T}(t,\bm{x}),
\label{hisExpV}
\end{align}
which is well-defined since $h_{T}(t,\bm{x})>0$ $\forall (t,\bm{x})\in[0,T]\times\mathcal{X}$. To verify this, note from \eqref{hisExpV} that
\begin{align}
\left(\exp (-V)\right)\dfrac{\partial h_{T}}{\partial t} = \dfrac{\partial V}{\partial t},
\label{partilhpartialt}
\end{align}
and using \eqref{defGenerator}, we have
\begin{align}
&\gen h_{T}\nonumber\\ 
&= \left(\exp V\right)\langle\bm{f},\nabla_{\bm{x}}V\rangle + \frac{1}{2}\langle \bm{\Sigma},\nabla_{\bm{x}}\circ\nabla_{\bm{x}}\left(\exp V\right)\rangle\nonumber\\
&= \left(\exp V\right)\langle\bm{f},\nabla_{\bm{x}}V\rangle + \frac{1}{2}\langle \bm{\Sigma},\nabla_{\bm{x}}\left(\left(\exp V\right)\nabla_{\bm{x}}V\right)\rangle\nonumber\\
&= \left(\exp V\right)\bigg\{\langle\bm{f},\nabla_{\bm{x}}V\rangle + \frac{1}{2}\langle \bm{\Sigma}, \left(\nabla_{\bm{x}}V\right)\left(\nabla_{\bm{x}}V\right)^{\top} + \hess_{\bm{x}}V\rangle\bigg\}.\nonumber\\
\label{LhT}
\end{align}

So if $\bm{G}\bm{G}^{\top}=\bm{\Sigma}$ for all $(t,\bm{x})\in [0,T]\times\mathcal{X}$, then \eqref{partilhpartialt}-\eqref{LhT} allow rewriting the PDE initial value problem \eqref{HJBPDEIVP} as 
\begin{subequations}
\begin{align}
    &\dfrac{\partial h_{T}}{\partial t} + \gen h_{T} - q(\bm{x})h_{T} = 0\quad\forall \bm{x}\in\mathcal{X}\cup\partial\mathcal{X},
    \label{hTPDEgeneral}\\
    &h_{T}(T,\bm{x}) = 1\quad\forall \bm{x}\in\mathcal{X}.
    \label{hTterminal} 
\end{align}
\label{general_pde_ivp}
\end{subequations}
Notice that the terminal condition \eqref{hTterminal} is indeed \eqref{InitialCondition}.

Next, we specialize \eqref{hTPDEgeneral} for two cases. For the case $(t,\bm{x})\in [0,T]\times\mathcal{X}$, we have $q=0$ from \eqref{state_cost}, and \eqref{hTPDEgeneral} reduces to \eqref{finiteTimePDE}. For the case $(t,\bm{x})\in [0,T]\times\partial\mathcal{X}$, we resort to the Feynman-Kac representation for the solution of \eqref{hTPDEgeneral}:
\begin{align}
    h_{T}(t, \bm{x}) = \mathbb{E}\left[ \exp{\left( - \int_{t}^{T} q(\bm{x}_{\tau}^{\bm{u}}) d\tau \right)}h_{T}(T, \bm{x})\vert \bm{x}_{\tau}^{\bm{u}} = \bm{x} \right].
\label{feyman-kac}
\end{align}
Since $q(\bm{x}) = \infty$ for $\bm{x} \in \partial \mathcal{X}$, so from \eqref{feyman-kac}, we get $h_T(t, \bm{x}) = 0$ when $\bm{x} \in \partial \mathcal{X}$, thus recovering the absorbing boundary condition \eqref{finiteTimeBC}.

Finally, combining \eqref{OptimalControl} and \eqref{hisExpV}, we obtain $$\bm{u}^{\mathrm{opt}}=\bm{G}^{\top}\nabla_{\bm{x}}\log h_{T},$$
which satisfies \eqref{StaticLinearSystem} provided $\bm{GG}^{\top}=\bm{\Sigma}$ for all $(t,\bm{x})\in [0,T]\times\mathcal{X}$.
\qedsymbol


\subsection{Derivation of \eqref{hTExample2}-\eqref{eq:coefficient caculation}}\label{Derivation of heat equation in annular set}
For the readers' convenience, we give here the computational details whose bits and pieces can be found in Strum-Liouville eigenvalue problem literature.

For $t\in[0,T]$, let $\tau:=T-t$, and $g_{T}(\tau,r,\theta):=h_{T}(t,r,\theta)$. Then $g_{T}$ solves the PDE
\begin{align}
\frac{\partial g_{T}}{\partial\tau} = \frac{1}{2}\left(\frac{\partial^2 g_T}{\partial r^2}+\frac{1}{r}\dfrac{\partial g_T}{\partial r}+\frac{1}{r^2}\frac{\partial^2 g_T}{\partial 
 \theta^2}\right),
\label{gTPDE}
\end{align}
and \eqref{Example2InitialCondition} implies 
\begin{align}
g_T(0,r,\theta) = \begin{cases}1& \text{if} \quad(r,\theta)\in {\mathrm{Annulus}}(r_1,r_2),\\
    0 & \text{otherwise}.\end{cases}
\label{gTinitialcondition}    
\end{align}

Substituting the separation-of-variables ansatz $g_{T}(\tau,r,\theta) = \Gamma(\tau)R(r)\Theta(\theta)$ in \eqref{gTPDE}, we get
$$\frac{1}{r} \frac{(rR')'}{R} + \frac{1}{r^2} \frac{\Theta''}{\Theta} = 2 \frac{\Gamma'}{\Gamma}=-\lambda,$$
for some constant $\lambda>0$. This yields
\begin{subequations}
\begin{align}
\Gamma(\tau)&=\exp\left(-\frac{1}{2}\lambda\tau\right),\label{eq:T_solution}\\
-\frac{\Theta''}{\Theta} &=\mu,\label{eq:theta_solution_pde}\\
\lambda r^2 + r \frac{(rR')'}{R}&=\mu,\label{eq:R_solution_pde}
\end{align}
\end{subequations}
where $\mu$ is a real constant. 

To solve \eqref{eq:theta_solution_pde} and to determine $\mu$ along the way, we recall that $\theta$ is $2\pi$ periodic, and require the same for the function $\Theta(\cdot)$, i.e., $\Theta(\theta + 2n\pi) = \Theta(\theta)\:\forall n\in\mathbb{Z}$. To enforce this, we append \eqref{eq:theta_solution_pde} with periodic boundary conditions, giving 
\begin{subequations}
\begin{align}
&\Theta'' + \mu \Theta = 0, \quad 0 < \theta < 2\pi,\\ 
&\Theta(0) = \Theta(2\pi), \quad \Theta'(0) = \Theta'(2\pi).
\end{align}
\label{eq:theta_ODE}
\end{subequations}
The periodic Sturm-Liouville problem \eqref{eq:theta_ODE} has eigenvalue-eigenfunction pairs 
\begin{align*}
&\{\{\mu_{0},\Theta_{0}(\theta)\}, \{\mu_{n},\Theta_{1n}(\theta)\}, \{\mu_{n},\Theta_{2n}(\theta)\}\}\\
=&\{\{0, 1\},\{n^{2},\cos n\theta\}, \{n^{2},\sin n\theta\}\}, \quad n\in\mathbb{N}.    
\end{align*}

The problem \eqref{eq:R_solution_pde} with $\mu_n=n^2$ becomes a singular Sturm-Liouville problem:
\begin{subequations}
\begin{align}
    &\left[ rR'\right ]'-\frac{n^2}{r}R+\lambda r R=0,\label{eq:Sturm-Liouville}\\
    &R(r_2)=0,\; R(r) \;\text{bounded} \;\text{as} \;r\rightarrow 0,\label{eq:Sturm-LiouvilleBC}
\end{align}
\label{eq:R_ode}
\end{subequations}
which admits solutions only for $\lambda>0$. To compute these solutions, we perform change of variable $r \mapsto \tilde{r}:=\sqrt{\lambda} r$ and let $\tilde{R}(\tilde{r}):=R(r)$, thus transforming \eqref{eq:Sturm-Liouville} to Bessel's ODE \cite[Ch. 9.1]{abramowitz1972handbook} of order $n$, given by
\begin{align}\label{eq:Bessel equation}
    \tilde{r}^2 \tilde{R}^{\prime\prime} + \tilde{r} \tilde{R}^{\prime} + \left(\tilde{r}^2 - n^2\right) \tilde{R} = 0,
\end{align}
wherein $^{\prime}$ denotes derivative w.r.t. $\tilde{r}$. The general solution of \eqref{eq:Bessel equation} is $\tilde{R}\left(\tilde{r}\right) = \alpha J_{n}(\tilde{r}) + \beta Y_{n}(\tilde{r})$, where $J_{n},Y_{n}$ denote the Bessel functions of the first and second kind of order $n\in\mathbb{N}$, respectively, and the constants $\alpha,\beta\in\mathbb{R}$. Hence 
\begin{align}
R(r) = \tilde{R}\left(\tilde{r}=\sqrt{\lambda} r\right) = \alpha J_{n}\left(\sqrt{\lambda} r\right) + \beta Y_{n}\left(\sqrt{\lambda} r\right).
\label{GeneralSolutionR}
\end{align}
Since $Y_{n}(\cdot)$ is unbounded as its argument approaches zero \cite[Ch. 9.1]{abramowitz1972handbook}, so from \eqref{eq:Sturm-LiouvilleBC}, we find
\begin{align}
\beta=0, \quad \alpha J_{n}\left(\sqrt{\lambda}\:r_2\right)=0.
\label{alphabeta}    
\end{align}
Then, for \eqref{GeneralSolutionR} to admit nontrivial solution, we must have $\alpha\neq 0$, which together with \eqref{alphabeta} implies $J_{n}\left(\sqrt{\lambda}\:r_2\right)=0$. 

It is known \cite[Ch. 15]{watson1922treatise} that for each $n\in\mathbb{N}$, the zeros of $J_{n}(\cdot)$ are all simple and positive. For fixed $n\in\mathbb{N}$, denoting the $k$\textsuperscript{th} zero of $J_{n}(\cdot)$ as as $z^n_k$, where $k\in\mathbb{N}$, the eigenvalues solving $J_{n}\left(\sqrt{\lambda}\:r_2\right)=0$, and thus the Sturm-Lioville problem \eqref{eq:R_ode}, must then be $\lambda^n_k=\left(\frac{z^n_k}{r_2}\right)^2$. The corresponding eigenfunctions are 
\begin{align}
\phi^n_k(r)=J_{n}\left(\sqrt{\lambda^n_k}\:r\right) = J_n\left( \frac{z^n_k r}{r_2} \right).
\label{defEigFunction}    
\end{align}

Combining the solutions of \eqref{eq:T_solution}, \eqref{eq:theta_ODE} and \eqref{eq:R_ode}, the general solution for \eqref{gTPDE} can now be written as a linear combination\footnote{since the PDE \eqref{gTPDE} and its boundary conditions are linear and homogeneous}:
\begin{align}
    &g_T(\tau=T-t,r,\theta)=h_{T}(t,r,\theta)\nonumber\\
    &=\sum\limits_{k=1}^{\infty} c^0_{k}\phi^0_k(r)\exp\left(-\frac{1}{2}(T-t)\left(\frac{z^{0}_{k}}{r_2}\right)^2 \right)\nonumber\\
    &+\hspace{-3pt}\sum\limits_{n=1}^{\infty}
    \hspace{-1pt}\sum\limits_{k=1}^{\infty} \hspace{-3pt}\phi^n_k(r)\left(c^n_k(r)\cos n\theta\hspace{-2pt}+\hspace{-2pt}d^n_k(r)\sin n\theta\right)\nonumber\\
    &\qquad\qquad\times\exp\left(-\frac{1}{2}(T-t)\!\left(\dfrac{z^{0}_{k}}{r_2}\right)^{\!\!2}\right),
    \label{eq:general solution}
\end{align}
wherein the $\phi^n_k(\cdot)$ for all $n\in\mathbb{N}\cup\{0\}$, $k\in\mathbb{N}$, are given by \eqref{defEigFunction}, and the constants $c_{k}^{0}, c_{k}^{n}, d_{k}^{n}$ for $n,k\in\mathbb{N}$, remain to be determined from the boundary conditions.

Rewriting the terminal condition \eqref{Example2InitialCondition} as the Fourier series expansion 
\begin{align}
    h_T(T,r, \theta)\hspace{-2pt}=\hspace{-2pt}a_0(r) \hspace{-2pt}+\hspace{-2pt} \sum_{n=1}^\infty \left( a_n(r) \cos n\theta + b_n(r) \sin n\theta \right),
    \label{TerminalConditionFourierSeries}
\end{align}
we find
\begin{subequations}
\begin{align}
&a_0(r) \hspace{-2pt}=\hspace{-2pt} \frac{1}{2\pi} \int_0^{2\pi} h_T(T,r, \theta) \, d\theta\hspace{-2pt}=\hspace{-2pt}\begin{cases}
0, & \text{if } \; 0\le r< r_1, \\
1, & \text{if } \; r_1\le r\le r_2,
\end{cases}\label{azero}\\
&a_n(r)\hspace{-2pt}=\hspace{-2pt} \frac{1}{\pi} \int_0^{2\pi} h_T(T,r, \theta)\cos n\theta\:d\theta\hspace{-2pt}=0,\quad n\in\mathbb{N},\label{anzero}\\
&b_n(r)\hspace{-2pt}=\hspace{-2pt} \frac{1}{\pi} \int_0^{2\pi} h_T(T,r, \theta)\sin n\theta\:d\theta\hspace{-2pt}=0,\quad n\in\mathbb{N}.\label{bnzero}
\end{align}
\label{TerminalCoeff}
\end{subequations}
Evaluating the general solution \eqref{eq:general solution} at $t=T$ and comparing the same with \eqref{TerminalConditionFourierSeries}, we get the expansions of $a_0,a_n,b_n$ in the eigenfunction basis for the Sturm-Liouville problem \eqref{eq:R_ode}:
\begin{subequations}
\begin{align}
&a_{0}(r) = \sum\limits_{k=1}^{\infty} c^0_{k}\phi^0_k(r),\\
&a_{n}(r) = \sum\limits_{k=1}^{\infty} c^n_{k}\phi^n_k(r), \; b_{n}(r) = \sum\limits_{k=1}^{\infty} d^n_{k}\phi^n_k(r),\; n\in\mathbb{N}.
\end{align}
\label{a0anbn}
\end{subequations}

Note that for the Sturm-Liouville problem \eqref{eq:R_ode}, the eigenfunctions must be orthogonal (in weighted $L^2$ sense) over the domain ${\rm{Disk}}\left({\bm{0},r_2}\right)$ with weight $w(r)=r$. Thus, taking the weighted $L^2$ inner product\footnote{The weighted $L^2$ inner product between $\varphi(r)$ and $\psi(r)$ w.r.t. the weight function $w(r)$, is $\langle\varphi(r),\psi(r)\rangle_{w(r)}:=\int\varphi(r)\psi(r)w(r)\:\differential r$.} $\langle\cdot,\cdot\rangle_{w(r)}$ to both sides of \eqref{a0anbn} with respective eigenfunctions, yield
\begin{subequations}
\begin{align}
&c_{k}^{0} = \frac{ \int_{0}^{r_2} a_0(r) \phi^0_k(r) r\differential r }{ \int_{0}^{r_2} \left(\phi^0_k(r)\right)^2 r\differential r }\stackrel{\eqref{azero}}{=}\frac{ \int_{r_1}^{r_2} \phi^0_k(r) r\differential r }{ \int_{0}^{r_2} \left(\phi^0_k(r)\right)^2 r\differential r}\quad \forall k\in\mathbb{N},\\
&c_{k}^{n} = \frac{ \int_{0}^{r_2} a_n(r) \phi^0_k(r) r\differential r }{ \int_{0}^{r_2} \left(\phi^0_k(r)\right)^2 r\differential r } \stackrel{\eqref{anzero}}{=}0\quad \forall n,k\in\mathbb{N},\\ 
&d_{k}^{n}=\frac{ \int_{0}^{r_2} b_n(r) \phi^0_k(r) r\differential r }{ \int_{0}^{r_2} \left(\phi^0_k(r)\right)^2 r\differential r }\stackrel{\eqref{bnzero}}{=}0\quad \forall n,k\in\mathbb{N}.
\end{align}
\label{ckdk}
\end{subequations}
Substituting the coefficients $c^0_k,c^n_k,d^n_k$ from \eqref{ckdk} back in \eqref{eq:general solution}, we obtain \eqref{hTExample2}-\eqref{eq:coefficient caculation}.


\section*{References}
\bibliographystyle{IEEEtran}
\bibliography{references.bib}

\begin{thebibliography}{10}
\providecommand{\url}[1]{#1}
\csname url@samestyle\endcsname
\providecommand{\newblock}{\relax}
\providecommand{\bibinfo}[2]{#2}
\providecommand{\BIBentrySTDinterwordspacing}{\spaceskip=0pt\relax}
\providecommand{\BIBentryALTinterwordstretchfactor}{4}
\providecommand{\BIBentryALTinterwordspacing}{\spaceskip=\fontdimen2\font plus
\BIBentryALTinterwordstretchfactor\fontdimen3\font minus \fontdimen4\font\relax}
\providecommand{\BIBforeignlanguage}[2]{{%
\expandafter\ifx\csname l@#1\endcsname\relax
\typeout{** WARNING: IEEEtran.bst: No hyphenation pattern has been}%
\typeout{** loaded for the language `#1'. Using the pattern for}%
\typeout{** the default language instead.}%
\else
\language=\csname l@#1\endcsname
\fi
#2}}
\providecommand{\BIBdecl}{\relax}
\BIBdecl

\bibitem{oksendal2013stochastic}
B.~Oksendal, \emph{Stochastic differential equations: an introduction with applications}.\hskip 1em plus 0.5em minus 0.4em\relax Springer Science \& Business Media, 2013.

\bibitem{friedman2008partial}
A.~Friedman, \emph{Partial differential equations of parabolic type}.\hskip 1em plus 0.5em minus 0.4em\relax Courier Dover Publications, 2008.

\bibitem{karatzas2014brownian}
I.~Karatzas and S.~Shreve, \emph{Brownian motion and stochastic calculus}.\hskip 1em plus 0.5em minus 0.4em\relax springer, 2014, vol. 113.

\bibitem{ames2016control}
A.~D. Ames, X.~Xu, J.~W. Grizzle, and P.~Tabuada, ``Control barrier function based quadratic programs for safety critical systems,'' \emph{IEEE Transactions on Automatic Control}, vol.~62, no.~8, pp. 3861--3876, 2016.

\bibitem{son2019safety}
T.~D. Son and Q.~Nguyen, ``Safety-critical control for non-affine nonlinear systems with application on autonomous vehicle,'' in \emph{2019 IEEE 58th Conference on Decision and Control (CDC)}.\hskip 1em plus 0.5em minus 0.4em\relax IEEE, 2019, pp. 7623--7628.

\bibitem{black2023safety}
M.~Black, G.~Fainekos, B.~Hoxha, D.~Prokhorov, and D.~Panagou, ``Safety under uncertainty: Tight bounds with risk-aware control barrier functions,'' \emph{arXiv preprint arXiv:2304.01040}, 2023.

\bibitem{xiao2023safe}
W.~Xiao, C.~G. Cassandras, and C.~Belta, \emph{Safe autonomy with control barrier functions: theory and applications}.\hskip 1em plus 0.5em minus 0.4em\relax Springer, 2023.

\bibitem{esfahani2016stochastic}
P.~M. Esfahani, D.~Chatterjee, and J.~Lygeros, ``The stochastic reach-avoid problem and set characterization for diffusions,'' \emph{Automatica}, vol.~70, pp. 43--56, 2016.

\bibitem{vinod2021stochastic}
A.~P. Vinod and M.~M. Oishi, ``Stochastic reachability of a target tube: Theory and computation,'' \emph{Automatica}, vol. 125, p. 109458, 2021.

\bibitem{keller2024mean}
C.~Keller, ``Mean viability theorems and second-order {H}amilton--{J}acobi equations,'' \emph{SIAM Journal on Control and Optimization}, vol.~62, no.~3, pp. 1615--1642, 2024.

\bibitem{colonius2008near}
F.~Colonius, T.~Gayer, and W.~Kliemann, ``Near invariance for {M}arkov diffusion systems,'' \emph{SIAM Journal on Applied Dynamical Systems}, vol.~7, no.~1, pp. 79--107, 2008.

\bibitem{aubin2009viability}
J.-P. Aubin, \emph{Viability Theory}.\hskip 1em plus 0.5em minus 0.4em\relax Springer Science \& Business Media, 2009.

\bibitem{aubin1998viability}
J.-P. Aubin and G.~D. Prato, ``The viability theorem for stochastic differential inclusions,'' \emph{Stochastic Analysis and Applications}, vol.~16, no.~1, pp. 1--15, 1998.

\bibitem{bardi2002geometric}
M.~Bardi and R.~Jensen, ``A geometric characterization of viable sets for controlled degenerate diffusions,'' \emph{Set-Valued Analysis}, vol.~10, pp. 129--141, 2002.

\bibitem{bardi2005almost}
M.~Bardi and A.~Cesaroni, ``Almost sure stabilizability of controlled degenerate diffusions,'' \emph{SIAM journal on control and optimization}, vol.~44, no.~1, pp. 75--98, 2005.

\bibitem{quincampoix2005stochastic}
M.~Quincampoix and C.~Rainer, ``Stochastic control and compatible subsets of constraints,'' \emph{Bulletin des sciences mathematiques}, vol. 129, no.~1, pp. 39--55, 2005.

\bibitem{da2001stochastic}
G.~Da~Prato and H.~Frankowska, ``Stochastic viability for compact sets in terms of the distance function,'' \emph{Dynamic Systems and Applications}, vol.~10, no.~2, pp. 177--184, 2001.

\bibitem{da2007stochastic}
------, ``Stochastic viability of convex sets,'' \emph{Journal of mathematical analysis and applications}, vol. 333, no.~1, pp. 151--163, 2007.

\bibitem{da2004invariance}
------, ``Invariance of stochastic control systems with deterministic arguments,'' \emph{Journal of differential equations}, vol. 200, no.~1, pp. 18--52, 2004.

\bibitem{buckdahn2010another}
R.~Buckdahn, M.~Quincampoix, C.~Rainer, and J.~Teichmann, ``Another proof for the equivalence between invariance of closed sets with respect to stochastic and deterministic systems,'' \emph{Bulletin des sciences mathematiques}, vol. 134, no.~2, pp. 207--214, 2010.

\bibitem{clark2021control}
A.~Clark, ``Control barrier functions for stochastic systems,'' \emph{Automatica}, vol. 130, p. 109688, 2021.

\bibitem{tamba2021notion}
T.~A. Tamba, B.~Hu, and Y.~Y. Nazaruddin, ``On a notion of stochastic zeroing barrier function,'' in \emph{2021 American Control Conference (ACC)}.\hskip 1em plus 0.5em minus 0.4em\relax IEEE, 2021, pp. 1318--1321.

\bibitem{nishimura2024control}
Y.~Nishimura and K.~Hoshino, ``Control barrier functions for stochastic systems and safety-critical control designs,'' \emph{IEEE Transactions on Automatic Control}, vol.~69, no.~11, pp. 8088--8095, 2024.

\bibitem{santoyo2021barrier}
C.~Santoyo, M.~Dutreix, and S.~Coogan, ``A barrier function approach to finite-time stochastic system verification and control,'' \emph{Automatica}, vol. 125, p. 109439, 2021.

\bibitem{wang2021safety}
C.~Wang, Y.~Meng, S.~L. Smith, and J.~Liu, ``Safety-critical control of stochastic systems using stochastic control barrier functions,'' in \emph{2021 60th IEEE Conference on Decision and Control (CDC)}.\hskip 1em plus 0.5em minus 0.4em\relax IEEE, 2021, pp. 5924--5931.

\bibitem{xue2023reach}
B.~Xue, N.~Zhan, and M.~Fr{\"a}nzle, ``Reach-avoid analysis for polynomial stochastic differential equations,'' \emph{IEEE Transactions on Automatic Control}, vol.~69, no.~3, pp. 1882--1889, 2023.

\bibitem{prajna2007framework}
S.~Prajna, A.~Jadbabaie, and G.~J. Pappas, ``A framework for worst-case and stochastic safety verification using barrier certificates,'' \emph{IEEE Transactions on Automatic Control}, vol.~52, no.~8, pp. 1415--1428, 2007.

\bibitem{steinhardt2012finite}
J.~Steinhardt and R.~Tedrake, ``Finite-time regional verification of stochastic non-linear systems,'' \emph{The International Journal of Robotics Research}, vol.~31, no.~7, pp. 901--923, 2012.

\bibitem{mathiesen2022safety}
F.~B. Mathiesen, S.~C. Calvert, and L.~Laurenti, ``Safety certification for stochastic systems via neural barrier functions,'' \emph{IEEE Control Systems Letters}, vol.~7, pp. 973--978, 2022.

\bibitem{doob1957conditional}
J.~L. Doob, ``Conditional {B}rownian motion and the boundary limits of harmonic functions,'' \emph{Bulletin de la Soci{\'e}t{\'e} Math{\'e}matique de France}, vol.~85, pp. 431--458, 1957.

\bibitem{doob1984classical}
------, \emph{Classical potential theory and its probabilistic counterpart}.\hskip 1em plus 0.5em minus 0.4em\relax Springer, 1984, vol. 262.

\bibitem{song2019generative}
Y.~Song and S.~Ermon, ``Generative modeling by estimating gradients of the data distribution,'' \emph{Advances in neural information processing systems}, vol.~32, 2019.

\bibitem{song2020score}
Y.~Song, J.~Sohl-Dickstein, D.~P. Kingma, A.~Kumar, S.~Ermon, and B.~Poole, ``Score-based generative modeling through stochastic differential equations,'' \emph{arXiv preprint arXiv:2011.13456}, 2020.

\bibitem{vahdat2021score}
A.~Vahdat, K.~Kreis, and J.~Kautz, ``Score-based generative modeling in latent space,'' \emph{Advances in neural information processing systems}, vol.~34, pp. 11\,287--11\,302, 2021.

\bibitem{jo2022score}
J.~Jo, S.~Lee, and S.~J. Hwang, ``Score-based generative modeling of graphs via the system of stochastic differential equations,'' in \emph{International conference on machine learning}.\hskip 1em plus 0.5em minus 0.4em\relax PMLR, 2022, pp. 10\,362--10\,383.

\bibitem{lee2023score}
J.~S. Lee, J.~Kim, and P.~M. Kim, ``Score-based generative modeling for de novo protein design,'' \emph{Nature Computational Science}, vol.~3, no.~5, pp. 382--392, 2023.

\bibitem{lee2023convergence}
H.~Lee, J.~Lu, and Y.~Tan, ``Convergence of score-based generative modeling for general data distributions,'' in \emph{International Conference on Algorithmic Learning Theory}.\hskip 1em plus 0.5em minus 0.4em\relax PMLR, 2023, pp. 946--985.

\bibitem{bakry2014analysis}
D.~Bakry, I.~Gentil, and M.~Ledoux, \emph{Analysis and geometry of {M}arkov diffusion operators}.\hskip 1em plus 0.5em minus 0.4em\relax Springer, 2014, vol. 103.

\bibitem{gilbarg1977elliptic}
D.~Gilbarg and N.~S. Trudinger, \emph{Elliptic partial differential equations of second order}.\hskip 1em plus 0.5em minus 0.4em\relax Springer, 1977, vol. 224, no.~2.

\bibitem{kunita1969absolute}
H.~Kunita, ``Absolute continuity of {M}arkov processes and generators,'' \emph{Nagoya Mathematical Journal}, vol.~36, pp. 1--26, 1969.

\bibitem{ledoux2000geometry}
M.~Ledoux, ``The geometry of {M}arkov diffusion generators,'' in \emph{Annales de la Facult{\'e} des sciences de Toulouse: Math{\'e}matiques}, vol.~9, no.~2, 2000, pp. 305--366.

\bibitem{rogers2000diffusions}
L.~C.~G. Rogers and D.~Williams, \emph{Diffusions, Markov processes, and martingales: It{\^o} calculus}.\hskip 1em plus 0.5em minus 0.4em\relax Cambridge university press, 2000, vol.~2.

\bibitem{pinsky1985convergence}
R.~G. Pinsky, ``On the convergence of diffusion processes conditioned to remain in a bounded region for large time to limiting positive recurrent diffusion processes,'' \emph{The Annals of Probability}, pp. 363--378, 1985.

\bibitem{deblassie1988doob}
R.~D. DeBlassie, ``Doob's conditioned diffusions and their lifetimes,'' \emph{The Annals of Probability}, pp. 1063--1083, 1988.

\bibitem{chetrite2015nonequilibrium}
R.~Chetrite and H.~Touchette, ``Nonequilibrium {M}arkov processes conditioned on large deviations,'' in \emph{Annales Henri Poincar{\'e}}, vol.~16.\hskip 1em plus 0.5em minus 0.4em\relax Springer, 2015, pp. 2005--2057.

\bibitem{evans2022partial}
L.~C. Evans, \emph{Partial differential equations}.\hskip 1em plus 0.5em minus 0.4em\relax American Mathematical Society, 2022, vol.~19.

\bibitem{krein1948linear}
M.~G. Krein and M.~A. Rutman, ``Linear operators leaving invariant a cone in a {B}anach space,'' \emph{Uspekhi Matematicheskikh Nauk}, vol.~3, no.~1, pp. 3--95, 1948.

\bibitem{berestycki1994principal}
H.~Berestycki, L.~Nirenberg, and S.~S. Varadhan, ``The principal eigenvalue and maximum principle for second-order elliptic operators in general domains,'' \emph{Communications on Pure and Applied Mathematics}, vol.~47, no.~1, pp. 47--92, 1994.

\bibitem{berestycki2015generalizations}
H.~Berestycki and L.~Rossi, ``Generalizations and properties of the principal eigenvalue of elliptic operators in unbounded domains,'' \emph{Communications on Pure and Applied Mathematics}, vol.~68, no.~6, pp. 1014--1065, 2015.

\bibitem{sheu1984stochastic}
S.-J. Sheu, ``Stochastic control and principal eigenvalue,'' \emph{Stochastics}, vol.~11, no. 3-4, pp. 191--211, 1984.

\bibitem{HabermanBook}
R.~Haberman, \emph{{Applied Partial Differential Equations with Fourier Series and Boundary Value Problems}}, 5th~ed.\hskip 1em plus 0.5em minus 0.4em\relax Pearson, 2018.

\bibitem{strauss2007partial}
W.~A. Strauss, \emph{Partial differential equations: An introduction}, 2nd~ed.\hskip 1em plus 0.5em minus 0.4em\relax John Wiley \& Sons, 2007.

\bibitem{abramowitz1972handbook}
M.~Abramowitz and I.~A.~E. Stegun, \emph{Handbook of Mathematical Functions with Formulas, Graphs, and Mathematical Tables.}\hskip 1em plus 0.5em minus 0.4em\relax National Bureau of Standards Applied Mathematics Series 55, Tenth Printing, 1972.

\bibitem{brockett1979stochastic}
R.~W. Brockett and J.~C. Willems, ``Stochastic control and the second law of thermodynamics,'' in \emph{1978 IEEE conference on decision and control including the 17th symposium on adaptive processes}.\hskip 1em plus 0.5em minus 0.4em\relax IEEE, 1979, pp. 1007--1011.

\bibitem{liberzon2000nonlinear}
D.~Liberzon and R.~W. Brockett, ``Nonlinear feedback systems perturbed by noise: Steady-state probability distributions and optimal control,'' \emph{IEEE Transactions on Automatic Control}, vol.~45, no.~6, pp. 1116--1130, 2000.

\bibitem{yong1997relations}
J.~Yong, ``Relations among {ODE}s, {PDE}s, {FSDE}s, {BSDE}s, and {FBSDE}s,'' in \emph{Proceedings of the 36th IEEE Conference on Decision and Control}, vol.~3.\hskip 1em plus 0.5em minus 0.4em\relax IEEE, 1997, pp. 2779--2784.

\bibitem{yong2012stochastic}
J.~Yong and X.~Y. Zhou, \emph{Stochastic controls: {H}amiltonian systems and {HJB} equations}.\hskip 1em plus 0.5em minus 0.4em\relax Springer Science \& Business Media, 2012, vol.~43.

\bibitem{pra1990markov}
P.~D. Pra and M.~Pavon, ``On the {M}arkov processes of {S}chr{\"o}dinger, the {F}eynman-{K}ac formula and stochastic control,'' in \emph{Realization and Modelling in System Theory: Proceedings of the International Symposium MTNS-89, Volume I}.\hskip 1em plus 0.5em minus 0.4em\relax Springer, 1990, pp. 497--504.

\bibitem{theodorou2010learning}
E.~Theodorou, J.~Buchli, and S.~Schaal, ``Learning policy improvements with path integrals,'' in \emph{Proceedings of the Thirteenth International Conference on Artificial Intelligence and Statistics}.\hskip 1em plus 0.5em minus 0.4em\relax JMLR Workshop and Conference Proceedings, 2010, pp. 828--835.

\bibitem{nodozi2022schrodinger}
I.~Nodozi and A.~Halder, ``Schr{\"o}dinger meets {K}uramoto via {Feynman-Kac}: Minimum effort distribution steering for noisy nonuniform {K}uramoto oscillators,'' in \emph{IEEE 61st Conference on Decision and Control (CDC)}.\hskip 1em plus 0.5em minus 0.4em\relax IEEE, 2022, pp. 2953--2960.

\bibitem{saad2011numerical}
Y.~Saad, \emph{Numerical methods for large eigenvalue problems: revised edition}.\hskip 1em plus 0.5em minus 0.4em\relax SIAM, 2011.

\bibitem{grebenkov2013geometrical}
D.~S. Grebenkov and B.-T. Nguyen, ``Geometrical structure of {L}aplacian eigenfunctions,'' \emph{SIAM Review}, vol.~55, no.~4, pp. 601--667, 2013.

\bibitem{henrot2006extremum}
A.~Henrot, \emph{Extremum problems for eigenvalues of elliptic operators}.\hskip 1em plus 0.5em minus 0.4em\relax Springer Science \& Business Media, 2006.

\bibitem{fulton2013representation}
W.~Fulton and J.~Harris, \emph{Representation theory: a first course}.\hskip 1em plus 0.5em minus 0.4em\relax Springer Science \& Business Media, 2013, vol. 129.

\bibitem{grabiner1999brownian}
D.~J. Grabiner, ``Brownian motion in a {W}eyl chamber, non-colliding particles, and random matrices,'' in \emph{Annales de l'IHP Probabilit{\'e}s et statistiques}, vol.~35, no.~2, 1999, pp. 177--204.

\bibitem{katori2007noncolliding}
M.~Katori and H.~Tanemura, ``Noncolliding {B}rownian motion and determinantal processes,'' \emph{Journal of statistical physics}, vol. 129, pp. 1233--1277, 2007.

\bibitem{karlin1959coincidence}
S.~Karlin and J.~McGregor, ``Coincidence probabilities.'' \emph{Pacific J. Math.}, vol.~9, no.~4, pp. 1141--1164, 1959.

\bibitem{dyson1962brownian}
F.~J. Dyson, ``A {B}rownian-motion model for the eigenvalues of a random matrix,'' \emph{Journal of Mathematical Physics}, vol.~3, no.~6, pp. 1191--1198, 1962.

\bibitem{tao2012topics}
T.~Tao, \emph{Topics in random matrix theory}.\hskip 1em plus 0.5em minus 0.4em\relax American Mathematical Soc., 2012, vol. 132.

\bibitem{johansson2001universality}
K.~J. Johansson, ``Universality of the local spacing distribution in certain ensembles of {H}ermitian {W}igner matrices,'' \emph{Communications in Mathematical Physics}, vol. 215, pp. 683--705, 2001.

\bibitem{konig2001eigenvalues}
W.~K{\"o}nig and N.~O'Connell, ``Eigenvalues of the {L}aguerre process as non-colliding squared {B}essel processes.'' \emph{Electronic Communications in Probability [electronic only]}, vol.~6, pp. 107--114, 2001.

\bibitem{katori2004symmetry}
M.~Katori and H.~Tanemura, ``Symmetry of matrix-valued stochastic processes and noncolliding diffusion particle systems,'' \emph{Journal of mathematical physics}, vol.~45, no.~8, pp. 3058--3085, 2004.

\bibitem{rudin1976principles}
W.~Rudin, \emph{Principles of Mathematical Analysis}, 3rd~ed.\hskip 1em plus 0.5em minus 0.4em\relax McGraw-Hill Science, 1976.

\bibitem{fleming2005logarithmic}
W.~H. Fleming, ``Logarithmic transformations and stochastic control,'' in \emph{Advances in Filtering and Optimal Stochastic Control: Proceedings of the IFIP-WG 7/1 Working Conference Cocoyoc, Mexico, February 1--6, 1982}.\hskip 1em plus 0.5em minus 0.4em\relax Springer, 2005, pp. 131--141.

\bibitem{watson1922treatise}
G.~N. Watson, \emph{A treatise on the theory of Bessel functions}.\hskip 1em plus 0.5em minus 0.4em\relax The University Press, 1922, vol.~2.

\end{thebibliography}

\balance


\end{document}